\documentclass[11pt,oneside]{amsart}
\usepackage[latin9]{inputenc}
\usepackage{amsthm}
\usepackage{amssymb}
\PassOptionsToPackage{normalem}{ulem}
\usepackage{ulem}

\makeatletter
\numberwithin{equation}{section}
\numberwithin{figure}{section}
\theoremstyle{plain}
\newtheorem{thm}{\protect\theoremname}[section]
  \theoremstyle{definition}
  \newtheorem{defn}[thm]{\protect\definitionname}
  \theoremstyle{plain}
  \newtheorem{lem}[thm]{\protect\lemmaname}
  \theoremstyle{plain}
  \newtheorem{cor}[thm]{\protect\corollaryname}
  \theoremstyle{definition}
  \newtheorem{example}[thm]{\protect\examplename}
  \theoremstyle{plain}
  \newtheorem{fact}[thm]{\protect\factname}
  \theoremstyle{plain}
  \newtheorem{conjecture}[thm]{\protect\conjecturename}
  \theoremstyle{remark}
  \newtheorem{rem}[thm]{\protect\remarkname}
  \theoremstyle{plain}
  \newtheorem{prop}[thm]{\protect\propositionname}
  \theoremstyle{remark}
  \newtheorem{claim}[thm]{\protect\claimname}
  \theoremstyle{remark}
  \newtheorem*{claim*}{\protect\claimname}
  \theoremstyle{definition}
  \newtheorem{problem}[thm]{\protect\problemname}

\usepackage{amsfonts}
\usepackage{nicefrac}
\usepackage{amscd}
\usepackage{a4wide}
\linespread{1.5}
\usepackage{url}

\makeatother

  \providecommand{\claimname}{Claim}
  \providecommand{\conjecturename}{Conjecture}
  \providecommand{\corollaryname}{Corollary}
  \providecommand{\definitionname}{Definition}
  \providecommand{\examplename}{Example}
  \providecommand{\factname}{Fact}
  \providecommand{\lemmaname}{Lemma}
  \providecommand{\problemname}{Problem}
  \providecommand{\propositionname}{Proposition}
  \providecommand{\remarkname}{Remark}
\providecommand{\theoremname}{Theorem}

\begin{document}
\global\long\def\p{\mathbf{p}}
\global\long\def\q{\mathbf{q}}
\global\long\def\C{\mathfrak{C}}
\global\long\def\SS{\mathcal{P}}
 \global\long\def\pr{\operatorname{pr}}
\global\long\def\image{\operatorname{im}}
\global\long\def\otp{\operatorname{otp}}
\global\long\def\dec{\operatorname{dec}}
\global\long\def\suc{\operatorname{suc}}
\global\long\def\pre{\operatorname{pre}}
\global\long\def\qe{\operatorname{qf}}
 \global\long\def\ind{\operatorname{ind}}
\global\long\def\Nind{\operatorname{Nind}}
\global\long\def\lev{\operatorname{lev}}
\global\long\def\Suc{\operatorname{Suc}}
\global\long\def\HNind{\operatorname{HNind}}
\global\long\def\minb{{\lim}}
\global\long\def\concat{\frown}
\global\long\def\cl{\operatorname{cl}}
\global\long\def\tp{\operatorname{tp}}
\global\long\def\id{\operatorname{id}}
\global\long\def\cons{\left(\star\right)}
\global\long\def\qf{\operatorname{qf}}
\global\long\def\ai{\operatorname{ai}}
\global\long\def\dtp{\operatorname{dtp}}
\global\long\def\acl{\operatorname{acl}}
\global\long\def\nb{\operatorname{nb}}
\global\long\def\limb{{\lim}}
\global\long\def\leftexp#1#2{{\vphantom{#2}}^{#1}{#2}}
\global\long\def\intr{\operatorname{interval}}
\global\long\def\atom{\emph{at}}
\global\long\def\I{\mathfrak{I}}
\global\long\def\uf{\operatorname{uf}}
\global\long\def\ded{\operatorname{ded}}
\global\long\def\Ded{\operatorname{Ded}}
\global\long\def\Df{\operatorname{Df}}
\global\long\def\Th{\operatorname{Th}}
\global\long\def\eq{\operatorname{eq}}
\global\long\def\Aut{\operatorname{Aut}}
\global\long\def\ac{ac}
\global\long\def\DfOne{\operatorname{df}_{\operatorname{iso}}}
\global\long\def\modp#1{\pmod#1}
\global\long\def\sequence#1#2{\left\langle #1\left|\,#2\right.\right\rangle }
\global\long\def\set#1#2{\left\{  #1\left|\,#2\right.\right\}  }
\global\long\def\Diag{\operatorname{Diag}}
\global\long\def\Nn{\mathbb{N}}
\global\long\def\mathrela#1{\mathrel{#1}}
\global\long\def\twiddle{\mathord{\sim}}
\global\long\def\mathordi#1{\mathord{#1}}
\global\long\def\Qq{\mathbb{Q}}
\global\long\def\dense{\operatorname{dense}}
 \global\long\def\cof{\operatorname{cof}}
\global\long\def\tr{\operatorname{tr}}
\global\long\def\treeexp#1#2{#1^{\left\langle #2\right\rangle _{\tr}}}
\global\long\def\x{\times}
\global\long\def\forces{\Vdash}
\global\long\def\Vv{\mathbb{V}}
\global\long\def\Uu{\mathbb{U}}
\global\long\def\tauname{\dot{\tau}}
\global\long\def\ScottPsi{\Psi}
\global\long\def\cont{2^{\aleph_{0}}}
\global\long\def\MA#1{{MA}_{#1}}
\global\long\def\rank#1#2{R_{#1}\left(#2\right)}
\global\long\def\cal#1{\mathcal{#1}}

\def\Ind#1#2{#1\setbox0=\hbox{$#1x$}\kern\wd0\hbox to 0pt{\hss$#1\mid$\hss} \lower.9\ht0\hbox to 0pt{\hss$#1\smile$\hss}\kern\wd0} 
\def\Notind#1#2{#1\setbox0=\hbox{$#1x$}\kern\wd0\hbox to 0pt{\mathchardef \nn="3236\hss$#1\nn$\kern1.4\wd0\hss}\hbox to 0pt{\hss$#1\mid$\hss}\lower.9\ht0 \hbox to 0pt{\hss$#1\smile$\hss}\kern\wd0} 
\def\nind{\mathop{\mathpalette\Notind{}}}

\global\long\def\ind{\mathop{\mathpalette\Ind{}}}
 \global\long\def\nind{\mathop{\mathpalette\Notind{}}}
\global\long\def\average#1#2#3{Av_{#3}\left(#1/#2\right)}
\global\long\def\Ff{\mathfrak{F}}
\global\long\def\mx#1{Mx_{#1}}
\global\long\def\maps{\mathfrak{L}}

\global\long\def\Esat{E_{\mbox{sat}}}
\global\long\def\Ebnf{E_{\mbox{rep}}}
\global\long\def\Ecom{E_{\mbox{com}}}

\global\long\def\init{\trianglelefteq}
\global\long\def\fini{\trianglerighteq}
\global\long\def\K{{\bf K}_{\lambda,\theta}^{M,C,\bar{d}}}
\global\long\def\mxK{{\bf MxK}_{\lambda,\theta}^{M,C,\bar{d}}}
\global\long\def\sch#1#2{\Phi_{#1,#2}}
\global\long\def\xx{{\bf x}}
\global\long\def\yy{{\bf y}}
\global\long\def\zz{{\bf z}}
\global\long\def\aa{\mathfrak{a}}
\global\long\def\bb{\mathfrak{b}}

\title{THE GENERIC PAIR CONJECTURE FOR DEPENDENT FINITE DIAGRAMS}

\author{Itay Kaplan, Noa Lavi, Saharon Shelah}

\thanks{The first author would like to thank the Israel Science Foundation
for partial support of this research (Grant no. 1533/14). }

\thanks{The research leading to these results has received funding from the
European Research Council, ERC Grant Agreement n. 338821. No. 1055
on the third author's list of publications.}

\address{Itay Kaplan \\
The Hebrew University of Jerusalem\\
Einstein Institute of Mathematics \\
Edmond J. Safra Campus, Givat Ram\\
Jerusalem 91904, Israel}

\email{kaplan@math.huji.ac.il}

\urladdr{https://sites.google.com/site/itay80/ }

\address{Noa Lavi\\
The Hebrew University of Jerusalem\\
Einstein Institute of Mathematics \\
Edmond J. Safra Campus, Givat Ram\\
Jerusalem 91904, Israel}

\email{noa.lavi@mail.huji.ac.il}

\address{Saharon Shelah\\
The Hebrew University of Jerusalem\\
Einstein Institute of Mathematics \\
Edmond J. Safra Campus, Givat Ram\\
Jerusalem 91904, Israel}

\address{Saharon Shelah \\
Department of Mathematics\\
Hill Center-Busch Campus\\
Rutgers, The State University of New Jersey\\
110 Frelinghuysen Road\\
Piscataway, NJ 08854-8019 USA}

\email{shelah@math.huji.ac.il}

\urladdr{http://shelah.logic.at/}

\subjclass[2010]{03C45, 03C95, 03C48.}
\begin{abstract}
This paper generalizes Shelah's generic pair conjecture (now theorem)
for the measurable cardinal case from first order theories to finite
diagrams. We use homogeneous models in the place of saturated models. 
\end{abstract}

\maketitle

\section{Introduction}

The generic pair conjecture states that for every cardinal $\lambda$
such that $\lambda^{+}=2^{\lambda}$ and $\lambda^{<\lambda}=\lambda$,
a complete first order theory $T$ is dependent if and only if, whenever
$M$ is a saturated model whose size is $\lambda^{+}$, then, after
writing $M=\bigcup_{\alpha<\lambda^{+}}M_{\alpha}$ where $M_{\alpha}$
are models of size $\lambda$, there is a club of $\lambda^{+}$ such
that for every pair of ordinals $\alpha<\beta$ of cofinality $\lambda$
from the club, the pair of models $\left(M_{\beta},M_{\alpha}\right)$
has  the same isomorphism type. 

This conjecture is now proved for $\lambda$ large enough. The non-structure
side is proved in \cite{Sh877,Sh906} and the other direction is proved
in \cite{Sh:900,Sh950}, all by the third author. In \cite{Sh:900},
the theorem is proved for the case where $\lambda$ is measurable.
This is the easiest case of the theorem, and this is the case we will
focus on here. In \cite[Theorem 7.3]{Sh950}, the conjecture is proved
when $\lambda>\left|T\right|^{+}+\beth_{\omega}^{+}$. 

The current paper has two agendas. 

The first is to serve as an exposition for the proof of the theorem
in the case where $\lambda$ is measurable. There are already two
expositions by Pierre Simon on some other parts from \cite{Sh:900,Sh950},
which are available on his website%
\footnote{\url{http://www.normalesup.org/~simon/notes.html}%
}. 

The second is to generalize the structure side of this theorem in
the measurable cardinal case to finite diagrams. As an easy byproduct,
we also generalize a weak version of the ``recounting of types''
result \cite[Conclusion 3.13]{Sh950}, which states that when $\lambda$
is measurable and $M$ is saturated of cardinality $\lambda$, then
the number of types over $M$ up to conjugation is $\leq\lambda$.
See Corollary \ref{cor:main} below. 

A finite diagram $D$ is a collection of types in finitely many variables
over $\emptyset$ in some complete theory $T$. Once we fix such a
$D$ we concentrate on $D$-models, which are models of $T$ which
realize only types from $D$. For instance, in a theory with infinitely
many unary predicates $P_{i}$, $D$ could prohibit $x\notin P_{i}$
for all $i$, thus $D$-models are just union of the $P_{i}$'s. In
this context, saturated models become $D$-saturated models, which
is the same as being homogenous and realize $D$ (see Lemma \ref{lem:Grossberg}),
so our model $M$ will be $D$-saturated instead of saturated. 

We propose a definition for when a finite diagram $D$ is dependent.
This definition has the feature that if the underlying theory is dependent,
then so is $D$, so there are many examples of such diagrams. We also
give an example of an independent theory $T$ with some dependent
$D$ (Example \ref{exa:independent theory, dependent diagram}). 

The proof follows \cite{Sh:900} and also uses constructions from
\cite{Sh950}%
\footnote{Instead of ``strict decompositions'' from \cite{Sh:900} we use
$tK$ from \cite{Sh950}.%
}. However, In order to make the proof work, we will need the presence
of a strongly compact cardinal $\theta$ that will help us ensure
that the types we get are $D$-types and so realized in the $D$-saturated
models.

\subsubsection*{Organization of the paper}

In Section \ref{sec:preliminaries} we expose finite diagrams and
prove or cite all the facts we shall need about them and about measurable
and strongly compact cardinals. We also give a precise definition
of when a diagram $D$ is dependent, and prove several equivalent
formulations.

In Section \ref{sec:The-generic-pair} we state the generic pair conjecture
in the terminology of finite diagrams, and give a general framework
for proving it: we introduce \emph{decompositions} and \emph{good
families} and prove that if such things exist, then the theorem is
true. 

Section \ref{sec:The-type-decomposition} is devoted to proving that
nice decompositions exist. This is done in two steps. In Section \ref{sub:Tree-type-decomposition}
we construct the first kind of decomposition (\emph{tree-type decomposition}),
which is the building block of the decomposition constructed in Section
\ref{sub:Self-solvable-decomposition} (\emph{self-solvable decomposition}). 

In Section \ref{sec:Finding-a-good} we prove that the family of self-solvable
decompositions over a $D$-saturated model form a good family, and
deduce the generic pair conjecture.

\paragraph*{Acknowledgment}

We would like to thank the anonymous referee for his careful reading,
for his many useful comments and for finding several inaccuracies.

\section{\label{sec:preliminaries}preliminaries}

We start by giving the definition of homogeneous structures and of
$D$-models.
\begin{defn}
Let $M$ be some structure in some language $L$. We say that $M$
is\emph{ $\kappa$-homogeneous}%
\footnote{In some publications this notion is called $\kappa$-sequence homogenous,
but here we decided upon this simpler notation which is also standard,
see\emph{ \cite[page 480, 1.3]{Hod}.}%
} if:
\begin{itemize}
\item for every $A\subseteq M$ with $|A|<\kappa$, every partial elementary
map $f$ defined on $A$ and $a\in M$ there is some $b\in M$ such
that $f\cup\{(a,b)\}$ is an elementary map. 
\end{itemize}
We say that $M$ is \emph{homogeneous} if it is $\left|M\right|$-homogeneous. 
\end{defn}
Note that when $M$ is homogenous, it is also \emph{strongly homogeneous},
meaning that if $f$ is a partial elementary map with domain $A$
such that $\left|A\right|<\left|M\right|$, $f$ extends to an automorphism
of $M$. 

Fix a complete first order theory $T$ in a language $L$ with a monster
model $\C$ --- a saturated model containing all sets and models of
$T$, with cardinality $\bar{\kappa}=\bar{\kappa}^{<\bar{\kappa}}$
bigger than any set or model we will consider. 
\begin{defn}
For $A\subseteq\C$, let $D(A)=\set{\tp(\bar{a}/\emptyset)}{\bar{a}\subseteq A,|\bar{a}|<\omega}$.
A set $D$ of complete $L$-types over $\emptyset$ is a\emph{ finite
diagram in $T$} when it is of the form $D(A)$ for some $A$. If
$D$ is a finite diagram in $L$, then a set $B\subseteq\C$ is a
\emph{$D$-set} if $D\left(B\right)\subseteq D$. A model of $T$
which is a $D$-set is a \emph{$D$-model.}

Let $A\subseteq\C$ be a $D$-set. Let $p$ be a complete type over
$A$ (in any number of variables). We say that $p$ is a \emph{$D$-type}
if for every $\bar{c}$ realizing $p$, $A\cup\bar{c}$ is a $D$-set.
We denote the set of $D$-types over $A$ by $S_{D}\left(A\right)$
(and as usual we use superscript to denote the number of variables,
such as in $S_{D}^{<\omega}\left(A\right)$). We say that $M$ is
\emph{$\left(D,\kappa\right)$-saturated} if whenever $\left|A\right|<\kappa$,
every $p\in S_{D}^{1}\left(A\right)$ is realized in $M$. We say
that $M$ is \emph{$D$-saturated} if it is $\left(D,\left|M\right|\right)$-saturated. 
\end{defn}
Note that when $D$ is\emph{ trivial}, i.e., $D=\bigcup\set{D_{n}\left(T\right)}{n<\omega}$
(with $D_{n}\left(T\right)$ being the set of all complete $n$-types
over $\emptyset$), every model of $T$ is a $D$-model. 

The connection between $D$-saturation and homogeneity becomes clear
due to the following lemma.
\begin{lem}
\label{lem:Grossberg}\cite[Lemma 2.4]{grossbergLessmann} Let $D$
be a finite diagram. A $D$-model $M$ is $\left(D,\kappa\right)$-saturated
if and only if $D\left(M\right)=D$ and $M$ is $\kappa$-homogeneous. 
\end{lem}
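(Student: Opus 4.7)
The plan is to prove both directions of the biconditional separately, with the forward direction being essentially routine and the backward direction requiring more care.

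For the forward direction ($\Rightarrow$), assume $M$ is $(D,\kappa)$-saturated. The inclusion $D(M)\subseteq D$ is immediate since $M$ is a $D$-model; conversely, any $p\in D$ is a complete type in finitely many variables over $\emptyset$, which is a $D$-type over the $D$-set $\emptyset$ (of size $<\kappa$), so it is realized in $M$ by iteratively applying $(D,\kappa)$-saturation one variable at a time --- each step produces a new type in $S_{D}^{1}$ over an enlarged $D$-subset of $M$ of size $<\kappa$. For $\kappa$-homogeneity, given a partial elementary map $f\colon A\to M$ with $\left|A\right|<\kappa$ and $a\in M$, the pushforward of $\tp(a/A)$ along $f$ is a $1$-type over $f(A)\subseteq M$ with $|f(A)|<\kappa$; since $f$ is elementary and $A\cup\{a\}\subseteq M$ is a $D$-set, this pushforward is a $D$-type, hence realized in $M$ by some $b$ by $(D,\kappa)$-saturation, which witnesses the required extension of $f$.

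For the backward direction ($\Leftarrow$), assume $D(M)=D$ and $M$ is $\kappa$-homogeneous, and let $A\subseteq M$ with $\left|A\right|<\kappa$, $p\in S_{D}^{1}(A)$, with $b\in\mathfrak{C}$ realizing $p$. The key finitary observation is that for every finite tuple $\bar{a}_{0}\subseteq A$ the type $\tp(\bar{a}_{0},b/\emptyset)$ is a finite complete type lying in $D=D(M)$ (because $A\cup\{b\}$ is a $D$-set), so it is realized in $M$ by some pair $(\bar{c}_{0},b_{0})$. Since $\tp(\bar{c}_{0})=\tp(\bar{a}_{0})$ and both tuples lie in $M$, iterating $\kappa$-homogeneity finitely many times to extend the elementary map $\bar{c}_{0}\mapsto\bar{a}_{0}$ produces some $b'\in M$ with $\tp(\bar{a}_{0},b')=\tp(\bar{c}_{0},b_{0})=\tp(\bar{a}_{0},b)$, so $b'$ realizes $p\!\upharpoonright\!\bar{a}_{0}$. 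To upgrade this to a realization of the full $p$, I would enumerate $A=\sequence{a_{i}}{i<\mu}$ with $\mu<\kappa$ and proceed by transfinite induction on $\mu$; the inductive hypothesis provides realizations of each restriction $p\!\upharpoonright\!A_{\alpha}$ with $A_{\alpha}=\{a_{i}:i<\alpha\}$ for $\alpha<\mu$, and these are to be assembled via a back-and-forth construction of a partial elementary map $g\colon A\cup\{b\}\to M$ with $g\!\upharpoonright\!A=\id_{A}$, using $\kappa$-homogeneity at successor stages and taking unions at limits; then $g(b)\in M$ realizes $p$.

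The main obstacle is maintaining coherence of the back-and-forth at limit stages: at a successor stage $\alpha\mapsto\alpha+1$ we wish to extend the partial map by sending $a_{\alpha}$ to itself, which forces the current image of $b$ to already realize the larger restriction $p\!\upharpoonright\!A_{\alpha+1}$ rather than merely $p\!\upharpoonright\!A_{\alpha}$, so a naive extension may require replacing the image of $b$ at every successor, breaking the union at limits. The resolution is to interleave the back-and-forth with the inductive hypothesis, which delivers a suitable candidate image of $b$ at each stage, and to invoke $\kappa$-homogeneity (applied to partial elementary maps of $M$ of size still bounded by $\mu<\kappa$) to stitch the successive candidates into a single compatible chain whose union is the desired elementary map $g$; the finiteness of variable counts in $D$ is used only in the base case of this joint induction.
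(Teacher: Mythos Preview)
The paper does not actually prove this lemma; it merely cites \cite[Lemma 2.4]{grossbergLessmann}. So there is no ``paper's own proof'' to compare against, and the question is simply whether your argument is correct.

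Your forward direction is fine. The backward direction has the right ingredients --- induction on $\left|A\right|$, the observation that $D(M)=D$ handles the finite base case, and an appeal to $\kappa$-homogeneity --- but the way you propose to combine them does not work as written. You set out to build an increasing chain of elementary maps $g_{\alpha}\colon A_{\alpha}\cup\{b\}\to M$ with $g_{\alpha}\!\upharpoonright\! A_{\alpha}=\id_{A_{\alpha}}$. But once the chain is increasing, $g_{\alpha}(b)$ is fixed for all $\alpha$, so at the successor step you cannot ``replace the image of $b$'' with a better candidate; your ``stitching'' of the successive realizations $b_{\alpha}$ of $p\!\upharpoonright\! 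A_{\alpha}$ has no mechanism for producing a single coherent chain while keeping the identity on $A_{\alpha}$. The obstacle you identify is real, and the resolution you describe does not dissolve it.

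The standard fix is to drop the constraint $g_{\alpha}\!\upharpoonright\! A_{\alpha}=\id$ during the construction. First note that your inductive hypothesis (that $D$-types over subsets of $M$ of size $<\mu$ are realized in $M$) is equivalent, via $\kappa$-homogeneity, to: every partial elementary map from a $D$-set of size $<\mu$ into $M$ extends by one element. Using this reformulation, build an increasing chain $g_{\alpha}\colon A_{\alpha}\cup\{b\}\to M$ with no constraint on $g_{\alpha}\!\upharpoonright\! A_{\alpha}$; at stage $\alpha+1$ the domain $A_{\alpha}\cup\{b\}$ has size $<\mu$, so the inductive hypothesis lets you extend $g_{\alpha}$ to include $a_{\alpha}$ (sent to some element of $M$, not necessarily $a_{\alpha}$ itself). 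Take unions at limits. At the end you have an elementary $g\colon A\cup\{b\}\to M$; now apply $\kappa$-homogeneity once to the map $(g\!\upharpoonright\! A)^{-1}\colon g(A)\to A$ (both sides in $M$, size $<\kappa$), extending it to include $g(b)$. The image of $g(b)$ realizes $p$.
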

Just as in the first order case, we get the following.
\begin{cor}
\label{cor:homo-isom} Let $D$ be a finite diagram. If $M$, $N$
are $D$-saturated of the same cardinality, then $M\cong N$. Furthermore,
if $\lambda^{<\lambda}=\lambda$, and there is a $\left(D,\lambda\right)$-saturated
model, then there exists a $D$-saturated model of size $\lambda$. 
\end{cor}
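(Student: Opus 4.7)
The plan is to handle the two parts separately. For the uniqueness part, I would run a standard back-and-forth argument using $\lambda$-homogeneity (where $\lambda$ is the common cardinality of $M$ and $N$). Enumerate $M=\{m_{\alpha}:\alpha<\lambda\}$ and $N=\{n_{\alpha}:\alpha<\lambda\}$ and build a continuous increasing chain of partial elementary maps $f_{\alpha}\colon A_{\alpha}\to B_{\alpha}$ with $A_{\alpha}\subseteq M$, $B_{\alpha}\subseteq N$ and $|A_{\alpha}|<\lambda$. At stage $\alpha+1$, use $\lambda$-homogeneity of $N$ (given by Lemma \ref{lem:Grossberg}) to extend $f_{\alpha}$ so that $m_{\alpha}$ lies in the domain, and then $\lambda$-homogeneity of $M$ to extend so that $n_{\alpha}$ lies in the range. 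At limits take unions; since $\lambda^{<\lambda}=\lambda$ forces $\lambda$ to be regular (by König's theorem), $|A_{\alpha}|<\lambda$ is preserved. The union $\bigcup_{\alpha<\lambda}f_{\alpha}$ is the required isomorphism.

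For the existence part, fix a $(D,\lambda)$-saturated model $M^{*}$. The strategy is to build a continuous elementary chain $(M_{\alpha})_{\alpha\leq\lambda}$ of elementary substructures of $M^{*}$, each of cardinality $\leq\lambda$, closing up under the realization of $D$-types over small sets. Concretely, pick $M_{0}\prec M^{*}$ of size $\leq\lambda$ by downward Löwenheim--Skolem. Given $M_{\alpha}$, apply downward Löwenheim--Skolem again to find $M_{\alpha+1}\prec M^{*}$ of size $\leq\lambda$ containing $M_{\alpha}$ together with a realization (chosen inside $M^{*}$) of every $p\in S_{D}^{1}(A)$ for every $A\subseteq M_{\alpha}$ with $|A|<\lambda$. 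Such realizations exist because $M^{*}$ is $(D,\lambda)$-saturated, and there are at most $\lambda^{<\lambda}\cdot 2^{<\lambda}=\lambda$ such types (using $\lambda^{<\lambda}=\lambda$, which also gives $2^{<\lambda}\leq\lambda$). At limits take unions.

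Setting $M=M_{\lambda}$ gives a model of size $\lambda$ which is an elementary substructure of $M^{*}$, hence a $D$-model with $D(M)=D$. To see it is $(D,\lambda)$-saturated, let $A\subseteq M$ with $|A|<\lambda$ and $p\in S_{D}^{1}(A)$. By regularity of $\lambda$ there exists $\alpha<\lambda$ with $A\subseteq M_{\alpha}$, so $p$ is realized in $M_{\alpha+1}\subseteq M$. By Lemma \ref{lem:Grossberg}, $M$ is $\lambda$-homogeneous, and since $|M|=\lambda$ it is $D$-saturated.

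The main subtlety is the bookkeeping in the existence part: one must verify that $\lambda^{<\lambda}=\lambda$ simultaneously bounds the number of small subsets of $M_{\alpha}$, bounds the number of $D$-types over each such subset, and implies the regularity of $\lambda$ needed for the limit step $\alpha=\lambda$ to absorb arbitrary small $A\subseteq M_{\lambda}$ into some $M_{\alpha}$. In the uniqueness part, the only point requiring care is that the extension steps appeal to $\lambda$-homogeneity of the \emph{opposite} model, which by Lemma \ref{lem:Grossberg} is available because both models are $D$-saturated of the same cardinality $\lambda$.
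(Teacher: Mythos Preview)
Your argument is correct and is precisely the standard first-order argument adapted to $D$-saturation; the paper gives no proof beyond remarking that the corollary follows ``just as in the first order case.'' One correction worth noting: in the uniqueness half you invoke $\lambda^{<\lambda}=\lambda$ (and hence regularity of $\lambda$) to control $|A_{\alpha}|$ at limit stages, but that hypothesis belongs only to the second assertion of the corollary, not the first. No cardinal arithmetic is needed for the back-and-forth: at stage $\alpha$ you have added at most two elements per earlier step, so $|A_{\alpha}|\leq|\alpha|+\aleph_{0}<\lambda$ simply because $\lambda$ is an infinite cardinal and $\alpha<\lambda$. A second cosmetic point: the extension step is most cleanly phrased using $(D,\lambda)$-saturation of the target model directly (push forward the $D$-type $\tp(m_{\alpha}/A_{\alpha})$ via $f_{\alpha}$ and realize it in $N$), since the paper's definition of $\lambda$-homogeneity concerns partial elementary maps from a model to itself; Lemma~\ref{lem:Grossberg} makes the two formulations interchangeable, so this does not affect correctness.
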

The next natural thing, after obtaining this equivalence, would be
to look for monsters. A diagram $D$ is \emph{good} if for every $\lambda$
there exists a $(D,\lambda)$-saturated model (see \cite[Definition 2.1]{Sh003}).
We will assume throughout that $D$ is good. By Corollary \ref{cor:homo-isom},
as we assumed that $\bar{\kappa}^{<\bar{\kappa}}=\bar{\kappa}$, there
is a $D$-saturated model $\C_{D}\prec\C$ of cardinality $\bar{\kappa}$
--- the homogenous monster. From now on we make these assumptions
without mentioning them explicitly. 

Let us recall the general notion of an average type along an ultrafilter.
\begin{defn}
\label{def:avgtp} Let $A\subseteq\C_{D}$, $I$ some index set, $\bar{a}_{i}$
tuples of the same length for $i\in I$, and let $\cal U$ be an ultrafilter
on $I$. The \emph{average type} $\average{\sequence{\bar{a_{i}}}{i\in I}}A{\cal U}$
is the type consisting of all the formulas $\phi\left(\bar{x},\bar{c}\right)$
over $A$ such that $\set{i\in I}{\mathfrak{C}_{D}\models\phi\left(\bar{a}_{i},\bar{c}\right)}\in\cal U$. 
\end{defn}
When $\cal U$ is $\kappa$-complete, the average is $<\kappa$ satisfiable
in the sequence $\sequence{\bar{a}_{i}}{i\in I}$ (any $<\kappa$
many formulas are realized in the sequence). It follows that the average
type is a $D$-type (see below). 
\begin{lem}
\label{lem:average type along ultrafilter} Let $A,I$ be as in Definition
\ref{def:avgtp}, and let $\cal U$ be a $\kappa$-complete ultrafilter
on $I$, where $\kappa>\left|T\right|$. Then $r=\average{\sequence{\bar{a_{i}}}{i\in I}}A{\cal U}$
is a $D$-type.\end{lem}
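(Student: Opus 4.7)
The plan is to unwind the definition of $D$-type: given a realization $\bar{c}$ of $r$, we must show that $A \cup \bar{c}$ is a $D$-set, which by definition amounts to showing that for every finite subtuple $\bar{c}' \subseteq \bar{c}$ and every finite $\bar{a} \subseteq A$, the type $q := \tp(\bar{c}'\bar{a}/\emptyset)$ belongs to $D$. Since $\mathfrak{C}_D$ is $D$-saturated, Lemma \ref{lem:Grossberg} gives $D(\mathfrak{C}_D) = D$, so it will suffice to realize $q$ by some tuple already lying inside $\mathfrak{C}_D$.

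The natural candidate for such a realization is a suitable subtuple of one of the $\bar{a}_i$'s concatenated with $\bar{a}$. Writing $\bar{a}_i^{\prime}$ for the subtuple of $\bar{a}_i$ whose coordinates correspond to those of $\bar{c}'$, each formula $\psi(\bar{x}^{\prime}, \bar{y}) \in q$ yields a formula $\psi(\bar{x}^{\prime}, \bar{a})$ in the appropriate restriction of $r$, so by the definition of the average type,
\[
B_\psi \;:=\; \{\, i \in I : \mathfrak{C}_D \models \psi(\bar{a}_i^{\prime}, \bar{a}) \,\} \;\in\; \mathcal{U}.
\]

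Here the hypothesis $\kappa > |T|$ does the work: the number of formulas in $q$ is at most $|T| < \kappa$, so by $\kappa$-completeness of $\mathcal{U}$ the intersection $\bigcap_{\psi \in q} B_\psi$ still belongs to $\mathcal{U}$, hence is non-empty. Fixing any $i^*$ in this intersection, the tuple $\bar{a}_{i^*}^{\prime}\bar{a}$ lies in $\mathfrak{C}_D$ and realizes $q$, so $q \in D(\mathfrak{C}_D) = D$.

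There is no serious obstacle here; the argument is essentially a one-step application of $\kappa$-completeness to the $\leq |T|$-many formulas of $q$. The only thing to be careful about is the bookkeeping of subtuples: one must project each $\bar{a}_i$ onto the coordinates occupied by $\bar{c}'$ before reading off the average.
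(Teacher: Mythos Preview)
Your proof is correct and follows essentially the same approach as the paper's: reduce to showing that the type of any finite tuple from $\bar{c}\cup A$ is realized in $\C_{D}$, then use $\kappa$-completeness over the $\leq|T|$ formulas in that type to find a single index $i$ with $\bar{a}_{i}'\bar{a}\equiv\bar{c}'\bar{a}$. The only cosmetic differences are that the paper absorbs the bookkeeping by assuming $\bar{c}$ is already finite, and that you invoke Lemma~\ref{lem:Grossberg} for $D(\C_{D})=D$ where the bare inclusion $D(\C_{D})\subseteq D$ (immediate from $\C_{D}$ being a $D$-model) would suffice.
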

\begin{proof}
We must show that if $\bar{c}\models r$ (in $\C$), then $A\cup\bar{c}$
is a $D$-set. We may assume that $\bar{c}$ is a finite tuple (and
so are the tuples $\bar{a}_{i}$ for $i\in I$). It is enough to see
that if $\bar{c}\bar{a}$ is a finite tuple of elements from $\bar{c}\cup A$,
then for some $i\in I$, $\bar{a}_{i}\bar{a}\equiv\bar{c}\bar{a}$
(i.e., they have the same type over $\emptyset$). For each formula
$\varphi\left(\bar{x},\bar{a}\right)$ such that $\varphi\left(\bar{c},\bar{a}\right)$
holds, the set $\set{i\in I}{\C_{D}\models\varphi\left(\bar{a}_{i},\bar{a}\right)}\in\cal U.$
Since there are $\left|T\right|$ such formulas, by $\kappa$-completeness,
there is some $i\in I$ in the intersection of all these sets, so
we are done. 
\end{proof}
Now we turn to Hanf numbers. Let $\mu\left(\lambda,\kappa\right)$
be the first cardinal $\mu$ such that if $T_{0}$ is a theory of
size $\leq\lambda$, $\Gamma$ a set of finitary types in $T_{0}$
(over $\emptyset$) of cardinality $\leq\kappa$, and for every $\chi<\mu$
there is a model of $T_{0}$ of cardinality $\geq\chi$ omitting all
the types in $\Gamma$, then there is such a model in arbitrarily
large cardinality. Of course, when $\kappa=0$, $\mu\left(\lambda,\kappa\right)=\aleph_{0}$.
In our context, $T_{0}=T$, and $\Gamma=\bigcup\set{D_{n}\left(T\right)}{n<\omega}\backslash D$,
so we are interested in $\mu\left(\left|T\right|,\left|\Gamma\right|\right)$
which we will denote by $\mu\left(D\right)$, the \emph{Hanf number
}of $D$. In \cite[Chapter VII, 5]{Sh:c} this number is given an
upper bound: $\mu\left(D\right)\leq\beth_{\left(2^{\left|T\right|}\right)^{+}}$. 
\begin{defn}
A finite diagram $D$ has \emph{the independence property} if there
exists a formula $\phi\left(\bar{x},\bar{y}\right)$ which has it,
which means that there is an indiscernible sequence $\sequence{\bar{a}_{i}}{i<\mu\left(D\right)}$
and $\bar{b}$ in $\C_{D}$ such that $\C_{D}\models\phi(\bar{b},\bar{a}_{i})$
if and only if $i$ is even. Otherwise we say that $D$ is \emph{dependent}. 
\end{defn}
Of course, if the underlying theory $T$ is dependent, then $D$ is
dependent.
\begin{example}
\label{exa:independent theory, dependent diagram}Let $L=\left\{ R,P,Q\right\} $
where $P$ and $Q$ are unary predicates, and $R$ is a binary predicate.
Let $T$ be the model completion of the theory that states that $R\subseteq Q\x P$.
So $T$ is complete and has quantifier elimination. Let $L'=L\cup\set{c_{i}}{i<\omega}$
where $c_{i}$ are constants symbols, and let $T'$ be an expansion
of $T$ that says that $c_{i}\in P$ and $c_{i}\neq c_{j}$ for $i\neq j$.
So $T'$ is also complete and admits quantifier elimination. As $T$
has the independence property, so does $T'$.

Let $p\left(x\right)\in S^{1}\left(\emptyset\right)$ say that $x\in P$
and $x\neq c_{i}$ for all $i<\omega$. Finally, let $D$ be the finite
diagram $S^{<\omega}\left(\emptyset\right)\backslash\left\{ p\right\} $.
Easily $D$ is good (if $\C$ is a monster model of $T$, then let
$Q^{\C}\cup\set{c_{i}^{\C}}{i<\omega}$ be $\C_{D}$). It is easy
to see that $D$ is dependent.
\end{example}
Recall that a cardinal $\theta$ is \emph{strongly compact} if any
$\theta$-complete filter (with any domain) is contained in a $\theta$-complete
ultrafilter. For our context we will need to assume that if $D$ is
non-trivial, then there is a strongly compact cardinal $\theta>\left|T\right|$.
Strongly compact cardinals are measurable (see \cite[Corollary 4.2]{kanamori}).
Recall that a cardinal $\mu$ is \emph{measurable} if it is uncountable
and there is a $\mu$-complete non-principal ultrafilter on $\mu$.
It follows that there is a \emph{normal} such ultrafilter (i.e., closed
under diagonal intersection). See \cite[Exercise 5.12]{kanamori}.
Measurable cardinals are strongly inaccessible (see \cite[Theorem 2.8]{kanamori}),
which means that $\theta>\beth_{\left(2^{\left|T\right|}\right)^{+}}\geq\mu\left(D\right)$.
Fix some such $\theta$ throughout. If, however, $D$ is trivial,
then we do not need a strongly compact cardinal. 

We also note here a key fact about measurable cardinals that will
be useful later:
\begin{fact}
\label{fact:indiscernibles exist} \cite[Theorem 7.17]{kanamori}
Suppose that $\mu>\left|T\right|$ is a measurable cardinal and that
$\cal U$ is a normal (non-principal) ultrafilter on $\mu$. Suppose
that $\sequence{\bar{a}_{i}}{i<\mu}$ is a sequence of tuples in $\C$
of equal length $<\mu$, then for some set $X\in\cal U$, $\sequence{\bar{a}_{i}}{i\in X}$
is an indiscernible sequence. 
\end{fact}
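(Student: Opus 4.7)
The plan is to apply the partition calculus for normal ultrafilters on a measurable cardinal, once for each formula whose truth value needs to be stabilized along the sequence. Write $\alpha<\mu$ for the common length of the tuples $\bar{a}_i$. A subsequence $\sequence{\bar{a}_i}{i\in X}$ is indiscernible precisely when, for every $L$-formula $\varphi(\bar{y}_1,\ldots,\bar{y}_n)$ and every choice of finite index-tuples $\bar{\beta}_1,\ldots,\bar{\beta}_n$ from $\alpha$ (with $|\bar{\beta}_k|$ matching the arity of $\bar{y}_k$), the truth of $\varphi(\bar{a}_{i_1}{\upharpoonright}\bar{\beta}_1,\ldots,\bar{a}_{i_n}{\upharpoonright}\bar{\beta}_n)$ depends only on $\varphi$ and $\bar{\beta}$, not on the particular $i_1<\cdots<i_n$ drawn from $X$. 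So the task reduces to simultaneously homogenizing an indexed family of $2$-colorings of $[\mu]^n$ as $n$ varies.

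The main input is the standard partition property: whenever $\mu$ is measurable, $\cal U$ is a normal ultrafilter on $\mu$, and $f:[\mu]^n\to\lambda$ is a coloring with $\lambda<\mu$, there exists $X\in\cal U$ on which $f$ is constant. I would prove this by induction on $n$. The base case $n=1$ is immediate from $\mu$-completeness, since $\mu$ is partitioned into $\lambda<\mu$ pieces and one of them must lie in $\cal U$. For the inductive step, fix $f:[\mu]^{n+1}\to\lambda$ and, for each $i<\mu$, apply the inductive hypothesis to the ``fiber'' $f_i:[\mu\setminus(i+1)]^n\to\lambda$, $f_i(s)=f(\{i\}\cup s)$, obtaining a homogeneous $X_i\in\cal U$ with constant value $g(i)$. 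Normality of $\cal U$ is exactly what allows me to diagonally intersect, so $Y:=\triangle_{i<\mu}X_i\in\cal U$; a final application of the $n=1$ case to $g{\upharpoonright}Y$ produces a set on which $f$ is homogeneous.

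With the partition property in hand, for each pair $(\varphi,\bar{\beta})$ as above I define a $2$-coloring $c_{\varphi,\bar{\beta}}$ on $[\mu]^n$ according to whether $\varphi$ holds on the corresponding subtuples, and extract $X_{\varphi,\bar{\beta}}\in\cal U$ homogeneous for it. The number of such pairs is at most $|T|+|\alpha|^{<\omega}$, which is strictly below $\mu$ because $\mu$ is regular (indeed strongly inaccessible, being measurable). By $\mu$-completeness the intersection $X:=\bigcap_{\varphi,\bar{\beta}}X_{\varphi,\bar{\beta}}$ remains in $\cal U$, and by construction $\sequence{\bar{a}_i}{i\in X}$ is indiscernible.

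The only genuine obstacle is the partition property itself; once it is granted, the rest is bookkeeping. The one subtlety worth flagging is the length of the tuples: since $\alpha$ may be infinite (although $<\mu$), one must check that the family of colorings has size $<\mu$, which is where regularity/strong inaccessibility of $\mu$ enters.
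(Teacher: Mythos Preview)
Your argument is correct and is essentially the standard proof (this is Rowbottom's partition property for normal ultrafilters, proved by induction on $n$ using diagonal intersections, followed by $\mu$-completeness to intersect the $<\mu$ many homogeneous sets). Note, however, that the paper does not supply its own proof of this statement: it is recorded as a Fact with a citation to \cite[Theorem~7.17]{kanamori} and used as a black box, so there is no in-paper argument to compare against.

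One minor remark on your bookkeeping: the cardinality of the index set of colorings is $|T|+|\alpha|^{<\omega}$, and since $|\alpha|^{<\omega}=|\alpha|+\aleph_0$ for any $\alpha$, this is simply $|T|+|\alpha|+\aleph_0<\mu$; strong inaccessibility is not really needed here, regularity of $\mu$ together with $\mu>|T|$ and $\mu>\alpha$ suffices.
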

As a consequence (which will also be used later), we have the following.
\begin{cor}
\label{cor:full-indiscernibles}If $A=\bigcup_{i<\mu}A_{i}\subseteq\C$
is a continuous increasing union of sets where $\left|A_{i}\right|<\mu$,
$B\subseteq\C$ is some set of cardinality $<\mu$, and $\sequence{\bar{a}_{i}}{i<\mu}$,
$\cal U$ are as in Fact \ref{fact:indiscernibles exist} with $\bar{a}_{i}$
tuples from $A$, then for some set $X\in\cal U$, $\sequence{\bar{a}_{i}}{i\in X}$
is \textup{fully indiscernible over $B$} (with respect to $A$ and
$\sequence{A_{i}}{i<\mu}$), which means that for every $i\in X$
and $j<i$ in $X$, we have $\bar{a}_{j}\subseteq A_{i}$ , and $\sequence{\bar{a}_{j}}{i\leq j\in X}$
is indiscernible over $A_{i}\cup B$. \end{cor}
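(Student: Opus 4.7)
The plan is to derive both parts of the conclusion from two applications of normality of $\mathcal{U}$ — a club argument for the containment, and a diagonal intersection of indiscernibility sets for the indiscernibility — exploiting continuity of the filtration at limit ordinals to glue the local indiscernibility statements together automatically.

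Each $\bar{a}_i$ is a tuple of length $<\mu$ in $A = \bigcup_{i<\mu}A_i$, so by regularity of $\mu$ there is a function $f:\mu\to\mu$ with $\bar{a}_i\subseteq A_{f(i)}$. The closure-point set $C=\{i<\mu: f(j)<i \text{ for all } j<i\}$ is a club in $\mu$, hence in $\mathcal{U}$ by normality, and for $i\in C$ and $j<i$ in $C$ we get $\bar{a}_j\subseteq A_{f(j)}\subseteq A_i$. Separately, for each $i<\mu$ I expand the language with a constant for every element of $A_i\cup B$; the expansion has size $<\mu$ since $\mu$ is a strong limit above $|T|$, so Fact \ref{fact:indiscernibles exist} in the expansion provides $X_i\in\mathcal{U}$ with $\sequence{\bar{a}_j}{j\in X_i}$ indiscernible over $A_i\cup B$. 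Let $X = C\cap L\cap\triangle_{i<\mu}X_i$, where $L$ is the club of limit ordinals below $\mu$; by normality $X\in\mathcal{U}$.

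Condition (a) is immediate from $X\subseteq C$. For (b), fix $i\in X$ and any two increasing $n$-tuples of indices $k_1<\cdots<k_n$ and $k'_1<\cdots<k'_n$ from $X\cap[i,\mu)$. Each such index is $\geq i$ and lies in $X\subseteq\triangle_{i''<\mu}X_{i''}$, so whenever $i'<i$ the definition of diagonal intersection gives $k_l,k'_l\in X_{i'}$ for all $l$. The indiscernibility of $\sequence{\bar{a}_j}{j\in X_{i'}}$ over $A_{i'}\cup B$ then yields $\tp(\bar{a}_{k_1},\ldots,\bar{a}_{k_n}/A_{i'}\cup B)=\tp(\bar{a}_{k'_1},\ldots,\bar{a}_{k'_n}/A_{i'}\cup B)$. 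Since $i$ is a limit, continuity gives $A_i=\bigcup_{i'<i}A_{i'}$, so the parameters of any formula over $A_i\cup B$ lie in some $A_{i'}\cup B$ with $i'<i$, and the type equality lifts to $A_i\cup B$.

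The subtlety I expect — and the point at which a naive attempt would stumble — is handling tuples that begin at index $i$ on an equal footing with tuples drawn entirely from the strict tail $j>i$. One is tempted to add $\bar{a}_i$ itself to the parameter set when applying Fact, which would then demand a separate Fodor-style argument to force $\bar{a}_i$ to realize the correct average type over $A_i\cup B$. Instead, taking only $A_i\cup B$ as parameters and restricting to limit ordinals lets continuity perform this gluing automatically: the diagonal intersection places $i$ itself inside each $X_{i'}$ for $i'<i$, so $\bar{a}_i$ participates in each local indiscernibility, and continuity at $i$ lifts the resulting equalities to $A_i\cup B$.
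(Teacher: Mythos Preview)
Your proof is correct and follows essentially the same route as the paper: intersect a club witnessing the containment condition, the club of limit ordinals, and the diagonal intersection of the sets $X_i\in\mathcal{U}$ obtained from Fact~\ref{fact:indiscernibles exist} applied over $A_i\cup B$. Your write-up is in fact more explicit than the paper's about why the tail starting at $i$ (rather than strictly above $i$) is indiscernible over $A_i\cup B$, spelling out the continuity argument that the paper only alludes to in its final sentence.
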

\begin{proof}
This follows by the normality of the ultrafilter $\cal U$. First
note that if $E\subseteq\mu$ is a club then $E\in\cal U$ (why? Otherwise
$X=\mu\backslash E\in\cal U$, so the function $f:X\to\mu$ defined
by $\beta\mapsto\sup\left(\beta\cap E\right)$ is such that $f\left(\beta\right)<\beta$,
and by Fodor's lemma (which holds for normal ultrafilters), for some
$\gamma<\mu$ and $Y\subseteq X$ in $\cal U$, $f\upharpoonright Y=\gamma$
which easily leads to a contradiction). Hence the set $E=\set{i<\mu}{\forall j<i\left(\bar{a}_{j}\subseteq A_{i}\right)}$
is in $\cal U$. Furthermore, the set of limit ordinals $E'$ is also
in $\cal U$. The promised set $X$ is the intersection of $E\cap E'$
with the diagonal intersection of $X_{i}$ for $i<\mu$, where $X_{i}\in\cal U$
is such that $\sequence{\bar{a}_{i}}{i\in X_{i}}$ is indiscernible
over $A_{i}\cup B$ (which exists thanks to Fact \ref{fact:indiscernibles exist}).
Note that we have $\leq$ and not just $<$ when defining ``fully
indiscernible'', because $\sequence{A_{i}}{i<\mu}$ is continuous
and $X$ contains only limit ordinals. 
\end{proof}
The following demonstrates the need for Hanf numbers and strongly
compact cardinals.
\begin{lem}
\label{lem:eq formulations of dependence}For a finite diagram $D$
the following conditions are equivalent:
\begin{enumerate}
\item The formula $\phi\left(\bar{x},\bar{y}\right)$ has the independence
property.
\item For any $\lambda$ there is an indiscernible sequence $\sequence{\bar{a}_{i}}{i<\lambda}$
and $\bar{b}$ in $\C_{D}$ such that $\C_{D}\models\phi\left(\bar{a}_{i},\bar{b}\right)$
iff $i$ is even.
\item For any $\lambda$ there is a set $\set{\bar{a}_{i}}{i<\lambda}\subseteq\C_{D}$
such that for any $s\subseteq\lambda$ there is some $\bar{b}_{s}\in\C_{D}$
such that $\C_{D}\models\phi\left(\bar{a}_{i},\bar{b}_{s}\right)$
iff $i\in s$.
\item The same as (2) but with $\lambda=\theta$.
\item The same as (3) but with $\lambda=\theta$.
\end{enumerate}
\end{lem}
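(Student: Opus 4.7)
My plan is to close the cycle $(1) \Rightarrow (2) \Rightarrow (3) \Rightarrow (5) \Rightarrow (4) \Rightarrow (1)$, with the easy links being $(2) \Rightarrow (4)$ and $(3) \Rightarrow (5)$ (specializing $\lambda = \theta$) and $(4) \Rightarrow (1)$ (restricting to the first $\mu(D) \leq \theta$ terms, together with the standard fact that IP of $\phi(\bar x, \bar y)$ is preserved under transposing the variable partition).

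For $(1) \Rightarrow (2)$ I would apply the Hanf number to the paired sequence $\sequence{(\bar a_{2i},\bar a_{2i+1})}{i<\mu(D)}$, which is indiscernible in the language $L \cup \{\bar b\}$ and whose EM-template is realized inside $\C_D$ while omitting every type in $\Gamma = \bigcup_n D_n(T) \setminus D$. By the definition of $\mu(D)$ there are $D$-realizations of this template in every cardinality, and $D$-saturation of $\C_D$ pulls one of length $\lambda$ back inside; unfolding the pairs then yields the desired indiscernible $L$-sequence of length $\lambda\cdot 2$ together with the witness, the unfolding preserving $L$-indiscernibility because all within-pair and cross-pair tuple types coincide by the original $L$-indiscernibility.

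For the main step $(2) \Rightarrow (3)$ I would fix $\lambda$, apply (2), and for each $s \subseteq \lambda$ consider
\[ \Phi_s(\bar y) = \set{\phi(\bar a_i,\bar y)}{i\in s} \cup \set{\neg\phi(\bar a_i,\bar y)}{i\in \lambda\setminus s}. \]
Indiscernibility plus alternation makes $\Phi_s$ finitely satisfiable in $\C_D$: given $\{i_1 < \dots < i_n\}$, pick indices $\{j_1 < \dots < j_n\}$ of matching parities so that $\bar b$ witnesses the pattern on $\sequence{\bar a_{j_k}}{k<n}$, and transport via indiscernibility. To realize $\Phi_s$ \emph{inside} $\C_D$ I would invoke the strong compactness of $\theta$: extend the cone-filter on $[\Phi_s]^{<\omega}$ to a $\theta$-complete ultrafilter $\cal U$, choose a witness $\bar b_u \in \C_D$ for each $u$, and form the average $\average{\sequence{\bar b_u}{u}}{A}{\cal U}$ (where $A$ collects the parameters). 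By Lemma \ref{lem:average type along ultrafilter} (since $\theta > |T|$) this is a $D$-type realizing $\Phi_s$, so $D$-saturation of $\C_D$ produces $\bar b_s$.

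For $(5) \Rightarrow (4)$ I would use measurability: from a shattered $\sequence{\bar a_i}{i<\theta}$ and a normal ultrafilter on $\theta$, Fact \ref{fact:indiscernibles exist} extracts a size-$\theta$ set $X$ with $\sequence{\bar a_i}{i\in X}$ indiscernible; enumerating $X = \{i_j : j<\theta\}$ and applying shattering to $s = \{i_j : j \text{ even}\}$ delivers an alternating $\bar b \in \C_D$ for the reindexed sequence. The crux of the argument is $(2) \Rightarrow (3)$: first-order compactness would realize $\Phi_s$ somewhere in $\C$, but the point is to realize it in the homogeneous monster $\C_D$, which is exactly where the strongly compact cardinal together with Lemma \ref{lem:average type along ultrafilter} are essential.
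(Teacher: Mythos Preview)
Your cycle is sound in outline, but the step $(2)\Rightarrow(3)$ as written does not go through. You propose to extend the cone filter on $[\Phi_s]^{<\omega}$ to a $\theta$-complete ultrafilter via strong compactness and then average. But strong compactness only upgrades $\theta$-\emph{complete} filters, and the cone filter on finite subsets is not even $\aleph_1$-complete: if $\Phi_s$ is infinite, the intersection of the cones above countably many singletons is empty, so in fact no $\theta$-complete ultrafilter on $[\Phi_s]^{<\omega}$ can contain all cones. Consequently Lemma~\ref{lem:average type along ultrafilter} is unavailable and you cannot conclude that the average is a $D$-type. The repair is easy: your indiscernibility-and-transport argument already yields $<\theta$-satisfiability of $\Phi_s$ (for $S\subseteq\lambda$ with $|S|<\theta$, choose increasing $j_\alpha$ of the required parity for $\alpha<\otp(S)$ and invoke strong homogeneity of $\C_D$). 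You may then work with the cone filter on $[\lambda]^{<\theta}$, which \emph{is} $\theta$-complete, and proceed exactly as you intended.

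There is a smaller slip in $(1)\Rightarrow(2)$: the paired sequence $\sequence{(\bar a_{2i},\bar a_{2i+1})}{i<\mu(D)}$ is not in general indiscernible in $L\cup\{\bar b\}$; you are only given $L$-indiscernibility of $\sequence{\bar a_i}{i<\mu(D)}$. What the Hanf-number argument actually needs (and what survives the stretching) is $L$-indiscernibility of the pairs together with the first-order statement that $\phi(\bar b,\text{first coordinate})\wedge\neg\phi(\bar b,\text{second coordinate})$ holds of every pair. Indiscernibility over $\bar b$ is neither available nor required.

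For comparison, the paper runs the cycle as $(1)\Rightarrow(3)\Rightarrow(5)\Rightarrow(2)\Rightarrow(4)\Rightarrow(1)$. Its $(1)\Rightarrow(3)$ uses the Hanf number as you do, but then obtains every $\bar b_s$ at once by a pure homogeneity trick (shifting each $\bar a_i$ to its successor or not according to $s$, and transporting by an automorphism of $\C_D$), so no large-cardinal hypothesis is spent there. Strong compactness is instead used in $(5)\Rightarrow(2)$, precisely via the cone filter on $[\lambda]^{<\theta}$ --- the corrected version of your $(2)\Rightarrow(3)$. Once your filter is fixed, the two proofs are reshufflings of the same three ingredients: the Hanf number, Fact~\ref{fact:indiscernibles exist}, and the averaging lemma.
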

\begin{proof}
(1) $\Rightarrow$ (3): we may assume that $\lambda\geq\mu\left(D\right)$.
By assumption there is a sequence $\sequence{\bar{a}_{i}}{i<\mu\left(D\right)}$
and $\bar{b}$ in $\C_{D}$ as in the definition. Let $M\prec\C_{D}$
be a model of size $\mu\left(D\right)$ containing all these elements.
Add to the language $L$ new constants $\bar{c}$ in the length of
$\bar{b}$, a new predicate $P$ in the length of $\bar{x}$ and a
$2\lg\left(\bar{x}\right)$-ary symbol $<$, and a function symbol
$f$. Expand $M$ to $M'$, a structure of the expanded language,
by interpreting $\bar{c}^{M'}=\bar{b}$, $P^{M'}=\set{\bar{a}_{i}}{i<\mu\left(D\right)}$,
$\bar{a}_{i}<^{M'}\bar{a}_{j}$ iff $i<j$ and let $f^{M'}:P^{M'}\to M'$
be onto. 

Let $T_{0}=Th\left(M'\right)$. By assumption, $T_{0}$ has a $D$-model
of size $\mu\left(D\right)$, and so by definition $T_{0}$ has a
$D$-model $N'$ of cardinality $\lambda$ and we may assume that
its $L$-part $N$ is an elementary substructure of $\C_{D}$. So
the elements in $P^{N'}$, ordered by $<^{N'}$, form an $L$-indiscernible
sequence, and $\left|P^{N'}\right|=\lambda$. 

For convenience of notation, let $\left(I,<\right)$ be an order,
isomorphic to $\left(P^{N'},<^{N'}\right)$, and write $P^{N'}=\set{\bar{a}_{i}}{i\in I}$.
The order $<$ is discrete, so every $i\in I$ has a unique successor
$s\left(i\right)$, and $N\models\phi\left(\bar{c},\bar{a}_{i}\right)\leftrightarrow\neg\phi\left(\bar{c},\bar{a}_{s\left(i\right)}\right)$.
Let $Q=\set{i\in I}{N\models\phi\left(\bar{c},\bar{a}_{i}\right)}$,
so $\left|Q\right|=\lambda$. Then, by indiscernibility, for any
$R\subseteq Q$, 
\[
\sequence{\bar{a}_{i}}{i\in Q}\equiv\sequence{\bar{a}_{s^{R\left(i\right)}\left(i\right)}}{i\in Q}
\]
 where $R\left(i\right)=0$ iff $i\in R$, and $s^{0}=\id$, $s^{1}=s$.
Hence by the strong homogeneity of $\C_{D}$, $\set{\bar{a}_{i}}{i\in Q}$
satisfies (3). 

(2) $\Rightarrow$ (4), (3) $\Rightarrow$ (5), (4) $\Rightarrow$
(1): Obvious. 

(5) $\Rightarrow$ (2): We may assume that $\lambda\geq\theta$. Let
$\set{\bar{a}_{i}}{i<\theta}$ be as in (5). Since $\theta$ is measurable,
by Fact \ref{fact:indiscernibles exist}, we may assume that $\sequence{\bar{a}_{i}}{i<\theta}$
is indiscernible. By compactness we can extend this sequence to $\sequence{\bar{a}_{i}}{i<\lambda}$,
and let $A=\set{\bar{a}_{i}}{i<\lambda}$. Note that by indiscernibility,
the set containing all tuples in the new sequence is still a $D$-set,
so we may assume that this new sequence lies in $\C_{D}$. 

Let $O$ be the set of odd ordinals in $\lambda$. By indiscernibility
and homogeneity, for each $X\in\left[\lambda\right]^{<\theta}$ (i.e.,
$X\subseteq\lambda$, $\left|X\right|<\theta$) there is some $\bar{b}_{X}$
such that for all $i\in X$, $\C_{D}\models\phi\left(\bar{b}_{X},\bar{a}_{i}\right)$
iff $i\notin O$. By strong compactness, there is some $\theta$-complete
ultrafilter $\cal U$ on $\left[\lambda\right]^{<\theta}$ such that
for every $X\in I$ we have $\set{Y\in\left[\lambda\right]^{<\theta}}{X\subseteq Y}\in\cal U$.
Let $\bar{b}\models\average{\sequence{\bar{b}_{X}}{X\in\left[\lambda\right]^{<\theta}}}A{\cal U}$
which exists in $\C_{D}$ by Lemma \ref{lem:average type along ultrafilter},
then $\C_{D}\models\phi\left(\bar{b},\bar{a}_{i}\right)$ iff $i$
is even. 
\end{proof}
Dependence gives rise to the concept of the\emph{ }average type of
an indiscernible sequence, without resorting to ultrafilters. Let
$A\subseteq\C_{D}$, let $\alpha$ be an ordinal such that $\cof\left(\alpha\right)\geq\mu\left(D\right)$,
and let $\sequence{\bar{a}_{i}}{i<\alpha}$ be an indiscernible sequence
in $\C_{D}$. The \emph{average type} of $\sequence{\bar{a}_{i}}{i<\alpha}$
over $A$, denoted by $\average{\sequence{\bar{a}_{i}}{i<\alpha}}A{}$,
consists of formulas of the form $\phi\left(\bar{b},\bar{x}\right)$
with $\bar{b}\in A$, such that for some $i$, $\C_{D}\models\phi\left(\bar{b},\bar{a}_{j}\right)$
for every $j\ge i$. This is well defined as $\cof\left(\alpha\right)\geq\mu\left(D\right)$
(and as $D$ is dependent): otherwise, we can construct an increasing
unbounded sequence of ordinals $j_{i}<\alpha$, such that $\phi\left(\bar{b},\bar{a}_{j_{i}}\right)\leftrightarrow\neg\phi\left(\bar{b},\bar{a}_{j_{i+1}}\right)$,
and the length of this sequence is $\geq\mu\left(D\right)$. We show
that this type is indeed a $D$-type. 
\begin{lem}
\label{lem:average type on indiscernible sequence}Let $A\subseteq\C_{D}$
where $D$ is a dependent diagram, $\alpha$ an ordinal such that
$\cof\left(\alpha\right)\geq\mu\left(D\right)+\left|T\right|^{+}$,
and let $\sequence{\bar{a}_{i}}{i<\alpha}$ be an indiscernible sequence
in $\C_{D}$. The average type $r=\average{\sequence{\bar{a}_{i}}{i<\alpha}}A{}$
is a $D$-type. \end{lem}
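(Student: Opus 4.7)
The plan is to follow the same strategy used in the proof of Lemma \ref{lem:average type along ultrafilter}: show that any finite sub-tuple of $A\cup\bar{c}$, for $\bar{c}\models r$, realizes the same $\emptyset$-type as some finite sub-tuple from $\C_{D}$, and hence lies in $D$. The key replacement is that instead of leveraging $\kappa$-completeness of an ultrafilter, we exploit the cofinality hypothesis $\cof\left(\alpha\right)\geq\mu\left(D\right)+\left|T\right|^{+}$.

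More concretely, fix $\bar{c}\models r$. I would take an arbitrary finite sub-tuple of $A\cup\bar{c}$, written as $\bar{c}'\bar{a}$ with $\bar{c}'$ a finite sub-tuple of $\bar{c}$ (say at coordinates $u$) and $\bar{a}$ a finite tuple from $A$, and show $\tp\left(\bar{c}'\bar{a}/\emptyset\right)\in D$. For each $L$-formula $\phi\left(\bar{x}',\bar{y}\right)$ with $\C_{D}\models\phi\left(\bar{c}',\bar{a}\right)$, the formula $\phi\left(\bar{x}\upharpoonright u,\bar{a}\right)$ belongs to $r$ (here we use that $r$ is complete over $A$, which is exactly what the hypothesis $\cof\left(\alpha\right)\geq\mu\left(D\right)$ buys us, as discussed immediately before the lemma). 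By definition of the average type, there is some $i_{\phi}<\alpha$ such that $\C_{D}\models\phi\left(\bar{a}_{j}\upharpoonright u,\bar{a}\right)$ for every $j\geq i_{\phi}$.

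The set of such formulas $\phi$ has cardinality $\leq\left|T\right|$, so the supremum $i^{*}=\sup\set{i_{\phi}}{\phi\in\Phi}$ is strictly less than $\alpha$ because $\cof\left(\alpha\right)\geq\left|T\right|^{+}>\left|T\right|$. Then for any $j\geq i^{*}$, the tuple $\bar{a}_{j}\upharpoonright u\,\bar{a}$ satisfies every formula of $\tp\left(\bar{c}'\bar{a}/\emptyset\right)$; since types are maximal, this forces $\tp\left(\bar{a}_{j}\upharpoonright u\,\bar{a}/\emptyset\right)=\tp\left(\bar{c}'\bar{a}/\emptyset\right)$. Because $\bar{a}_{j},\bar{a}\in\C_{D}$ and $\C_{D}$ is a $D$-model, the left-hand type is in $D$, hence so is the right-hand side.

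There is no real obstacle here beyond bookkeeping; the proof is essentially a cofinality argument replacing the completeness-of-ultrafilter step in Lemma \ref{lem:average type along ultrafilter}. The only mild subtlety is that one must keep careful track of which coordinates of $\bar{c}$ correspond to which sub-tuple of $\bar{a}_{j}$, which is handled by the projection $\bar{a}_{j}\upharpoonright u$. The hypothesis $\cof\left(\alpha\right)\geq\mu\left(D\right)$ is used implicitly to guarantee that $r$ is well-defined and complete, while $\cof\left(\alpha\right)\geq\left|T\right|^{+}$ is used explicitly so that the sup of $\left|T\right|$-many ordinals below $\alpha$ remains below $\alpha$.
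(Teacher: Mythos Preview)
Your proposal is correct and follows essentially the same approach as the paper: the paper's proof is a one-paragraph sketch pointing back to Lemma~\ref{lem:average type along ultrafilter} and noting that the end-segment filter on $\alpha$ is $\cof(\alpha)$-complete, which is exactly the substitution you carry out in detail. Your handling of the coordinate restriction $\bar{a}_{j}\upharpoonright u$ and the explicit separation of the roles of $\cof(\alpha)\geq\mu(D)$ (well-definedness and completeness of $r$) versus $\cof(\alpha)\geq|T|^{+}$ (bounding the supremum) is more careful than the paper's own sketch.
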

\begin{proof}
The proof is similar to that of Lemma \ref{lem:average type along ultrafilter},
but here we use the fact that the end-segment filter on $\alpha$
is $\cof\left(\alpha\right)$-complete. The main point is that for
a formula $\varphi\left(\bar{x},\bar{a}\right)\in r$, there is some
$j<\alpha$ such that $\varphi\left(\bar{a}_{i},\bar{a}\right)$ holds
for all $i>j$. 
\end{proof}

\section{\label{sec:The-generic-pair}The generic pair conjecture}

From this section onwards, fix a dependent diagram $D$. We also
fix a strongly compact cardinal $\theta>\left|T\right|$. When $D$
is trivial, there is no need for strong compact cardinals, and one
can assume $\theta=\left|T\right|^{+}$, and replace $<\theta$ satisfiable
by finitely satisfiable. We leave it to the reader to find  the precise
replacement. 
\begin{conjecture}
\label{conj:The generic pair conjecture}(The generic pair conjecture)
Suppose $D$ is dependent. Assume $\theta<\lambda=\lambda^{<\lambda}$
and $\lambda^{+}=2^{\lambda}$. Let $\bar{M}=\langle M_{\alpha}:\alpha<\lambda^{+}\rangle$
be an increasing continuous sequence of elementary substructures of
$\C_{D}$ of cardinality $\lambda$, such that ${\bf M}=\bigcup_{\alpha<\lambda^{+}}M_{\alpha}$
is $D$-saturated of size $\lambda^{+}$. 

Then there exists a club $E\subseteq\lambda^{+}$ such that
\begin{itemize}
\item if $\alpha_{1}<\beta_{1},\alpha_{2}<\beta_{2}\in E$ are all of cofinality
$\lambda$, then $\left(M_{\beta_{1}},M_{\alpha_{1}}\right)\cong\left(M_{\beta_{2}},M_{\alpha_{2}}\right)$.
\end{itemize}
\end{conjecture}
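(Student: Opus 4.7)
The plan is to follow the roadmap set out in the introduction: develop a theory of \emph{decompositions} of $1$-types over the submodels $M_\alpha$, isolate a \emph{good family} of self-solvable decompositions, and then run a club-stabilization argument on $\lambda^+$. A decomposition of $\tp(a/M_\alpha)$ will be a well-founded tree of indiscernible sequences sitting inside $M_\alpha$ whose associated average types jointly recover $\tp(a/M_\alpha)$. The arithmetic is exactly why we work in $\C_D$: Lemma \ref{lem:average type along ultrafilter} (via the strongly compact $\theta$) and Lemma \ref{lem:average type on indiscernible sequence} (via dependence and $\mu(D)$) certify that these averages are $D$-types and hence realized in $\C_D$.

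The technical core is then in two steps. First I would construct a \emph{tree-type decomposition} of $a$ over $M_\alpha$ by starting from a $\theta$-complete ultrafilter extending the filter of sets in which $\tp(a/M_\alpha)$ is finitely satisfiable, and iteratively applying Corollary \ref{cor:full-indiscernibles} to thin realizing sequences to fully indiscernible ones; the strong compactness of $\theta$ is what keeps the averages inside $\C_D$ at every node of the tree. Iterating this construction while stabilizing it across $D$-saturated extensions $M'\succ M_\alpha$ --- so that the tree can be transferred rather than merely describing $a$ over $M_\alpha$ --- yields a \emph{self-solvable} decomposition $\mathbf{d}(a,\alpha)$. One then verifies that the family of self-solvable decompositions over $M_\alpha$ is good: existence is above; each decomposition is coded by $<\theta$-sized data over $M_\alpha$, so there are at most $\lambda^{<\theta}=\lambda$ isomorphism types; and matching decompositions lift to elementary maps between pairs via a back-and-forth using $(D,\lambda)$-saturation and Lemma \ref{lem:Grossberg}.

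With a good family in hand, the conjecture follows from a club-stabilization argument exploiting $\lambda^+=2^\lambda$ and the regularity of $\lambda^+$. Enumerating $\mathbf{M}=\{a_i:i<\lambda^+\}$ (which is possible since $\lambda^{<\lambda}=\lambda$) one attaches to each $\alpha<\lambda^+$ of cofinality $\lambda$ an invariant $\mathrm{inv}(\alpha)$ recording the isomorphism types of the decompositions $\mathbf{d}(a_i,\alpha)$ for $i<\alpha$, together with the gluing data; goodness guarantees that $\mathrm{inv}(\alpha)\cong\mathrm{inv}(\alpha')$ implies $(M_\beta,M_\alpha)\cong(M_{\beta'},M_{\alpha'})$ for any $\beta,\beta'$ of cofinality $\lambda$ lying in a suitable club. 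A pressing-down/Fodor argument on $\lambda^+$, together with the bound of $\leq \lambda$ possibilities per coordinate, then produces the desired club on which the pair is determined up to isomorphism. The main obstacle is the delicate balancing act in defining a decomposition --- coarse enough that only $\lambda$ isomorphism types occur, yet fine enough that equality of invariants forces isomorphism of pairs --- together with the requirement that every step of the construction remain inside $\C_D$, which is precisely where dependence, $\mu(D)$, and the strongly compact $\theta$ jointly intervene.
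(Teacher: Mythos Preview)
Your high-level architecture matches the paper's: build tree-type decompositions, upgrade to self-solvable ones, verify they form a good family, and deduce the pair result via a club argument. But several of your specifics are wrong in ways that matter, and the proposal as written would not go through.

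First, your description of a decomposition is not the paper's object. A $\lambda$-decomposition here is a tuple $\xx=(M,B,\bar d,\bar c,r)$ with $|B|<\lambda$ and $\bar c,\bar d\in\C_D^{<\lambda}$; ``tree-type'' means that $\tp(\bar c/M)$ does not split over $B$ and that for every $A\subseteq M$ of size $<\lambda$ some $\bar e_A\in M^{<\kappa}$ satisfies $\tp(\bar d/\bar e_A+\bar c)\vdash\tp(\bar d/A+\bar c)$. There is no well-founded tree of indiscernible sequences, and existence is \emph{not} obtained by thinning realizing sequences via Corollary~\ref{cor:full-indiscernibles}. It comes from the class $\K$ of Definition~\ref{def:K}: one builds a $<\kappa$-sequence of pairs $(\bar c_{i,0},\bar c_{i,1})$ with $\bar c_{i,0}\equiv_{MC\bar c_{<i}}\bar c_{i,1}$ but $\bar c_{i,0}\not\equiv_{MC\bar c_{<i}\bar d}\bar c_{i,1}$, and dependence (via Lemma~\ref{lem:eq formulations of dependence}(5)) forces the sequence to terminate. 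The self-solvable upgrade (Proposition~\ref{prop:finding self solvable by chasing tail}) is where Corollary~\ref{cor:full-indiscernibles} and measurability of $\lambda$ enter --- and note that the paper only proves the conjecture assuming $\lambda$ is measurable, which your proposal omits.

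Second, your size and counting claims are off. The data in a decomposition is $<\lambda$-sized, not $<\theta$-sized, so ``$\lambda^{<\theta}=\lambda$ isomorphism types'' is not the reason there are $\leq\lambda$ classes. The actual count (Corollary~\ref{cor:counting up to isomorphism}) goes through Theorem~\ref{thm:externally definable sets, homogeneous}: for $\xx\in\Ff$ the expanded structure $M_{[\xx]}$ is homogeneous, and two such structures with the same finite diagram are isomorphic; the number of possible diagrams is $\leq 2^{2^{|L'|}}<\lambda$ by inaccessibility of $\lambda$. The hard ingredient here is Proposition~\ref{prop:pseudo isomorphism implies weak isomorphism}, which turns a ``pseudo isomorphism'' (matching $B,\bar c,\bar d,r$ and the schema $\Phi_{\xx,B}$) into a genuine isomorphism; this, not a crude cardinality bound, is what makes the family good.

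Finally, the endgame in the paper is not a Fodor/pressing-down on invariants. It is a direct back-and-forth (Theorem~\ref{thm:generic pair with good family}): one intersects three clubs $\Esat\cap\Ecom\cap\Ebnf$, and for $\alpha_1<\beta_1$, $\alpha_2<\beta_2$ of cofinality $\lambda$ in the club one builds an increasing $\leq_{AP}$-chain of approximations $(\xx,\yy,h)$ with $h$ a weak isomorphism, using $\Ff$-completeness to absorb new $d\in M_{\beta_1}$ and $\Ff$-representativeness to realize the matched decomposition inside $M_{\beta_2}$. Your invariant scheme could perhaps be made to work, but it is not what is done, and you would still need the isomorphism-lifting content of Proposition~\ref{prop:pseudo isomorphism implies weak isomorphism} and the closure-under-unions of Theorem~\ref{thm:union}, neither of which your sketch supplies.
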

To give some motivation, note that it is easy to find a club $\Esat\subseteq\lambda^{+}$
such that for any $\delta\in\Esat$ of cofinality $\lambda$, $M_{\delta}$
is homogenous and $D\left(M_{\delta}\right)=D$ (equivalently $D$-saturated
by Lemma \ref{lem:Grossberg}). Just let $\Esat$ be the set of ordinals
$\delta<\lambda^{+}$ such that for any $\alpha<\delta$, every $p\in S_{D}^{1}\left(A\right)$
for any $A\subseteq M_{\alpha}$ of size $<\lambda$ is realized in
$M_{\delta}$. Then for any $\delta\in\Esat$ of cofinality $\lambda$,
$M_{\delta}$ is $D$-saturated, and any such two are isomorphic (see
Corollary \ref{cor:homo-isom}). 

In this section we will outline the proof of Conjecture \ref{conj:The generic pair conjecture}
under the assumption that a ``good family of decompositions'' exists.

We call a tuple of the form $\xx=\left(M_{\xx},B_{\xx},\bar{d}_{\xx},\bar{c}_{\xx},r_{\xx}\right)$
a \emph{$\lambda$-decomposition}%
\footnote{The idea behind the name ``decomposition'' will be clearer later,
where this notion is used to analyze the type of $\bar{d}$ over $M$. %
} when $\left|M_{\xx}\right|=\lambda$ and $M_{\xx}\subseteq\C$ is
a $D$-model, $B_{\xx}\subseteq M_{\xx}$ has cardinality $<\lambda$,
$\bar{c}_{\xx},\bar{d}_{\xx}\in\mathfrak{C}_{D}^{<\lambda}$ and $r_{\xx}\in S_{D}^{<\lambda}\left(\emptyset\right)$
is a complete type in variables $\left(\bar{x}_{\bar{c}_{\xx}},\bar{x}_{\bar{d}_{\xx}},\bar{x}_{\bar{c}_{\xx}}',\bar{x}_{\bar{d}_{\xx}}'\right)$
(where $\bar{x}_{\bar{d}_{\xx}},\bar{x}_{\bar{d}_{\xx}}'$ have the
same length as $\bar{d}_{\xx}$, etc.). 

An \emph{isomorphism} between two $\lambda$-decompositions $\xx$
and $\yy$ is just an elementary map with domain $M_{\xx}\cup\bigcup\bar{c}_{\xx}\cup\bigcup\bar{d}_{\xx}$
which maps all the ingredients of $\xx$ onto those of $\yy$, and
in particular, if $\xx\cong\yy$ then $r_{\xx}=r_{\yy}$. A \emph{weak
isomorphism} between $\xx$ and $\yy$ is a restriction of an isomorphism
to $\left(B_{\xx},\bar{d}_{\xx},\bar{c}_{\xx},r_{\xx}\right)$ (so
there exists some isomorphism extending it). We write $\xx\leq\yy$
when $M_{\xx}=M_{\yy}$, $B_{\xx}\subseteq B_{\yy}$, $r_{\xx}\subseteq r_{\yy}$
(i.e., $r_{\yy}$ may add more information on the added variables),
$\bar{c}_{\xx}\trianglelefteq\bar{c}_{\yy}$ (i.e., $\bar{c}_{\xx}$
is an initial segment of $\bar{c}_{\yy}$) and $\bar{d}_{\xx}\trianglelefteq\bar{d}_{\yy}$.
If $\xx$ and $\yy$ are $\lambda$-decompositions with $M_{\xx}=M_{\yy}$
such that for some $\zz$, $\zz\leq\xx,\yy$, we will say that they
are isomorphic over $\zz$ if there is an isomorphism from $\xx$
to $\yy$ fixing $\bar{d}_{\zz},\bar{c}_{\zz},B_{\zz}$. 
\begin{defn}
\label{def:good family}(A good family) A family $\Ff$ of $\lambda$-decompositions
is \emph{good} when: 
\begin{enumerate}
\item \label{enu:invariant under isom}The family $\Ff$ is invariant under
isomorphisms. 
\item \label{enu:every model is saturated}For every $\xx\in\Ff$, $M_{\xx}$
is $D$-saturated. 
\item \label{enu:non-empty}For every $D$-saturated $M\prec\C_{D}$ of
size $\lambda$, the ``trivial decomposition'' $\left(M,\emptyset,\emptyset,\emptyset,\emptyset\right)\in\mathfrak{F}$. 
\item \label{enu:enlarging}For every $\xx\in\mathfrak{F}$ and $\bar{d}\in\mathfrak{C}_{D}^{<\lambda}$
there exists some $\yy\in\mathfrak{F}$ such that $\xx\le\yy$, and
$\bar{d}_{\yy}\trianglerighteq\bar{d}_{\xx}\bar{d}$.
\item \label{enu:enbase}For every $\xx\in\mathfrak{F}$ and $b\in M_{\xx}$,
$\left(M_{\xx},B_{\xx}\cup\left\{ b\right\} ,\bar{d}_{\xx},\bar{c}_{\xx},r_{\xx}\right)\in\Ff$.
\item \label{enu:iso-extension}Suppose that $\xx_{1},\xx_{2},\yy_{1}\in\mathfrak{F}$
where $\xx_{1}\le\yy_{1}$ and there exists some isomorphism $f:\xx_{1}\to\xx_{2}$,
then there exists some $\yy_{2}\in\mathfrak{F}$ such that $\xx_{2}\le\yy_{2}$
and $f$ can be extended to an isomorphism $\yy_{1}\to\yy_{2}$. 
\item \label{enu:union}Suppose that $\sequence{\xx_{i}}{i<\delta}$ is
a sequence of $\lambda$-decompositions from $\mathfrak{F}$ such
that $\delta<\lambda$ is a limit ordinal and for every $i<j<\delta$
we have $\xx_{i}\le\xx_{j}$, then $\xx_{\delta}=\sup_{i<\delta}\xx_{i}=\left(M,\bigcup_{i<\delta}B_{\xx_{i}},\bigcup_{i<\delta}\bar{d}_{\xx_{i}},\bigcup_{i<\delta}\bar{c}_{\xx_{i}},\bigcup_{i<\delta}r_{\xx_{i}}\right)\in\mathfrak{F}$
. Note that as $\lambda$ is regular and $\delta<\lambda$ this makes
sense. 
\item \label{enu:isohomo} Suppose that $\sequence{\xx_{i}}{i<\delta}$
and $\sequence{\yy_{i}}{i<\delta}$ are increasing sequences of $\lambda$-decompositions
from $\mathfrak{F}$ such that $\delta<\lambda$ is a limit ordinal
and for each $i<\delta$ there is a weak isomorphism $g_{i}:\xx_{i}\to\yy_{i}$
such that $g_{i}\subseteq g_{j}$ whenever $i<j$. Then the union
$\bigcup_{i<\delta}g_{i}$ is a weak isomorphism from $\xx=\sup_{i<\delta}\xx_{i}$
to $\yy=\sup_{i<\delta}\yy_{i}$. 
\item \label{enu:count} For every $D$-model $M$ of cardinality $\lambda$,
the number of $\xx\in\Ff$ with $M_{\xx}=M$ up to isomorphism is
$\leq\lambda$. 
\end{enumerate}
\end{defn}
\begin{rem}
The roles of $\bar{c}_{\xx}$ and $r_{\xx}$ will become crucial in
the next sections. In this section it is important in order to restrict
the class of isomorphisms. 
\end{rem}

\begin{rem}
In Definition \ref{def:good family}, (\ref{enu:iso-extension}) follows
from (\ref{enu:invariant under isom}). 
\end{rem}

\begin{rem}
\label{rem:everything in M}Note that by point (\ref{enu:invariant under isom})
in Definition \ref{def:good family}, and as ${\bf M}$ is $D$-saturated
of cardinality $\lambda^{+}$, if $\Ff$ is good, then $\Ff$ is also
good when we restrict it to decompositions contained in ${\bf M}$
(rather than $\C_{D}$). More precisely, in points (\ref{enu:enlarging})
and (\ref{enu:iso-extension}), the promised decompositions $\yy$
and $\yy_{2}$ respectively can be found in ${\bf M}$ if the given
decompositions ($\xx,$ $\xx_{1}$, $\xx_{2}$ and $\yy_{1}$) are
in ${\bf M}$.
\end{rem}
Let us give an example of a ``baby application'' of the existence
of a good family before we delve into the generic pair conjecture.
This next theorem is a weak version of \cite[Conclusion 3.13]{Sh950}. 
\begin{thm}
\label{thm:clever counting of types}Suppose $\Ff$ is a good family.
Then, for a $D$-saturated model $M$ of size $\lambda$, the number
of types in $S_{D}^{<\lambda}\left(M\right)$ up to conjugation is
$\leq\lambda$. \end{thm}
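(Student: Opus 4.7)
Our plan is to use the good family $\Ff$ as a bookkeeping device: we associate to each type $p \in S_{D}^{<\lambda}\left(M\right)$ a decomposition in $\Ff$ whose $\bar{d}$-tuple begins with a realization of $p$, and then count such decompositions up to isomorphism using clause (\ref{enu:count}) of Definition \ref{def:good family}.

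Given $p \in S_{D}^{<\lambda}\left(M\right)$ we first realize it by some $\bar{d} \in \C_{D}^{<\lambda}$, which exists because $\C_{D}$ is $D$-saturated of cardinality $\bar{\kappa} > \lambda$. Clause (\ref{enu:non-empty}) places the trivial decomposition $\xx_{0} := \left(M,\emptyset,\emptyset,\emptyset,\emptyset\right)$ in $\Ff$, and clause (\ref{enu:enlarging}) then supplies some $\yy \in \Ff$ with $\xx_{0} \leq \yy$, $M_{\yy} = M$, and $\bar{d} \trianglelefteq \bar{d}_{\yy}$. Thus every $p$ will arise as the type over $M$ of some initial segment of $\bar{d}_{\yy}$ for some $\yy \in \Ff$ with $M_{\yy} = M$. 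Next we verify that two isomorphic decompositions with the same underlying model produce conjugate types: if $f : \yy' \to \yy''$ is an isomorphism of decompositions with $M_{\yy'} = M_{\yy''} = M$, then $f$ is an elementary map with domain of size $\lambda < \bar{\kappa}$, and $f \upharpoonright M$ is a self-bijection of $M$; by the strong homogeneity of $\C_{D}$ we extend $f$ to some $\hat{f} \in \Aut(\C_{D})$, and then $\hat{f}(M) = M$ and $\hat{f}$ takes each initial segment of $\bar{d}_{\yy'}$ to the corresponding initial segment of $\bar{d}_{\yy''}$, witnessing conjugation of the associated types over $M$.

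To conclude we fix a set of representatives $\{\yy_{i} : i < \lambda\}$ for the isomorphism classes of $\yy \in \Ff$ with $M_{\yy} = M$, of which there are at most $\lambda$ by clause (\ref{enu:count}). Each $\bar{d}_{\yy_{i}}$ has length $<\lambda$, hence admits at most $\lambda$ initial segments, each contributing one type over $M$. By the preceding paragraph every $p \in S_{D}^{<\lambda}\left(M\right)$ is conjugate to one of these $\leq \lambda \cdot \lambda = \lambda$ types, so $S_{D}^{<\lambda}\left(M\right)$ has at most $\lambda$ conjugacy classes. The main delicate point we anticipate is the passage from a decomposition isomorphism with common $M$-part to an automorphism of $\C_{D}$ fixing $M$ setwise — once strong homogeneity provides such an extension, the initial-segment bookkeeping and the cardinal arithmetic are routine.
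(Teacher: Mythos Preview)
Your proposal is correct and follows essentially the same approach as the paper: associate to each type a decomposition in $\Ff$ via clauses (\ref{enu:non-empty}) and (\ref{enu:enlarging}), observe that an isomorphism of decompositions over the same $M$ induces a conjugacy of the corresponding types (and their initial segments), and then bound the count via clause (\ref{enu:count}). The paper phrases this as a contradiction argument fixing a single length $\gamma<\lambda$, whereas you count directly over all lengths using the $\lambda\cdot\lambda=\lambda$ bookkeeping; your explicit extension of the decomposition isomorphism to an automorphism of $\C_{D}$ via strong homogeneity is exactly what the paper's ``obviously'' is hiding.
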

\begin{proof}
Suppose $\gamma<\lambda$, and $\sequence{p_{i}}{i<\lambda^{+}}$
is a sequence of types in $S_{D}^{\gamma}\left(M\right)$, which are
pairwise non-conjugate. Let $\bar{d}_{i}\models p_{i}$. By (\ref{enu:enlarging})
in Definition \ref{def:good family}, for some $\xx_{i}\in\Ff$, $\bar{d}_{i}\init\bar{d}_{\xx_{i}}$.
Obviously, for $i\neq j$, $\tp\left(\bar{d}_{\xx_{i}}/M\right)$
and $\tp\left(\bar{d}_{\xx_{j}}/M\right)$ are not conjugates. But
according to (\ref{enu:count}), this is impossible. \end{proof}
\begin{rem}
\label{rem:Upto Isomorphism over z}Suppose $\zz$ is a $\lambda$-decomposition.
From (\ref{enu:count}) in Definition \ref{def:good family} it follows
that the number of $\xx\in\Ff$ such that $\zz\leq\xx$ up to isomorphism
over $\zz$ is $\leq\lambda$. Indeed, if not there is a sequence
$\sequence{\xx_{i}}{i<\lambda^{+}}$ of $\lambda$-decompositions
in $\Ff$ containing $\zz$ which are pairwise not isomorphic over
$\zz$. By (\ref{enu:count}), we may assume that they are pairwise
isomorphic, and let $f_{i}:\xx_{i}\to\xx_{0}$ be isomorphisms. So
$f_{i}$ must fix $\bar{d}_{\zz}$ and $\bar{c}_{\zz}$ as they are
initial segments. In addition, $f_{i}\upharpoonright B_{\zz}$ is
a sequence of length $<\lambda$ of elements in $M_{\zz}$, and there
are $\lambda$ such sequences (as $\lambda^{<\lambda}=\lambda$),
so for some $i\neq j$, $f_{i}\upharpoonright B_{\zz}=f_{j}\upharpoonright B_{\zz}$.
Hence $f_{i}^{-1}\circ f_{j}\upharpoonright B_{\zz}=\id$ --- contradiction.
\end{rem}
For a decomposition $\xx$, we will write $\xx\Subset M$ for $M_{\xx}\subseteq M$
and $\left(\bar{c}_{\xx},\bar{d}_{\xx}\right)\in\left(M^{<\lambda}\right)^{2}$. 
\begin{defn}
\label{def:complete+bnf}Let $\gamma<\lambda^{+}$, and let $\mathfrak{F}$
be a good family of $\lambda$-decompositions.
\begin{enumerate}
\item We say that $\gamma$ is \emph{$\mathfrak{F}$-complete} if for every
$\alpha<\beta<\gamma$ such that $M_{\alpha}$ is $D$-saturated,
$\yy\in\Ff$ with $M_{\yy}=M_{\alpha}$ and $\bar{d}\in M_{\beta}^{<\lambda}$
such that $\yy\Subset M_{\beta}$, there exists some $\yy\leq\xx\in\Ff$
such that $\bar{d}_{\xx}\trianglerighteq\bar{d}\bar{d}_{\yy}$ and
$\xx\Subset M_{\gamma}$.
\item We say that $\gamma$ is \emph{$\Ff$-representative} if for every
$\alpha<\beta<\gamma$ such that $M_{\alpha}$ is $D$-saturated,
$\yy\in\Ff$ with $M_{\yy}=M_{\alpha}$ and every $\lambda$-decomposition
$\zz$ over $M_{\alpha}$ such that $\zz\Subset M_{\beta}$ and $\zz\leq\yy$,
there exists $\xx\in\Ff$ such that $M_{\xx}=M_{\alpha}$, $\xx\Subset M_{\gamma}$,
$\zz\leq\xx$ and $\xx$ is isomorphic to $\yy$ over $\zz$. 
\end{enumerate}
\end{defn}
\begin{prop}
\label{prop:F-complete club} Let $\mathfrak{F}$ be a family of good
$\lambda$-decompositions. Let $\Ecom\subseteq\lambda^{+}$ be the
set of all $\delta<\lambda^{+}$ which are $\mathfrak{F}$-complete.
Then $\Ecom$ is a club. \end{prop}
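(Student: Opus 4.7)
The plan is to verify separately that $\Ecom$ is closed and unbounded in $\lambda^+$.

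Closedness is direct: if $\delta=\sup_{i<\mu}\delta_i$ is a limit of elements of $\Ecom$ with $\delta<\lambda^+$, then any data $(\alpha,\beta,\yy,\bar{d})$ with $\alpha<\beta<\delta$ witnessing the $\Ff$-completeness demand at $\delta$ already satisfies $\beta<\delta_i$ for some $i$, and the $\Ff$-completeness of $\delta_i$ produces an appropriate $\xx\in\Ff$ with $\xx\Subset M_{\delta_i}\subseteq M_\delta$.

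For unboundedness, fix $\gamma_0<\lambda^+$, and I would build an increasing sequence $\sequence{\gamma_n}{n<\omega}$ in $\lambda^+$ with $\gamma_0<\gamma_1$, setting $\delta:=\sup_n\gamma_n$. At stage $n$, enumerate the set $T_n$ of ``obligations'' $(\alpha,\beta,\yy,\bar{d})$ satisfying the hypotheses of Definition \ref{def:complete+bnf}(1) with $\beta<\gamma_n$. For each such obligation, condition (\ref{enu:enlarging}) of Definition \ref{def:good family} together with Remark \ref{rem:everything in M} yields some $\yy\leq\xx\in\Ff$ with $\bar{d}_\xx\trianglerighteq\bar{d}\bar{d}_\yy$ and $\xx\Subset{\bf M}$; since $\xx$ has underlying cardinality $\leq\lambda$ and ${\bf M}=\bigcup_{\alpha<\lambda^+}M_\alpha$ is a continuous increasing union of length $\lambda^+$ (a regular cardinal), there is $\gamma(\xx)<\lambda^+$ with $\xx\Subset M_{\gamma(\xx)}$. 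I take $\gamma_{n+1}<\lambda^+$ to be the supremum of these $\gamma(\xx)$ together with $\gamma_n+1$. Provided $|T_n|\leq\lambda$, regularity of $\lambda^+$ forces $\gamma_{n+1}<\lambda^+$, hence $\delta<\lambda^+$ (as $\cof(\lambda^+)>\omega$); any pair $\alpha<\beta<\delta$ lies below some $\gamma_n$, so its obligation is resolved inside $M_{\gamma_{n+1}}\subseteq M_\delta$, giving $\delta\in\Ecom$ and $\delta>\gamma_0$.

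The main obstacle is therefore to bound $|T_n|\leq\lambda$. The counts of $\alpha,\beta$ are each $\leq|\gamma_n|\leq\lambda$, and $|M_\beta^{<\lambda}|=\lambda^{<\lambda}=\lambda$ bounds the choices of $\bar{d}$. For $\yy$: Definition \ref{def:good family}(\ref{enu:count}) gives $\leq\lambda$ isomorphism classes of $\yy\in\Ff$ with $M_\yy=M_\alpha$, and within any such class (where $r_\yy$ and the lengths of $B_\yy,\bar{c}_\yy,\bar{d}_\yy$ are determined) the constraint $\yy\Subset M_\beta$ confines $(B_\yy,\bar{c}_\yy,\bar{d}_\yy)$ to $M_\alpha^{<\lambda}\times M_\beta^{<\lambda}\times M_\beta^{<\lambda}$, a set of size $\lambda^{<\lambda}=\lambda$. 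Multiplying gives $|T_n|\leq\lambda$, as required.
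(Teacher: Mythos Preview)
Your proof is correct and follows essentially the same approach as the paper: show closedness directly, then for unboundedness build an $\omega$-chain $\sequence{\gamma_n}{n<\omega}$ where $\gamma_{n+1}$ handles all obligations with $\beta<\gamma_n$, using clause~(\ref{enu:enlarging}) and Remark~\ref{rem:everything in M}, with the key point being that there are only $\leq\lambda$ obligations at each stage.

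The one minor difference is in how you count the $\yy$'s. You invoke clause~(\ref{enu:count}) to bound the isomorphism classes, then count within each class. The paper instead counts directly: since a $\lambda$-decomposition $\yy$ with $M_\yy=M_\alpha$ and $\yy\Subset M_\beta$ is determined by $(B_\yy,\bar{c}_\yy,\bar{d}_\yy,r_\yy)$, and since $r_\yy$ is a complete type over $\emptyset$ in $<\lambda$ variables (hence a subset of a set of formulas of size $<\lambda$, so there are at most $2^{<\lambda}\leq\lambda^{<\lambda}=\lambda$ possibilities for $r_\yy$), the total number of $\yy$'s is $\leq\lambda$ without any appeal to~(\ref{enu:count}). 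Your route works, but the paper's is slightly more elementary and avoids using a clause that is in fact the deepest part of the good-family definition.
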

\begin{proof}
The fact that $\Ecom$ is a closed is easy. Suppose $\beta<\lambda^{+}$.
Let $\beta<\beta'<\lambda^{+}$ be such that for every $\alpha<\beta$
such that $M_{\alpha}$ is $D$-saturated, and every $\bar{d}\in M_{\beta}^{<\lambda}$
and $\yy\in\Ff$ with $\yy\Subset M_{\beta}$ and $M_{\yy}=M_{\alpha}$,
there is some $\yy\leq\xx\in\Ff$ such that $\bar{d}_{\xx}\fini\bar{d}_{\yy}\bar{d}$,
$M_{\xx}=M_{\alpha}$ and $\xx\Subset M_{\beta'}$. The ordinal $\beta'$
exists because $\lambda^{<\lambda}=\lambda$ (so the number of $\yy$'s
and the number of $\bar{d}$'s is $\leq\lambda$), by (\ref{enu:enlarging})
of Definition \ref{def:good family} and by Remark \ref{rem:everything in M}.
By induction, we can thus define an increasing sequence of ordinals
$\beta_{i}$ for $i<\omega$ where $\beta_{0}=\beta$ and $\beta_{i+1}=\beta_{i}'$.
Finally, $\gamma=\beta_{\omega}\in\Ecom$. \end{proof}
\begin{prop}
\label{prop:BnF club}Let $\Ff$ be a family of good $\lambda$-decompositions.
Let $\Ebnf\subseteq\lambda^{+}$ be the set of all $\delta<\lambda^{+}$which
are $\Ff$-representative. Then $\Ebnf$ is a club.\end{prop}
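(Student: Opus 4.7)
The plan is to imitate the proof of Proposition \ref{prop:F-complete club}, with the enlargement tuple $\bar{d}$ there replaced by the pair $(\yy,\zz)$.

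Closedness is immediate: if $\delta<\lambda^{+}$ is a limit of ordinals in $\Ebnf$, and we are given $\alpha<\beta<\delta$ with $M_{\alpha}$ $D$-saturated, $\yy\in\Ff$ with $M_{\yy}=M_{\alpha}$, and $\zz\leq\yy$ with $\zz\Subset M_{\beta}$, then some $\gamma'\in\Ebnf$ with $\alpha,\beta<\gamma'<\delta$ exists, and $\Ff$-representativity of $\gamma'$ supplies the required $\xx\Subset M_{\gamma'}\subseteq M_{\delta}$.

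For unboundedness, given $\beta<\lambda^{+}$, I construct an increasing sequence $\beta_{0}=\beta<\beta_{1}<\cdots$ of length $\omega$ below $\lambda^{+}$, where each $\beta_{i+1}$ is chosen so that: for every $\alpha<\beta_{i}$ with $M_{\alpha}$ $D$-saturated, every $\yy\in\Ff$ with $M_{\yy}=M_{\alpha}$, and every $\zz\leq\yy$ with $\zz\Subset M_{\beta_{i}}$, there is $\xx\in\Ff$ with $M_{\xx}=M_{\alpha}$, $\xx\Subset M_{\beta_{i+1}}$, $\zz\leq\xx$, and $\xx\cong\yy$ over $\zz$. Then $\gamma:=\sup_{i<\omega}\beta_{i}\in\Ebnf$: for any $\alpha<\beta_{\ast}<\gamma$ as in the definition of $\Ff$-representative, one finds $i$ with $\alpha,\beta_{\ast}<\beta_{i}$ and applies the property of $\beta_{i+1}$.

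The crux is to show such a $\beta_{i+1}$ exists below $\lambda^{+}$. First, the number of relevant $\zz$'s is at most $\lambda$: there are $\leq\lambda$ choices of $\alpha<\beta_{i}$; for each, the subset $B_{\zz}\subseteq M_{\alpha}$ of size $<\lambda$ and the tuples $\bar{c}_{\zz},\bar{d}_{\zz}\in M_{\beta_{i}}^{<\lambda}$ each yield $\lambda^{<\lambda}=\lambda$ choices; and since we only need to handle $\zz$ with $\zz\leq\yy$ for some $\yy\in\Ff$, the type $r_{\zz}$ is the restriction of $r_{\yy}$ to the variables of $\zz$, and Definition \ref{def:good family}(\ref{enu:count}) bounds the isomorphism classes of such $\yy$ (hence the possibilities for $r_{\zz}$) by $\lambda$. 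Second, for each such $\zz$, Remark \ref{rem:Upto Isomorphism over z} gives $\leq\lambda$ isomorphism classes over $\zz$ of $\yy\in\Ff$ with $\zz\leq\yy$; two $\yy$'s isomorphic over $\zz$ can be handled by a single $\xx$ by composing isomorphisms. So there are $\leq\lambda$ pairs $(\zz,[\yy])$ to handle. For each pair I produce one $\xx\in\Ff$ inside ${\bf M}$: by $D$-saturation of ${\bf M}$, realize $\tp(\bar{c}_{\yy}\bar{d}_{\yy}/M_{\alpha}\cup\bar{c}_{\zz}\cup\bar{d}_{\zz})$ inside ${\bf M}$ by some tuple $(\bar{c}_{\xx},\bar{d}_{\xx})$, and set $\xx:=(M_{\alpha},B_{\yy},\bar{d}_{\xx},\bar{c}_{\xx},r_{\yy})$. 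Invariance of $\Ff$ under isomorphism (Definition \ref{def:good family}(\ref{enu:invariant under isom})) places $\xx$ in $\Ff$, and the conditions $M_{\xx}=M_{\alpha}$, $\zz\leq\xx$, and $\xx\cong\yy$ over $\zz$ (via the identity on $M_{\alpha}$ together with $\bar{c}_{\yy}\bar{d}_{\yy}\mapsto\bar{c}_{\xx}\bar{d}_{\xx}$) are immediate. Each such $\xx$ has size $\lambda$ and lies in ${\bf M}$, so the union of the $\leq\lambda$ chosen $\xx$'s lies in $M_{\beta_{i+1}}$ for some $\beta_{i+1}<\lambda^{+}$, by $\cof(\lambda^{+})=\lambda^{+}>\lambda$. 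The main obstacle is exactly this careful accounting of triples up to the correct equivalence, where both $\lambda^{<\lambda}=\lambda$ and Remark \ref{rem:Upto Isomorphism over z} are essential.
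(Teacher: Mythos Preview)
Your proof is correct and follows the approach the paper intends: it mimics Proposition~\ref{prop:F-complete club}, replacing the use of clause~(\ref{enu:enlarging}) by Remark~\ref{rem:Upto Isomorphism over z} to bound the number of isomorphism classes over each $\zz$, and then produces the witness $\xx$ inside ${\bf M}$ via saturation and clause~(\ref{enu:invariant under isom}) exactly as in the spirit of Remark~\ref{rem:everything in M}. One minor simplification: your detour through clause~(\ref{enu:count}) to bound the number of possible $r_{\zz}$'s is unnecessary, since $r_{\zz}$ is a complete type over $\emptyset$ in $<\lambda$ variables and there are at most $2^{<\lambda}=\lambda$ such types directly.
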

\begin{proof}
The proof is similar to the proof of Proposition \ref{prop:F-complete club},
but now in order to show that $\Ebnf$ is unbounded, we use Remark
\ref{rem:Upto Isomorphism over z}.\end{proof}
\begin{thm}
\label{thm:generic pair with good family}Suppose $\Ff$ is a good
family. Let $E=\Esat\cap\Ebnf\cap\Ecom\subseteq\lambda^{+}$. This
is a club. For every $\alpha_{1}<\beta{}_{1},\alpha_{2}<\beta_{2}\in E$
of cofinality $\lambda$ we have $\left(M_{\beta_{1}},M_{\alpha_{1}}\right)\cong\left(M_{\beta_{2}},M_{\alpha_{2}}\right)$
. Hence Conjecture \ref{conj:The generic pair conjecture} holds. \end{thm}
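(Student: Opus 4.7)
First, $E$ is a club: $\Esat$ is a club by the discussion following Conjecture \ref{conj:The generic pair conjecture}, while $\Ecom$ and $\Ebnf$ are clubs by Propositions \ref{prop:F-complete club} and \ref{prop:BnF club}. So only the isomorphism of pairs needs proof. Fix $\alpha_1<\beta_1$ and $\alpha_2<\beta_2$ in $E$ of cofinality $\lambda$; since $\alpha_k,\beta_k\in\Esat$, all four models $M_{\alpha_k},M_{\beta_k}$ are $D$-saturated of size $\lambda$.

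My plan is a back-and-forth construction of length $\lambda$ inside the good family $\Ff$. Fix enumerations $M_{\beta_1}=\{a_i:i<\lambda\}$ and $M_{\beta_2}=\{b_i:i<\lambda\}$, and build by induction on $i\le\lambda$ increasing sequences of $\lambda$-decompositions $\xx_i^k\in\Ff$ ($k=1,2$) with $M_{\xx_i^k}=M_{\alpha_k}$ and $\xx_i^k\Subset M_{\beta_k}$, together with an increasing sequence of weak isomorphisms $g_i:\xx_i^1\to\xx_i^2$. The base case is furnished by (\ref{enu:non-empty}) and $g_0=\emptyset$. At a limit $\delta<\lambda$, property (\ref{enu:union}) delivers $\xx_\delta^k:=\sup_{i<\delta}\xx_i^k\in\Ff$ (regularity of $\lambda$ keeps the combined ingredients of length $<\lambda$, so $\xx_\delta^k\Subset M_{\beta_k}$), while (\ref{enu:isohomo}) makes $g_\delta:=\bigcup_{i<\delta}g_i$ a weak isomorphism from $\xx_\delta^1$ to $\xx_\delta^2$.

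For the successor step $i+1$ I handle both $a:=a_i\in M_{\beta_1}$ and $b:=b_i\in M_{\beta_2}$. Consider the ``forth'' half, on $a$. Pick any full isomorphism $f_i\supseteq g_i$ of the decompositions $\xx_i^1$ and $\xx_i^2$. Applying $\Ecom$ at $\gamma=\beta_1$ (with $\alpha=\alpha_1$ and some $\beta<\beta_1$ large enough that $\xx_i^1\Subset M_\beta$ and $a\in M_\beta$, which exists because $\cof(\beta_1)=\lambda$ and the relevant tuples have length $<\lambda$), I extend $\xx_i^1$ to some $\xx_{i+1}^1\in\Ff$ with $a\in\bar{d}_{\xx_{i+1}^1}$ and $\xx_{i+1}^1\Subset M_{\beta_1}$. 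Property (\ref{enu:iso-extension}) applied to $f_i$ produces $\xx^{2,\ast}\in\Ff$ with $\xx_i^2\le\xx^{2,\ast}$ and an isomorphism $f':\xx_{i+1}^1\to\xx^{2,\ast}$ extending $f_i$, though $\xx^{2,\ast}$ need not lie in $M_{\beta_2}$. Now $\Ebnf$ at $\gamma=\beta_2$ applied with $\yy=\xx^{2,\ast}$ and $\zz=\xx_i^2$ yields $\xx_{i+1}^2\in\Ff$ with $\xx_i^2\le\xx_{i+1}^2\Subset M_{\beta_2}$ together with an isomorphism $h:\xx^{2,\ast}\to\xx_{i+1}^2$ fixing $B_{\xx_i^2}\cup\bar{c}_{\xx_i^2}\cup\bar{d}_{\xx_i^2}$ pointwise. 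Define $g_{i+1}$ to be the restriction of $h\circ f'$ to $B_{\xx_{i+1}^1}\cup\bar{c}_{\xx_{i+1}^1}\cup\bar{d}_{\xx_{i+1}^1}$; since $h$ fixes the image of $g_i$, we have $g_{i+1}\supseteq g_i$. The ``back'' half for $b$ is entirely symmetric.

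Finally, $F:=\bigcup_{i<\lambda}g_i$ is a union of partial elementary maps and hence elementary; the enumerations force its domain to be $M_{\beta_1}$ and its image $M_{\beta_2}$. Moreover, each $f'$ in the construction maps $M_{\xx_{i+1}^1}=M_{\alpha_1}$ onto $M_{\xx^{2,\ast}}=M_{\alpha_2}$, and each $h$ restricts to an automorphism of $M_{\alpha_2}$, so $g_{i+1}(a)\in M_{\alpha_2}$ whenever $a\in M_{\alpha_1}$ is added to the domain; by symmetry $F(M_{\alpha_1})=M_{\alpha_2}$, giving the desired $(M_{\beta_1},M_{\alpha_1})\cong(M_{\beta_2},M_{\alpha_2})$. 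I expect the only real obstacle to be keeping each $\xx_i^k$ inside $M_{\beta_k}$ throughout the induction, which is exactly what $\Ebnf$ is designed for: it relocates the decomposition produced by the external (\ref{enu:iso-extension}) back into $M_{\beta_k}$ without disturbing anything previously built.
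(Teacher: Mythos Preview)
Your proof is correct and follows essentially the same back-and-forth strategy as the paper: use $\Ecom$ to extend a decomposition inside $M_{\beta_k}$, property (\ref{enu:iso-extension}) to transport the extension to the other side, and $\Ebnf$ to relocate it back into $M_{\beta_{3-k}}$, with (\ref{enu:union}) and (\ref{enu:isohomo}) handling limits. The one noteworthy difference is that you feed \emph{every} element of $M_{\beta_k}$ into $\bar d$ and then recover $F(M_{\alpha_1})=M_{\alpha_2}$ from the fact that any decomposition-isomorphism sends $M_{\alpha_1}$ onto $M_{\alpha_2}$; the paper instead routes elements of $M_{\alpha_k}$ separately into $B$ via property (\ref{enu:enbase}), which makes the preservation of the inner model immediate but uses one more clause of Definition~\ref{def:good family}. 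Your variant is a mild streamlining and is fully valid.
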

\begin{proof}
Let $AP$%
\footnote{AP stands for approximations.%
} be the collection of tuples of the form $p=\left(\xx_{p},\yy_{p},h_{p}\right)=\left(\xx,\yy,h\right)$
where $\xx,\yy\in\Ff$ and $h:\xx\to\yy$ is a weak isomorphism, such
that $M_{\xx}=M_{\alpha_{1}}$, $\xx\Subset M_{\beta_{1}}$, $M_{\yy}=M_{\alpha_{2}}$
and $\yy\Subset M_{\beta_{2}}$. For every $p_{1},p_{2}\in AP$ we
write $p_{1}\le_{AP}p_{2}$ if $\xx_{p_{1}}\leq\xx_{p_{2}}$, $\yy_{p_{1}}\leq\yy_{p_{2}}$
, and $h_{p_{1}}\subseteq h_{p_{2}}$. 

We proceed to construct an isomorphism by a back and forth argument.
In the forth part, we may add an element from $M_{\alpha_{1}}$ to
$B_{\xx}$ (thus increasing the $M_{\alpha_{1}}$-part of the domain
of $h$), or an element from $M_{\beta_{1}}$ to $\bar{d}_{\xx}$
(thus increasing the $M_{\beta_{1}}$-part). We also have to take
care of the limit stage. 

As one could take $p$ to be a trivial tuple by (\ref{enu:non-empty})
in Definition \ref{def:good family}, and as $\alpha_{1},\alpha_{1}\in\Esat$
(and their cofinality is $\lambda$ so that $M_{\alpha_{1}},M_{\alpha_{2}}$
are saturated), $AP\neq\emptyset$. 

Adding an element from $M_{\alpha_{1}}$: let $p\in AP$ and $a\in M_{\alpha_{1}}$.
As $h$ is a weak isomorphism, there is some isomorphism $h^{+}:\xx\to\yy$
extending $h$. Let $h^{+}\left(a\right)=b\in M_{\alpha_{2}}$. Thus,
by (\ref{enu:enbase}) in Definition \ref{def:good family}, we may
define $p'=\left(\xx',\yy',h'\right)$ by adding $a$ to $B_{\xx}$
and $b$ to $B_{\yy}$, and defining $h'=h\cup\left\{ \left(a,b\right)\right\} $.
Of course, $h'$ is still a weak isomorphism as witnessed by the same
$h^{+}$. It follows that $p\le_{AP}p'$. 

Adding an element from $M_{\beta_{1}}$: let $d\in M_{\beta_{1}}$
and $p\in AP$. Since $\mathfrak{F}$ is good, $\alpha_{1}\in\Esat$,
$\beta_{1}\in\Ecom$, and by (\ref{enu:enlarging}) in Definition
\ref{def:good family}, there is some $\xx\leq\xx'\in\Ff$ such that
$\bar{d}_{\xx}d\trianglelefteq\bar{d}_{\xx'}$ and $\xx'\Subset M_{\beta_{1}}$
(here we also used the fact that the cofinality of $\beta_{1}$ is
$\lambda$). 

Let $h^{+}:\xx\to\yy$ be as above. By (\ref{enu:iso-extension})
in Definition \ref{def:good family}, $h^{+}$ extends to an isomorphism
$h^{++}:\xx'\to\yy'$ for some $\yy'\in\mathfrak{F}$, such that $\yy\leq\yy'$
(and we may also assume that $\yy$ is contained in ${\bf M}$ by
Remark \ref{rem:everything in M}).

Since $\beta_{2}\in\Ebnf$ (and since its cofinality is $\lambda$),
there exists some $\yy''\in\mathfrak{F}$ such that $\yy''\Subset M_{\beta_{2}}$,
$\yy\leq\yy''$, and $\yy''$ is isomorphic to $\yy'$ over $\yy$,
as witnessed by $f:\yy'\to\yy''$ (in particular $f\upharpoonright M_{\alpha_{2}}$
is an automorphism of $M_{\alpha_{2}}$). We have then $p'=\left(\xx',\yy'',\left(f\circ h^{++}\right)\upharpoonright\left(B_{\xx'},\bar{d}_{\xx'},\bar{c}_{\xx'},r_{\xx'}\right)\right)\in\mathfrak{F}$
satisfies that $p\le_{AP}p'$ and $d\in\bar{d}_{\xx'}$. 

Of course we must also switch the roles of $\xx$ and $\yy$ in the
above steps. 

The limit stage: suppose $\sequence{p_{i}}{i<\delta}$ is an increasing
sequence of approximation where $\delta<\lambda$ is some limit. Let
\[
p=\sup_{i<\delta}p_{i}=\left(\sup_{i<\delta}\xx_{p_{i}},\sup_{i<\delta}\yy_{p_{i}},\bigcup_{i<\delta}h_{i}\right).
\]
This tuple is still in $AP$ by (\ref{enu:union}) and (\ref{enu:isohomo})
in Definition \ref{def:good family}. 
\end{proof}

\section{\label{sec:The-type-decomposition}type decompositions}

Section \ref{sec:The-generic-pair} gave the proof of the generic
pair conjecture (Conjecture \ref{conj:The generic pair conjecture})
by using $\lambda$-decompositions and a good family of these (Definition
\ref{def:good family}). Here we will start to construct what eventually
will be the good family. For this we need to define two kinds of decompositions.
The first is the \emph{tree-type decomposition} (explained in Subsection
\ref{sub:Tree-type-decomposition}), which is the basic building block
of the \emph{self-solvable decomposition} which will be introduced
in Subsection \ref{sub:Self-solvable-decomposition}. Eventually,
the good family will be the family of self-solvable decompositions. 

As usual, we assume that $\theta>\left|T\right|$ is a strongly compact
cardinal (unless $D$ is trivial and then $\theta=\left|T\right|^{+}$,
and also replace $<\theta$ satisfiable by finitely satisfiable when
appropriate, see the beginning of Section \ref{sec:The-generic-pair}),
and that $D$ is dependent. Also, assume that $\lambda=\lambda^{<\lambda}>\theta$.

\subsection{\label{sub:Tree-type-decomposition}Tree-type decomposition}
\begin{defn}
\label{def:tree type}Let $M\prec\mathfrak{C}_{D}$ be a $D$-model
of cardinality $\lambda$. A \emph{$\lambda$-tree-type decomposition}
is a $\lambda$-decomposition $\left(M,B,\bar{d},\bar{c},r\right)$
with the following properties:
\begin{enumerate}
\item The tuple $\bar{c}$ is of length $<\kappa=\theta+\left|\lg\left(\bar{d}\right)\right|^{+}$
and the type $\tp\left(\bar{c}/M\right)$ does not split over $B$.
See also Remark \ref{rem:f.s. instead of invariant}.
\item For every $A\subseteq M$ such that $\left|A\right|<\lambda$ there
exists some $\bar{e}_{A}\in M^{<\kappa}$ such that $\tp\left(\bar{d}/\bar{e}_{A}+\bar{c}\right)\vdash\tp\left(\bar{d}/A+\bar{c}\right)$.
By this we mean that if $\bar{d}'\in\C_{D}^{<\lambda}$ realizes the
same type as $\bar{d}$ over $\bar{e}_{A}+\bar{c}$ (which we denote
by $\bar{d}'\equiv_{\bar{e}_{A}\bar{c}}\bar{d}$), then $\bar{d}'\equiv_{A\bar{c}}\bar{d}$.
Note: we do not ask that this is true in $\C$, only in $\C_{D}$.
\end{enumerate}
\end{defn}
\begin{rem}
Why ``tree-type''? if $\xx$ is a tree-type decomposition such that
for simplicity $\lg\left(\bar{d}\right)<\theta$, then we may define
a partial order on $M^{<\theta}$ by $\bar{e}_{1}\leq\bar{e}_{2}$
if $\tp\left(\bar{d}/\bar{c}+\bar{e}_{2}\right)\vdash\tp\left(\bar{d}/\bar{c}+\bar{e}_{1}\right)$.
Then this order is $\lambda$-directed (so looks like a tree in some
sense). 
\end{rem}

\begin{rem}
If $\tp\left(\bar{d}/M\right)$ does not split over a $B$ (where
$\left|B\right|<\lambda$ as usual), then $\left(M,B,\bar{d},\bar{d},r\right)$
is a $\lambda$-tree-type decomposition for any $r$: in (2) take
$\bar{e}_{A}=\emptyset$. 
\end{rem}

\begin{rem}
\label{rem:f.s. instead of invariant}In Definition \ref{def:tree type}
(1), we could ask that $\tp\left(\bar{c}/M\right)$ is $<\theta$
satisfiable in $B$ in the sense that any $<\theta$ formulas from
this type \uline{in finitely many variables} are realized in $B$. 
\end{rem}

\begin{rem}
In this section, the role of $\bar{c}$ becomes clearer, but $r$
will not have any role. \end{rem}
\begin{example}
\cite[Exercise 2.18]{Sh:900} In DLO --- the theory of $\left(\Qq,<\right)$
--- suppose $M$ is a saturated model of cardinality $\lambda$, and
$d\in\C\backslash M$ is some point. Let $C_{1}$, $C_{2}$ be the
corresponding left and right cuts that $d$ determines in $M$. As
$M$ is saturated at least one of these cuts has cofinality $\lambda$.
If only one has, then $\tp\left(d/M\right)$ does not split over the
smaller cut, so $\left(M,C_{i},d,d,\emptyset\right)$ is a tree-type
decomposition for $i=1$ or $i=2$. Otherwise for each $A$ of cardinality
$<\lambda$, there are $e_{1}<d<e_{2}$ in $M$ such that $C_{1}\cap A<e_{1}<e_{2}<C_{2}\cap A$,
so $\tp\left(d/e_{1}e_{2}\right)\vdash\tp\left(d/e_{1}e_{2}A\right)$.
In this case, $\left(M,\emptyset,d,\emptyset,\emptyset\right)$ is
a tree-type decomposition. 
\end{example}
Our aim now is to prove that when $M$ is a $D$-model, then for every
$\bar{d}\in\mathfrak{C}_{D}^{<\lambda}$ there exists a tree-type
decomposition $\xx$ such that $\bar{d}=\bar{d}_{\xx}$. In fact,
we can start with any tree-type decomposition $\xx$, for instance
the trivial one $\left(M,\emptyset,\emptyset,\emptyset,\emptyset\right)$,
and find some tree-type decomposition $\yy\geq\xx$ such that $\bar{d}_{\yy}=\bar{d}_{\xx}\bar{d}$.
In a sense, we decompose the type of $\bar{d}$ over $M$ into two
parts: the invariant one and the ``tree-like'' one. 
\begin{defn}
\label{def:K}Let $M\prec\C_{D}$ be of size $\lambda$, $\bar{d}\in\C_{D}^{<\lambda}$
and $C\subseteq\C_{D}$ be of size $<\kappa=\left|\lg\left(\bar{d}\right)\right|^{+}+\theta<\lambda$.
The class $\K$ contains all pairs $\aa=\left(B_{\aa},\bar{c}_{\aa}\right)=\left(B,\bar{c}\right)$
such that: 
\begin{enumerate}
\item $\bar{c}=\sequence{\left(\bar{c}_{i,0},\bar{c}_{i,1}\right)}{i<\gamma}\in\left(\mathfrak{C}_{D}^{<\omega}\times\mathfrak{C}_{D}^{<\omega}\right)^{\gamma}$,
and $B\subseteq M$, $|B|<\lambda$. 
\item $\gamma<\kappa$. 
\item For all $i<\gamma$, $\tp\left(\bar{c}_{i}/MC+\bar{c}_{<i}\right)$
is $<\theta$ satisfiable in $B$ where $\bar{c}_{i}$ is $\bar{c}_{i,0}\concat\bar{c}_{i,1}$.
Abusing notation, we identify $\bar{c}$ with the concatenation of
$\bar{c}_{i}$ for $i<\gamma$. It follows that $\tp\left(\bar{c}/MC\right)$
does not split over $B$. 
\item For every $i<\gamma$, $\tp\left(\bar{c}_{i,0}/MC+\bar{c}_{<i}\right)=\tp\left(\bar{c}_{i,1}/MC+\bar{c}_{<i}\right)$
and in particular they are of the same (finite) length, and $\tp\left(\bar{c}_{i,0}/MC+\bar{c}_{<i}+\bar{d}\right)\neq\tp\left(\bar{c}_{i,1}/MC+\bar{c}_{<i}+\bar{d}\right)$.
\end{enumerate}
The class $\mxK$ consists of all the maximal elements in $\K$ with
respect to the order $<$ defined by $\aa<\bb$ iff $B_{\aa}\subseteq B_{\bb}$,
$\bar{c}_{\aa}\init\bar{c}_{\bb}$ and $\bar{c}_{\aa}\neq\bar{c}_{\bb}$.
That is, it contains all $\aa\in\K$ such that there is no $\bb\in\K$
with $B_{\aa}\subseteq B_{\bb}$ and $\bar{c}_{\aa}$ is a strict
first segment of $\bar{c}_{\bb}$.\end{defn}
\begin{thm}
\label{thm:Maximum exists}For every $\bar{d}\in\mathfrak{C}_{D}^{<\lambda}$,
$C$ and $M$ as in Definition \ref{def:K}, if $\aa\in\K$ then there
exists some $\bb\in\mxK$ such that $\aa\leq\bb$. \end{thm}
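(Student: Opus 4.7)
The proof goes by transfinite recursion, the key point being that any strictly ascending chain in $\K$ must terminate in fewer than $\kappa=\theta+|\lg(\bar{d})|^{+}$ steps. Starting from $\bb_{0}:=\aa$, I build an ascending chain $\sequence{\bb_{\alpha}}{\alpha\le\delta}$ in $\K$: at a successor stage pick a proper extension $\bb_{\alpha+1}>\bb_{\alpha}$ in $\K$ if one exists (otherwise $\bb_{\alpha}\in\mxK$ and we stop); at a limit $\delta'<\kappa$ take the union. That the union lies in $\K$ is routine: clauses (1), (3), (4) of Definition \ref{def:K} are local to the individual indices and hence preserved, while clause (2) follows from the regularity of $\kappa$ (since $\theta$ is measurable hence regular, and $|\lg(\bar{d})|^{+}$ is a successor) together with $\delta'<\kappa$ and $\lg(\bar{c}_{\bb_{\alpha}})<\kappa$ for each $\alpha<\delta'$.

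It then suffices to show the recursion must halt before stage $\kappa$. Suppose otherwise. We obtain a sequence $\sequence{(\bar{c}_{i,0},\bar{c}_{i,1})}{i<\kappa}$ and some $B\subseteq M$ of size $<\lambda$ satisfying clauses (1), (3), (4) of Definition \ref{def:K}. For each $i<\kappa$, clause (4) supplies a formula $\phi_{i}(\bar{x},\bar{y},\bar{z})$, a finite tuple $\bar{e}_{i}$ from $MC+\bar{c}_{<i}$, and a finite subtuple $\bar{d}_{i}\init\bar{d}$ witnessing (after possibly swapping the two halves)
\[
\C_{D}\models\phi_{i}(\bar{c}_{i,0},\bar{e}_{i},\bar{d}_{i})\land\neg\phi_{i}(\bar{c}_{i,1},\bar{e}_{i},\bar{d}_{i}).
\]
Since the number of such (formula, finite subtuple of $\bar{d}$) combinations is at most $|T|+|\lg(\bar{d})|<\kappa$ and $\kappa$ is regular, pigeonhole yields an unbounded $X\subseteq\kappa$ on which $\phi_{i}=\phi$ and $\bar{d}_{i}=\bar{d}_{0}$ are constant. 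Since $|X|\ge\theta$ and $\theta$ is measurable, Fact \ref{fact:indiscernibles exist} extracts an indiscernible subsequence of the triples $\sequence{(\bar{c}_{i,0},\bar{c}_{i,1},\bar{e}_{i})}{i\in X}$ of length $\theta$, still with the alternating $\phi$-pattern.

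The final task is to turn this indiscernible alternating configuration into a contradiction with the dependence of $D$. The indiscernible sequence together with the single witness $\bar{d}_{0}$ encodes, for each index, a $\phi$-splitting of $\bar{c}_{i,0}$ from $\bar{c}_{i,1}$ parameterised by $\bar{e}_{i}$; moreover by clause (4), $\bar{c}_{i,0}$ and $\bar{c}_{i,1}$ have the same type over \emph{all} earlier triples. Exploiting this, together with the $<\theta$-satisfiability in $B$ of $\tp(\bar{c}_{i}/MC+\bar{c}_{<i})$ furnished by clause (3), one manipulates the varying parameters $\bar{e}_{i}$ into a uniform position and produces an IP configuration for $\phi$ violating Lemma \ref{lem:eq formulations of dependence}. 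This last step --- converting the split-pattern with varying parameters into an honest IP configuration --- is the main technical obstacle, and parallels the NIP alternation-bound argument in the first-order treatment of \cite{Sh:900}.
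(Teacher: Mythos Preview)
Your overall architecture is right: build an ascending chain, take unions at limits (using regularity of $\kappa$), and argue that the chain cannot have length $\kappa$. The gap is in the last paragraph, where you yourself flag ``the main technical obstacle'' and then defer to \cite{Sh:900} rather than resolve it. The paper's proof handles this obstacle by a specific mechanism you have not identified, and your proposed route via indiscernible extraction does not obviously reach it.

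Concretely: the difficulty is that each witness $\bar{e}_{i}$ may contain coordinates from the earlier $\bar{c}_{<i}$, so the parameters move along the sequence. The paper does \emph{not} extract an indiscernible subsequence of $(\bar{c}_{i,0},\bar{c}_{i,1},\bar{e}_{i})$. Instead it separates $\bar{e}_{i}$ into an $MC$-part $\bar{a}_{i}$ and a $\bar{c}_{<i}$-part $\bar{b}_{i}$, applies Fodor's lemma to the regressive function $i\mapsto\max\{j:\bar{b}_{i}\text{ meets }\bar{c}_{j}\}$ to pin all $\bar{b}_{i}$ below a fixed stage $\beta$ on a stationary $S$, and then thins so that $\varphi_{i}$ is constant. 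The crucial step is a non-splitting swap: since $\bar{c}_{i,0}\equiv_{MC\bar{c}_{<i}}\bar{c}_{i,1}$ and $\tp(\bar{c}_{i}/MC\bar{c}_{<i})$ does not split over $MC$ (a consequence of $<\theta$-satisfiability in $B\subseteq M$), one proves by induction on $|S_{0}|$ that for \emph{every} $\eta:S\to 2$ the tuple $\langle\bar{c}_{i,\eta(i)}\rangle_{i\in S}$ has the same type over $MC\cup\bar{c}_{\le\beta}$. Since all parameters $\bar{a}_{i},\bar{b}_{i}$ now live in this fixed set, homogeneity of $\C_{D}$ produces, for each $R\subseteq S$, a $\bar{d}_{R}$ with $\varphi(\bar{d}_{R},\bar{c}_{i,0},\bar{a}_{i},\bar{b}_{i})$ iff $i\in R$, contradicting Lemma \ref{lem:eq formulations of dependence}.

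Your indiscernibility approach does not achieve this: indiscernibility of the triples tells you the sequence looks uniform, but it does not give the \emph{swap invariance} $\langle\bar{c}_{i,\eta(i)}\rangle\equiv\langle\bar{c}_{i,0}\rangle$ over a fixed parameter set containing all the $\bar{e}_{i}$; without Fodor you have no such fixed set, and without the non-splitting induction you have no swap. Pigeonholing on the finite subtuple of $\bar{d}$ is also unnecessary --- the paper simply keeps all of $\bar{d}$ in the $\bar{x}$-variables of $\varphi$.
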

\begin{proof}
Let $\bar{c}=\bar{c}_{\aa}=\sequence{\left(\bar{c}_{i,0},\bar{c}_{i,1}\right)}{i<\gamma}$.
We try to construct an increasing sequence $\sequence{\aa_{\alpha}}{\gamma\leq\alpha<\kappa}$
of elements in $\K$, where $\kappa=\left|\lg\left(\bar{d}\right)\right|^{+}+\theta<\lambda$,
as follows:
\begin{enumerate}
\item $\aa_{\gamma}=\aa$.
\item If $\alpha$ is limit then $\aa_{\alpha}=\sup_{\beta<\alpha}\aa_{\beta}$,
i.e., $B_{\aa_{\alpha}}=\bigcup_{\beta<\alpha}B_{\aa_{\beta}}$ and
$\bar{c}_{\aa_{\alpha}}=\bigcup_{\beta<\alpha}\bar{c}_{\aa_{\beta}}$.
Note that this is well defined, i.e., $\aa_{\alpha}\in\K$. 
\item Suppose $\alpha=\beta+1$ and $\aa_{\beta}$ has been constructed.
Let 
\[
\aa_{\alpha}=\left(B_{\alpha},\bar{c}_{\aa_{\beta}}\concat\left(\bar{c}_{\beta,0},\bar{c}_{\beta,1}\right)\right)
\]
just in case there are $\bar{c}_{\beta,0},\bar{c}_{\beta,1}\in\mathfrak{C}_{D}^{<\omega}$,
$B_{\aa_{\beta}}\subseteq B_{\alpha}\subseteq M$ such that $\aa_{\alpha}\in\K$. 
\end{enumerate}
If we got stuck somewhere in the construction it must be in the successor
stage $\alpha$, and then $\aa_{\alpha}\in\mxK$ is as requested.
So suppose we succeed: we constructed $\sequence{\left(\bar{c}_{\alpha,0},\bar{c}_{\alpha,1}\right)}{\alpha<\kappa}$.
As usual we denote $\bar{c}_{\alpha}=\bar{c}_{\alpha,0}\concat\bar{c}_{\alpha,1}$. 

By the definition of $\K$, it follows that for every $\alpha<\kappa$,
there are $\bar{a}_{\alpha}\in A^{<\omega}$ where $A=MC$, $\bar{b}_{\alpha}\in C_{\alpha}^{<\omega}$
where $C_{\alpha}=\bigcup_{\beta<\alpha}\bar{c}_{\beta}$, and a formula
$\varphi_{\alpha}\left(\bar{x}_{\bar{d}},\bar{w}_{\alpha},\bar{y}_{\alpha},\bar{z}_{\alpha}\right)$
such that $\C_{D}\models\varphi_{\alpha}\left(\bar{d},\bar{c}_{\alpha,0},\bar{a}_{\alpha},\bar{b}_{\alpha}\right)$
but $\C_{D}\models\neg\varphi_{\alpha}\left(\bar{d},\bar{c}_{\alpha,1},\bar{a}_{\alpha},\bar{b}_{\alpha}\right)$.
(The variables are all in the appropriate length, but only finitely
many of them appear in the formula.)

For every $\alpha<\kappa$, let $f\left(\alpha\right)$ be the maximal
ordinal $<\alpha$ such that $\bar{b}_{\alpha}$ intersects $\bar{c}_{f\left(\alpha\right)}$.
By Fodor's Lemma, There exists some cofinal set $S\subseteq\kappa$
and $\beta<\kappa$ such that for every $\alpha\in S$ we have $f\left(\alpha\right)=\beta$.
By restricting to a smaller set, we may assume that for any $\alpha\in S$,
$\alpha>\beta$ and $\varphi_{\alpha}=\varphi$ is constant. 

As $\bar{c}_{\alpha,0}\equiv_{A\bar{c}_{<\alpha}}\bar{c}_{\alpha,1}$
and as $\tp\left(\bar{c}_{\alpha}/A+\bar{c}_{<\alpha}\right)$ does
not split over $A$, it follows that $\tp\left(\sequence{\bar{c}_{\alpha,\eta\left(\alpha\right)}}{\alpha\in S}/AC_{\beta+1}\right)$
does not depend on $\eta$ when $\eta:S\to2$. To prove this it is
enough to consider a finite subset $S_{0}\subseteq S$, and to prove
it by induction on its size. Indeed, given $S_{0}=\left\{ \alpha_{0}<\ldots<\alpha_{n+1}\right\} $,
and any $\eta:S_{0}\to2$, 
\begin{align*}
\sequence{\bar{c}_{\alpha,\eta\left(\alpha\right)}}{\alpha\in S_{0}}\equiv_{AC_{\beta+1}} & \sequence{\bar{c}_{\alpha,\eta\left(\alpha\right)}}{\alpha\in S_{0}\backslash\left\{ \alpha_{n+1}\right\} }\concat\left\langle \bar{c}_{\alpha_{n+1},0}\right\rangle \\
\equiv_{AC_{\beta+1}} & \sequence{\bar{c}_{\alpha,0}}{\alpha\in S_{0}}.
\end{align*}

It follows by homogeneity that for any subset $R$ of $S$ there is
some $\bar{d}_{R}\in\C_{D}^{<\lambda}$ such that $\C_{D}\models\varphi\left(\bar{d}_{R},\bar{c}_{\alpha,0},\bar{a}_{\alpha},\bar{b}_{\alpha}\right)$
iff $\alpha\in R$. But this is a contradiction to the fact that $D$
is dependent, see Lemma \ref{lem:eq formulations of dependence} (5).\end{proof}
\begin{defn}
\label{def:orthogonal}Suppose $p\left(\bar{x}\right),q\left(\bar{y}\right)\in S_{D}\left(A\right)$
for some $A\subseteq\C_{D}$. We say that $p$ is \emph{orthogonal}%
\footnote{Usually this notion is called \emph{weakly orthogonal}, as the notion
of orthogonal types already has meaning in stable theories. However
here we have no room for confusion, so we decided to stick with the
simpler term.%
} to $q$ if there is a unique $r\left(\bar{x},\bar{y}\right)\in S_{D}\left(A\right)$
which extends $p\left(\bar{x}\right)\cup q\left(\bar{y}\right)$. 
\end{defn}

\begin{defn}
\label{def:tree like}Suppose that $M\prec\C_{D}$, and $C\subseteq\C_{D}$
is some set. Let $p\in S_{D}\left(MC\right)$. We say that $p$ is
\emph{tree-like (with respect to $M$,$C$)} if it is orthogonal to
every $q\in S_{D}^{<\omega}\left(MC\right)$ for which there exists
some $B\subseteq M$ with $|B|<|M|$ such that $q$ is $<\theta$
satisfiable in $B$.\end{defn}
\begin{prop}
\label{prop:tree-like}Let $M,C$ be as in Definition \ref{def:tree like}.
Suppose that $p\in S_{D}^{\alpha}\left(MC\right)$ is tree-like and
that $\left|C\right|<\kappa=\theta+\left|\alpha\right|^{+}$. Then
for every $B\subseteq M$ such that $\left|B\right|<\left|M\right|$
there exists some $E\subseteq M$ with $\left|E\right|<\kappa$ such
that $p|_{CE}\vdash p|_{CB}$. \end{prop}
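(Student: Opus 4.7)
The plan is to argue by contradiction and derive a failure of dependence of $D$. Suppose there is $B\subseteq M$ with $|B|<|M|$ such that $p|_{CE}\not\vdash p|_{CB}$ for every $E\subseteq M$ with $|E|<\kappa$, and fix $\bar{d}\models p$. For each such $E$, the hypothesis produces $\bar{d}_{E}\equiv_{CE}\bar{d}$ together with $\bar{b}_{E}\in B^{<\omega}$, $\bar{c}_{E}\in C^{<\omega}$, and a formula $\psi_{E}$ such that (after orienting) $\psi_{E}(\bar{d},\bar{b}_{E},\bar{c}_{E})\wedge\neg\psi_{E}(\bar{d}_{E},\bar{b}_{E},\bar{c}_{E})$. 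I would inductively build a sequence $\langle(\bar{b}_{\alpha},\bar{d}_{\alpha})\rangle_{\alpha<\theta}$ by setting $E_{\alpha}:=C\cup\bar{b}_{<\alpha}$ (of size $<\kappa$, since $\alpha<\theta\le\kappa$ and $|C|<\kappa$) and picking $(\bar{b}_{\alpha},\bar{d}_{\alpha})$ as a witness for $E_{\alpha}$.

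Since $\theta>|T|$ is measurable, pigeonholing along a normal $\theta$-complete ultrafilter (Fact \ref{fact:indiscernibles exist}) lets me assume that $\psi_{\alpha}=\psi$ and $\bar{c}_{\alpha}=\bar{c}^{*}$ are constant on a cofinal $X\subseteq\theta$, and Corollary \ref{cor:full-indiscernibles} additionally makes $\langle(\bar{b}_{\alpha},\bar{d}_{\alpha})\rangle_{\alpha\in X}$ fully indiscernible over $C$ with respect to the filtration by the $E_{\alpha}$'s. For $\alpha_{1}<\alpha_{2}$ in $X$, the inclusion $\bar{b}_{\alpha_{1}}\in E_{\alpha_{2}}$ with $\bar{d}_{\alpha_{2}}\equiv_{CE_{\alpha_{2}}}\bar{d}$ forces $\psi(\bar{d}_{\alpha_{2}},\bar{b}_{\alpha_{1}},\bar{c}^{*})$; together with $\neg\psi(\bar{d}_{\alpha},\bar{b}_{\alpha},\bar{c}^{*})$ and indiscernibility, the truth of $\psi(\bar{d}_{\alpha_{1}},\bar{b}_{\alpha_{2}},\bar{c}^{*})$ for $\alpha_{1}<\alpha_{2}$ is constant across such pairs. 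If that constant is false, we recover an order-type pattern in $\psi$, and by Lemma \ref{lem:eq formulations of dependence} combined with the strong compactness of $\theta$ (via the average-type machinery of Lemma \ref{lem:average type along ultrafilter}) the formula $\psi$ has the independence property, contradicting the dependence of $D$.

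The main obstacle is the remaining ``exclude-diagonal'' case, where $\psi(\bar{d}_{\alpha_{1}},\bar{b}_{\alpha_{2}},\bar{c}^{*})$ holds whenever $\alpha_{1}\neq\alpha_{2}$, which is \emph{a priori} compatible with dependence. Here tree-likeness of $p$ enters crucially: I would replace each $\bar{b}_{\alpha}$ by a realization $\bar{b}_{\alpha}^{*}\in\C_{D}\setminus M$ of an extension of $\tp(\bar{b}_{\alpha}/CE_{\alpha})$ to a complete type over $MC$ that is $<\theta$-satisfiable in $B$, obtained from the strong compactness of $\theta$ via Lemma \ref{lem:average type along ultrafilter}. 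Since $|B|<|M|$, tree-likeness of $p$ yields $p\perp\tp(\bar{b}_{\alpha}^{*}/MC)$, so the truth value of $\psi(\bar{d},\bar{b}_{\alpha}^{*},\bar{c}^{*})$ is determined by the chosen extension. By picking extensions concentrated on $B\cap\{\bar{b}:\psi(\bar{d},\bar{b},\bar{c}^{*})\}$ versus its complement in $B$ (both nonempty, modulo trivial cases where $\psi(\bar{x},\bar{y},\bar{c}^{*})$ already lies in $p|_{C}$, which are handled by a direct argument), one prescribes arbitrary truth values of $\psi(\bar{d},\bar{b}_{\alpha}^{*},\bar{c}^{*})$ as $\alpha$ ranges over $X$, violating dependence by Lemma \ref{lem:eq formulations of dependence} and yielding the contradiction.
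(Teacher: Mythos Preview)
Your overall strategy --- build a long sequence of witnesses and contradict dependence of $D$ --- is both harder than necessary and has real gaps. The paper's proof never touches dependence; it contradicts tree-likeness directly.

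First, the gaps. In your ``order case'', the pattern $\psi(\bar{d}_{\alpha_1},\bar{b}_{\alpha_2},\bar{c}^*)\Leftrightarrow\alpha_1>\alpha_2$ is the order property, not the independence property, and Lemma~\ref{lem:eq formulations of dependence} gives you nothing from that. In your ``exclude-diagonal'' case, the conclusion ``one prescribes arbitrary truth values of $\psi(\bar{d},\bar{b}_\alpha^*,\bar{c}^*)$'' does not violate dependence either: you have a \emph{single} $\bar{d}$ and are freely choosing the $\bar{b}_\alpha^*$'s, with no indiscernibility among them. To invoke Lemma~\ref{lem:eq formulations of dependence} you would need, for every subset $s$ of the index set, a tuple $\bar{d}_s$ alternating on a \emph{fixed} sequence of $\bar{b}$'s --- and nothing in your construction produces that.

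The paper's argument is short and avoids all of this. Reduce to a single formula $\varphi(\bar{x},\bar{y},\bar{c})$ with $\bar{c}\in C^{<\omega}$: it suffices to find $E_\varphi\in[M]^{<\theta}$ with $p|_{E_\varphi C}\vdash(p\upharpoonright\varphi)|_B$, and then take $E=\bigcup_\varphi E_\varphi$. If no such $E_\varphi$ exists, then for each $E\in I=[M]^{<\theta}$ there are $\bar{d}_1^E,\bar{d}_2^E\models p|_{EC}$ and $\bar{b}_E\in B^{<\omega}$ with $\varphi(\bar{d}_1^E,\bar{b}_E,\bar{c})\wedge\neg\varphi(\bar{d}_2^E,\bar{b}_E,\bar{c})$. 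Use strong compactness of $\theta$ to get a $\theta$-complete ultrafilter $\mathcal U$ on $I$ containing all upward cones, and let $\bar{d}_1\bar{d}_2\bar{b}$ realize the average $\average{\langle\bar{d}_1^E\bar{d}_2^E\bar{b}_E\rangle_{E\in I}}{MC}{\mathcal U}$, which is a $D$-type by Lemma~\ref{lem:average type along ultrafilter}. Then $\bar{d}_1,\bar{d}_2\models p$ (by the choice of $\mathcal U$), $r'=\tp(\bar{b}/MC)$ is $<\theta$-satisfiable in $B$, yet $\bar{d}_1,\bar{d}_2$ disagree on $\varphi(-,\bar{b},\bar{c})$. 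So $p$ is not orthogonal to $r'$, contradicting tree-likeness. No appeal to dependence of $D$, no indiscernible sequences, no case split.
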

\begin{proof}
It is enough to show that for any formula $\varphi\left(\bar{x},\bar{y},\bar{c}\right)$
where $\bar{c}$ is a finite tuple from $C$, there is some $E_{\varphi}\subseteq M$
such that $\left|E_{\varphi}\right|<\theta$ and 
\[
p|_{E_{\varphi}C}\vdash\left(p\upharpoonright\varphi\right)|_{B}=\set{\varphi\left(\bar{x},\bar{b},\bar{c}\right)\in p}{\bar{b}\in B^{\lg\left(\bar{y}\right)}}
\]
 (because then we let $E=\bigcup_{\varphi}E_{\varphi}$). 

Suppose not. Let $I=\left[M\right]{}^{<\theta}$ (all subsets of $M$
of size $<\theta$), then for every $E\in I$ there exists some $\bar{d}_{1}^{E},\bar{d}_{2}^{E}\in\mathfrak{C}_{D}^{\alpha}$,
$\bar{b}_{E}\in B^{\lg\left(\bar{y}\right)}$ such that $\bar{d}_{1}^{E},\bar{d}_{2}^{E}$
realize $p|_{EC}$ and $\mathfrak{C}_{D}\models\varphi\left(\bar{d}_{1}^{E},\bar{b}_{E},\bar{c}\right)\wedge\neg\varphi\left(\bar{d}_{2}^{E},\bar{b}_{E},\bar{c}\right)$.
By strong compactness, there is some $\theta$-complete ultrafilter
$\cal U$ on $I$ such that for every $X\in I$ we have $\set{Y\in I}{X\subseteq Y}\in\cal U$. 

By Lemma \ref{lem:average type along ultrafilter}, $r=\average{\sequence{\bar{d}_{1}^{E}\bar{d}_{2}^{E}\bar{b}_{E}}{E\in I}}{MC}{\cal U}\in S_{D}\left(MC\right)$.
Let $\bar{d}_{1},\bar{d}_{2}\in\mathfrak{C}_{D}^{\alpha}$ and $\bar{b}\in\mathfrak{C}_{D}^{<\omega}$
be such that $\bar{d}_{1}\bar{d}_{2}\bar{b}$ is a realization of
$r$. Now, $r'=\tp(\bar{b}/MC)$ is $<\theta$ satisfiable in $B$,
$\bar{d}_{1},\bar{d}_{2}$ realize $p$ (by our choice of $\cal U$)
but $\tp\left(\bar{d}_{1}/\bar{b}\bar{c}\right)\neq\tp\left(\bar{d}_{2}/\bar{b}\bar{c}\right)$
(as witnessed by $\varphi$). Hence $p$ is not orthogonal to $r'$,
which is a contradiction. \end{proof}
\begin{rem}
\label{rem:extension exists} Let $A\subseteq B\subseteq C\subseteq\C_{D}$.
If $p\in S_{D}^{n}\left(B\right)$ is $<\theta$ satisfiable in $A$
and $n<\omega$ then there is an extension $p\subseteq q\in S_{D}^{n}\left(C\right)$
which is $<\theta$ satisfiable in $A$. Indeed, let $\cal U_{0}=\set{\varphi\left(A^{n}\right)}{\varphi\in p}$,
note that it is $\theta$-complete, and extend it to a $\theta$-complete
ultrafilter $\cal U$ on all subsets of $A^{n}$. Let $q=\set{\varphi\left(\bar{x},\bar{c}\right)}{\bar{c}\subseteq C,\varphi\left(A^{n},\bar{c}\right)\in\cal U}$.
Now, as $\left|T\right|<\theta$ this type is a $D$-type: for any
finite tuple $\bar{c}$ from $C$, $q|_{\bar{c}}$ is realized by
some tuple from $A^{n}$ (as in the proof of Lemma \ref{def:avgtp}).
\end{rem}
\begin{thm}
\label{thm:tree} Let $M\prec\C_{D}$, $\bar{d}\in\mathfrak{C}_{D}^{<\lambda}$,
and $\bar{c}'\in\C_{D}^{<\lambda}$ be of length $<\kappa=\left|\lg\left(\bar{d}\right)\right|^{+}+\theta$.
Let $C=\bigcup\bar{c}'$ and suppose that $\aa\in\mxK$ and that $\tp\left(\bar{c}'/M\right)$
does not split over $B_{\aa}$. Then for any $r$, $\xx=\left(M,B_{\aa},\bar{d},\bar{c}'\bar{c}_{\aa},r\right)$
is a $\lambda$-tree-type decomposition (see Definition \ref{def:tree type}).\end{thm}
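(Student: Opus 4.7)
The plan is to verify the two clauses of Definition \ref{def:tree type} for $\xx=(M,B_\aa,\bar{d},\bar{c}'\bar{c}_\aa,r)$. The length bound $\lg(\bar{c}'\bar{c}_\aa)<\kappa$ is immediate from regularity of $\kappa$. For non-splitting of $\tp(\bar{c}'\bar{c}_\aa/M)$ over $B_\aa$, I would combine the given non-splitting of $\tp(\bar{c}'/M)$ over $B_\aa$ with the non-splitting of $\tp(\bar{c}_\aa/MC)$ over $B_\aa$ (which is part of $\aa\in\K$, by clause (3) of Definition \ref{def:K}) through a transitivity argument: given $\bar{a}_1\equiv_{B_\aa}\bar{a}_2$ in $M$, first obtain $\bar{a}_1\equiv_{B_\aa\bar{c}'}\bar{a}_2$, then apply the second non-splitting to the pair $\bar{a}_1\bar{c}',\bar{a}_2\bar{c}'\in MC$ to conclude $\bar{a}_1\equiv_{B_\aa\bar{c}'\bar{c}_\aa}\bar{a}_2$.

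The substantive content is clause (2) of Definition \ref{def:tree type}. Set $p=\tp(\bar{d}/MC+\bar{c}_\aa)$ and note that $|C+\bar{c}_\aa|<\kappa$. Applied with $C$ replaced by $C+\bar{c}_\aa$, Proposition \ref{prop:tree-like} delivers the required $\bar{e}_A$ for every $A\subseteq M$ of size $<\lambda$, \emph{provided} $p$ is tree-like with respect to $M$ and $C+\bar{c}_\aa$ in the sense of Definition \ref{def:tree like}. The theorem therefore reduces to establishing tree-likeness of $p$.

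To prove tree-likeness I argue by contradiction and exploit the maximality of $\aa$. Suppose $q\in S_D^n(MC+\bar{c}_\aa)$ (for some $n<\omega$) is $<\theta$ satisfiable in some $B\subseteq M$ with $|B|<\lambda$, and $p$ is not orthogonal to $q$. Using strong homogeneity of $\C_D$ to fix $\bar{d}$, one obtains from non-orthogonality two distinct complete extensions $p_0,p_1\in S_D^n(MC+\bar{c}_\aa+\bar{d})$ of $q$. By Remark \ref{rem:extension exists}, $q$ extends to a $<\theta$-satisfiable-in-$B$ type $q^*\in S_D^n(MC+\bar{c}_\aa+\bar{d})$; at least one of $p_0,p_1$ differs from $q^*$, and relabeling I may assume $p_0\neq q^*$. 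Pick $\bar{c}_0\models p_0$ and, applying Remark \ref{rem:extension exists} again, extend $q^*$ to a $<\theta$-satisfiable-in-$B$ type $q^{**}\in S_D^n(MC+\bar{c}_\aa+\bar{d}+\bar{c}_0)$; pick $\bar{c}_1\models q^{**}$. Then (i) both $\bar{c}_0,\bar{c}_1$ realize $q$ over $MC+\bar{c}_\aa$; (ii) $\tp(\bar{c}_0/MC+\bar{c}_\aa+\bar{d})=p_0\neq q^*=\tp(\bar{c}_1/MC+\bar{c}_\aa+\bar{d})$; and (iii) $\tp(\bar{c}_0\bar{c}_1/MC+\bar{c}_\aa)$ is $<\theta$ satisfiable in $B$, the last by first realizing any given $<\theta$ many formulas in the second coordinate inside $B$ (using $q^{**}$), then in the first coordinate inside $B$ (using $q$). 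Adjoining the pair $(\bar{c}_0,\bar{c}_1)$ to $\bar{c}_\aa$ and replacing $B_\aa$ by $B_\aa\cup B$ produces a strict extension of $\aa$ in $\K$, contradicting $\aa\in\mxK$.

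The main obstacle is securing clause (iii) — joint $<\theta$ satisfiability — \emph{simultaneously} with clause (ii), the difference of types over $\bar{d}$. A naive symmetric averaging of both coordinates via a single ultrafilter would force both realizations to share a type over $\bar{d}$, destroying (ii). The asymmetric two-step use of Remark \ref{rem:extension exists} — first choosing $\bar{c}_0$ from a ``non-average'' extension $p_0\neq q^*$, then $\bar{c}_1$ from an ``average-like'' extension over $\bar{c}_0$ — is the precise device that reconciles these competing requirements.
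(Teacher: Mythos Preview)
Your proposal is correct and follows the same strategy as the paper: verify non-splitting of $\tp(\bar{c}'\bar{c}_\aa/M)$ over $B_\aa$, reduce clause~(2) to tree-likeness of $\tp(\bar{d}/M\bar{c})$ via Proposition~\ref{prop:tree-like}, and then contradict maximality of $\aa$ by appending a new pair to $\bar{c}_\aa$.

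The only difference is that the paper's construction of the new pair is shorter. After fixing $\bar{d}$ by homogeneity, it keeps the two original witnesses $\bar{b}_1,\bar{b}_2\models q$ with $\bar{d}\bar{b}_1\not\equiv_{M\bar{c}}\bar{d}\bar{b}_2$, extends $q$ \emph{once} to some $q'\in S_D^{<\omega}(M\bar{c}\bar{b}_1\bar{b}_2)$ that is $<\theta$ satisfiable in $B$, and picks $\bar{b}\models q'$; since $\bar{b}$ has a single type over $M\bar{c}\bar{d}$, it must differ there from at least one $\bar{b}_i$, and the pair $(\bar{b}_i,\bar{b})$ already does the job. Your detour through the ``average'' extension $q^*$ (and the case distinction $p_0\neq q^*$) is unnecessary: one of the non-orthogonality witnesses itself can serve as the first coordinate of the new pair. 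The two-step verification of joint $<\theta$ satisfiability in $B$ is identical in both arguments.
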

\begin{proof}
As $\tp\left(\bar{c}'/M\right)$ does not split over $B_{\aa}$, and
$\tp\left(\bar{c}_{\aa}/MC\right)$ does not split over $B_{\aa}$,
it follows that $\tp\left(\bar{c}'\bar{c}_{\aa}/M\right)$ does not
split over $B_{\aa}$. Let $\bar{c}=\bar{c}'\bar{c}_{\aa}$. We are
left to check that for every $A\subseteq M$ such that $\left|A\right|<\lambda$
there exists some $\bar{e}_{A}\in M^{<\kappa}$ where $\kappa=\left|\lg\left(\bar{d}\right)\right|^{+}+\theta$,
such that $\tp\left(\bar{d}/\bar{e}_{A}+\bar{c}\right)\vdash\tp\left(\bar{d}/A+\bar{c}\right)$.

By Proposition \ref{prop:tree-like} it is enough to prove that $p\left(\bar{x}\right)=\tp(\bar{d}/M+\bar{c})$
is tree-like (with respect to $M$, $\bar{c}$). Let $q\left(\bar{y}\right)\in S_{D}^{<\omega}\left(M+\bar{c}\right)$
be some type which is $<\theta$ satisfiable in some $B\subseteq M$
with $\left|B\right|<\lambda$. Suppose that $p$ is not orthogonal
to $q$. This means that there are $\bar{d}_{1},\bar{d}_{2},\bar{b}_{1},\bar{b}_{2}$
in $\C_{D}$ such that $\bar{d}_{1},\bar{d}_{2}\models p$, $\bar{b}_{1},\bar{b}_{2}\models q$
and $\bar{d}_{1}\bar{b}_{1}\not\equiv_{M\bar{c}}\bar{d}_{2}\bar{b}_{2}$.
By homogeneity, we may assume $\bar{d}_{1}=\bar{d}_{2}=\bar{d}$.
Let $q'\left(\bar{y}\right)\in S_{D}^{<\omega}\left(M+\bar{c}\bar{b}_{1}\bar{b}_{2}\right)$
be an extension of $q$ which is $<\theta$ satisfiable in $B$ (which
exists by Remark \ref{rem:extension exists}), and let $\bar{b}\models q'$.
Then for some $i=1,2$, it must be that $\bar{d}\bar{b}_{i}\not\equiv_{M\bar{c}}\bar{d}\bar{b}$.
Let $\bb\geq\aa$ be $\left(B_{\aa}\cup B,\bar{c}_{\aa}\concat\left(\bar{b}_{i},\bar{b}\right)\right)$,
then easily $\bb\in\K$, which contradicts the maximality of $\aa$. 
\end{proof}
By Theorems \ref{thm:Maximum exists} and \ref{thm:tree}, we get
that:
\begin{cor}
\label{cor:existence of tree type dec}Suppose $\xx$ is a $\lambda$-tree-type
decomposition, and $\bar{d}_{0}\in\C_{D}^{<\lambda}$. Then there
exists some $\lambda$-tree-type decomposition $\yy\geq\xx$ such
that $\bar{d}_{\xx}\bar{d}_{0}=\bar{d}_{\yy}$ and $r_{\yy}=r_{\xx}$. \end{cor}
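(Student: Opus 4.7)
The plan is to read the corollary as a direct packaging of Theorems~\ref{thm:Maximum exists} and \ref{thm:tree}. Set $\bar{d}^{*}:=\bar{d}_{\xx}\bar{d}_{0}\in\C_{D}^{<\lambda}$, $C:=\bigcup\bar{c}_{\xx}$, and $\kappa^{*}:=\theta+|\lg(\bar{d}^{*})|^{+}<\lambda$. Since $|\lg(\bar{d}_{\xx})|\le|\lg(\bar{d}^{*})|$ we have $|C|<\theta+|\lg(\bar{d}_{\xx})|^{+}\le\kappa^{*}$, so the data $(\bar{d}^{*},M,C)$ with parameter $\kappa^{*}$ is an admissible input for the classes $\K$ and $\mxK$ of Definition~\ref{def:K}.

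The trivial pair $\aa:=(B_{\xx},\langle\rangle)$ lies in $\K$, so by Theorem~\ref{thm:Maximum exists} there exists $\bb\in\mxK$ with $\aa\leq\bb$; in particular $B_{\xx}\subseteq B_{\bb}\subseteq M$, $|\bar{c}_{\bb}|<\kappa^{*}$, and $\tp(\bar{c}_{\bb}/MC)$ does not split over $B_{\bb}$. Next I apply Theorem~\ref{thm:tree} with $\bar{c}':=\bar{c}_{\xx}$, ambient tuple $\bar{d}^{*}$, and base $B_{\bb}$. The length condition $|\bar{c}'|<\kappa^{*}$ holds because $|\bar{c}_{\xx}|<\theta+|\lg(\bar{d}_{\xx})|^{+}\le\kappa^{*}$, and non-splitting of $\tp(\bar{c}_{\xx}/M)$ over $B_{\bb}$ is inherited from non-splitting over $B_{\xx}\subseteq B_{\bb}$. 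The theorem then produces a $\lambda$-tree-type decomposition
\[
\yy:=\bigl(M,B_{\bb},\bar{d}^{*},\bar{c}_{\xx}\bar{c}_{\bb},r_{\yy}\bigr)
\]
for any complete $D$-type $r_{\yy}$ in the appropriate variables; I take $r_{\yy}$ to be any complete $D$-type extending $r_{\xx}$, which exists by realising $r_{\xx}$ in $\C_{D}$ and extending that realisation arbitrarily to the additional variables.

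Finally, $\xx\leq\yy$ is immediate: $M_{\xx}=M_{\yy}$, $B_{\xx}\subseteq B_{\bb}=B_{\yy}$, $r_{\xx}\subseteq r_{\yy}$, $\bar{c}_{\xx}\init\bar{c}_{\xx}\bar{c}_{\bb}=\bar{c}_{\yy}$, and $\bar{d}_{\xx}\init\bar{d}^{*}=\bar{d}_{\yy}$. There is no real obstacle here---all the substantive model-theoretic work is absorbed into Theorems~\ref{thm:Maximum exists} and \ref{thm:tree}. The only points requiring mild care are the upgrade of the parameter from $\kappa=\theta+|\lg(\bar{d}_{\xx})|^{+}$ to $\kappa^{*}$ (leaving room for the old $\bar{c}_{\xx}$ to serve as the ``external'' piece $\bar{c}'$ of Theorem~\ref{thm:tree}), and the observation that non-splitting is monotone in the base, so enlarging $B_{\xx}$ to $B_{\bb}$ preserves the non-splitting hypothesis on $\bar{c}_{\xx}$.
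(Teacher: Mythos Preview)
Your proof is correct and follows essentially the same route as the paper: start from the trivial pair $(B_{\xx},\langle\rangle)$ in $\K$ (with $\bar d=\bar d_{\xx}\bar d_0$ and $C=\bigcup\bar c_{\xx}$), push it up to a maximal element via Theorem~\ref{thm:Maximum exists}, then feed this together with $\bar c'=\bar c_{\xx}$ into Theorem~\ref{thm:tree}. Your extra bookkeeping (checking $|C|<\kappa^{*}$ and that non-splitting is preserved when enlarging the base) is exactly right and makes explicit what the paper leaves implicit.

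The one small discrepancy is your handling of $r_{\yy}$: the corollary as stated asks for $r_{\yy}=r_{\xx}$ literally, and the paper's proof simply plugs $r_{\xx}$ into Theorem~\ref{thm:tree}, whereas you extend $r_{\xx}$ to a complete type in the larger variable set. Your version is arguably more faithful to the formal definition of a $\lambda$-decomposition (which requires $r$ to be complete), and since $r$ plays no role in the tree-type property and the only later use of this corollary (in Proposition~\ref{prop:finding self solvable by chasing tail}) discards $r$ and redefines it anyway, this difference is immaterial.
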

\begin{proof}
Apply Theorem \ref{thm:Maximum exists} with $\bar{d}=\bar{d}_{\xx}\bar{d}_{0}$,
$C=\bigcup\bar{c}_{\xx}$, $M=M_{\xx}$ and $\bb=\left(B_{\xx},\emptyset\right)$,
to get some $\bb\leq\aa\in\mxK$. Now apply Theorem \ref{thm:tree}
with $\bar{c}'=\bar{c}_{\xx}$, $\aa$ and $r_{\xx}$.
\end{proof}

\subsection{\label{sub:Self-solvable-decomposition}Self-solvable decomposition }

\begin{defn}
\label{def:self-solvable}Let $M\prec\mathfrak{C}_{D}$ be a $D$-model
of cardinality $\lambda$. A \emph{$\lambda$-self-solvable decomposition}%
\footnote{In \cite[Definition 3.6]{Sh950}, this is called $\mathbf{tK}$. %
} is a $\lambda$-tree-type decomposition $\left(M,B,\bar{d},\bar{c},r\right)$
such that for every $A\subseteq M$ with $\left|A\right|<\lambda$
there exists some $\bar{c}_{A}\bar{d}_{A}\in M^{<\lambda}$ with the
following properties:
\begin{enumerate}
\item \label{enu:same length}The tuple $\bar{c}_{A}$ has the same length
as $\bar{c}$ (so $<\kappa=\left|\lg\left(\bar{d}\right)\right|^{+}+\theta$)
and $\bar{d}_{A}$ has the same length as $\bar{d}$. 
\item \label{enu:realize r}$\left(\bar{c}_{\xx},\bar{d}_{\xx},\bar{c}_{A},\bar{d}_{A}\right)$
realize $r_{\xx}\left(\bar{x}_{\bar{c}_{\xx}},\bar{x}_{\bar{d}_{\xx}},\bar{x}_{\bar{c}_{\xx}}',\bar{x}_{\bar{d}_{\xx}}'\right)$.
\item \label{enu:same type over small set}$\left(\bar{c}_{A},\bar{d}_{A}\right)$
realize $\tp\left(\bar{c}_{\xx}\bar{d}_{\xx}/A\right)$. 
\item \label{enu:The-strong tree}The main point is that we extend point
(2) from Definition \ref{def:tree type} by demanding that $\tp\left(\bar{d}_{\xx}/\bar{c}_{A}+\bar{d}_{A}+\bar{c}_{\xx}\right)\vdash\tp\left(\bar{d}_{\xx}/A+\bar{c}_{\xx}+\bar{c}_{A}+\bar{d}_{A}\right)$. 
\end{enumerate}
\end{defn}
The first thing we would like to show is that under the assumption
that $\lambda$ is measurable, a $\lambda$-self-solvable decomposition
exists. In the first order case one can weaken the assumption to ask
that $\lambda$ is weakly compact (see \cite[Claim 3.27]{Sh950}).
However, we do not know how to extend this results to $D$-models,
so we omit it. 

Note that the trivial decomposition $\left(M,\emptyset,\emptyset,\emptyset,\emptyset\right)$
is a $\lambda$-self-solvable decomposition. 
\begin{prop}
\label{prop:finding self solvable by chasing tail} Let $M$ be a
$D$-saturated model of cardinality $\lambda$, with $\lambda>\theta$
measurable. Let $\cal U$ be a normal non-principal $\lambda$-complete
ultrafilter on $\lambda$. Let $\xx$ be a $\lambda$-self-solvable
decomposition with $M_{\xx}=M$, and let $\bar{d}\in\C_{D}^{<\lambda}$.
Also write $M$ as an increasing continuous union $\bigcup_{\alpha<\lambda}M_{\alpha}$
where $M_{\alpha}\subseteq M$ is of size $<\lambda$. Finally, let
$\kappa=\left|\lg\left(\bar{d}_{\xx}\bar{d}\right)\right|^{+}+\theta$. 

Then for any $n<\omega$, there is a set $U_{n}\in\cal U$ , a sequence
$\sequence{\left(\bar{c}_{\alpha,n},\bar{d}_{\alpha,n}\right)}{\alpha\in U_{n}\cup\left\{ \lambda\right\} }$,
a type $r_{n}$ and a set $B_{n}\subseteq M$ with $\left|B_{n}\right|<\lambda$
such that the following holds:
\begin{enumerate}
\item For each $n<\omega$, $U_{n+1}\subseteq U_{n}$, $\xx_{n}=\left(M,B_{n},\bar{c}_{\lambda,n},\bar{d}_{\lambda,n},r_{n}\right)$
is a $\lambda$-tree-type decomposition, $\xx\leq\xx_{n}\leq\xx_{n+1}$
and $\bar{d}_{\xx}\bar{d}\trianglelefteq\bar{d}_{\lambda,n}$. Also,
$\lg\left(\bar{d}_{\lambda,n}\right),\lg\left(\bar{c}_{\lambda,n}\right)<\kappa$. 
\item For each $n<\omega$ and $\alpha\in U_{n}\cup\left\{ \lambda\right\} $,
$\bar{c}_{\alpha,n-1},\bar{d}_{\alpha,n-1}\init\bar{c}_{\alpha,n},\bar{d}_{\alpha,n}$,
and when $\alpha<\lambda$ they are in $M$, $\left(\bar{c}_{\lambda,n},\bar{d}_{\lambda,n},\bar{c}_{\alpha,n},\bar{d}_{\alpha,n}\right)\models r_{n}$
(so $r_{n}$ is increasing) and $\bar{c}_{\alpha,n}\bar{d}_{\alpha,n}$
realizes $\tp\left(\bar{c}_{\lambda,n}\bar{d}_{\lambda,n}/M_{\alpha}\right)$
(where $\bar{c}_{\alpha,-1},\bar{d}_{\alpha,-1}=\emptyset$). 
\item For each $n<\omega$ and $\alpha\in U_{n}$, $\tp\left(\bar{c}_{\lambda,n},\bar{d}_{\lambda,n},\bar{c}_{\alpha,n},\bar{d}_{\alpha,n}\right)$
contains $r_{\xx}$ (when restricted to the appropriate variables). 
\item For each $n<\omega$ and $\alpha\in U_{n}$, 
\[
\tp\left(\bar{d}_{\lambda,n}/\bar{c}_{\lambda,n}+\bar{d}_{\alpha,n+1}\right)\vdash\tp\left(\bar{d}_{\lambda,n}/\bar{c}_{\lambda,n}+\bar{c}_{\alpha,n}+\bar{d}_{\alpha,n}+M_{\alpha}\right).
\]

\end{enumerate}
\end{prop}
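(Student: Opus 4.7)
The data will be built by induction on $n<\omega$. The three tools are Corollary~\ref{cor:existence of tree type dec} (to enlarge $\bar{d}_{\lambda,n}$ while keeping the tree-type property), Corollary~\ref{cor:full-indiscernibles} (to extract $\cal U$-large sets on which auxiliary sequences become fully indiscernible over the relevant base), and Lemma~\ref{lem:average type on indiscernible sequence} (to realize a limit tuple in $\C_D$ that, after an automorphism, will play the role of $\bar{e}^{*}$).

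\textbf{Base case ($n=0$).} Apply Corollary~\ref{cor:existence of tree type dec} to $\xx$ and $\bar{d}$ to produce a tree-type decomposition $\xx_0\geq\xx$ with $\bar{d}_{\xx}\bar{d}\trianglelefteq\bar{d}_{\lambda,0}$; this also furnishes $B_0$. For each $\alpha<\lambda$ the self-solvability of $\xx$ at $M_\alpha$ yields $(\bar{c}_{\xx}^{\alpha},\bar{d}_{\xx}^{\alpha})\in M$ realizing $\tp(\bar{c}_{\xx}\bar{d}_{\xx}/M_\alpha)$ with $(\bar{c}_{\xx},\bar{d}_{\xx},\bar{c}_{\xx}^{\alpha},\bar{d}_{\xx}^{\alpha})\models r_{\xx}$; strong homogeneity of $\C_D$ combined with $D$-saturation of $M$ then lets us extend these to $(\bar{c}_{\alpha,0},\bar{d}_{\alpha,0})\in M$ realizing $\tp(\bar{c}_{\lambda,0}\bar{d}_{\lambda,0}/M_\alpha)$ with the prescribed initial segments. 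An application of Corollary~\ref{cor:full-indiscernibles} with base $\bar{c}_{\lambda,0}\bar{d}_{\lambda,0}$ produces $U_0\in\cal U$ on which $\sequence{\bar{c}_{\alpha,0}\bar{d}_{\alpha,0}}{\alpha\in U_0}$ is fully indiscernible; set $r_0:=\tp(\bar{c}_{\lambda,0}\bar{d}_{\lambda,0}\bar{c}_{\alpha,0}\bar{d}_{\alpha,0}/\emptyset)$ (constant in $\alpha\in U_0$ by indiscernibility). Clause~(3) is inherited from the prescribed initial segments.

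\textbf{Inductive step.} Given stage~$n$, for each $\alpha\in U_n$ invoke clause~(2) of Definition~\ref{def:tree type} with $A_\alpha:=M_\alpha\cup\bar{c}_{\alpha,n}\cup\bar{d}_{\alpha,n}$ to obtain $\bar{e}_\alpha\in M^{<\kappa}$ with $\tp(\bar{d}_{\lambda,n}/\bar{e}_\alpha+\bar{c}_{\lambda,n})\vdash\tp(\bar{d}_{\lambda,n}/A_\alpha+\bar{c}_{\lambda,n})$. Apply Corollary~\ref{cor:full-indiscernibles} with base $\bar{c}_{\lambda,n}\bar{d}_{\lambda,n}$ to extract $V\subseteq U_n$ in $\cal U$ on which $\sequence{\bar{c}_{\alpha,n}\bar{d}_{\alpha,n}\bar{e}_\alpha}{\alpha\in V}$ is fully indiscernible. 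Let $(\bar{c}^{**},\bar{d}^{**},\bar{e}^{**})\in\C_D$ realize $\average{\sequence{\bar{c}_{\alpha,n}\bar{d}_{\alpha,n}\bar{e}_\alpha}{\alpha\in V}}{M}{}$, which is a $D$-type by Lemma~\ref{lem:average type on indiscernible sequence}; by the average-type definition and full indiscernibility, $\tp(\bar{c}^{**}\bar{d}^{**}/M_\alpha)=\tp(\bar{c}_{\alpha,n}\bar{d}_{\alpha,n}/M_\alpha)=\tp(\bar{c}_{\lambda,n}\bar{d}_{\lambda,n}/M_\alpha)$ for every $\alpha<\lambda$, so $\tp(\bar{c}^{**}\bar{d}^{**}/M)=\tp(\bar{c}_{\lambda,n}\bar{d}_{\lambda,n}/M)$. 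Strong homogeneity of $\C_D$ over $M$ then yields $\bar{e}^{*}\in\C_D$ with $\tp(\bar{c}_{\lambda,n}\bar{d}_{\lambda,n}\bar{e}^{*}/M)=\tp(\bar{c}^{**}\bar{d}^{**}\bar{e}^{**}/M)$. A further application of Corollary~\ref{cor:existence of tree type dec} to $\xx_n$ with $\bar{e}^{*}$ produces $\xx_{n+1}\geq\xx_n$ tree-type with $\bar{d}_{\lambda,n+1}=\bar{d}_{\lambda,n}\concat\bar{e}^{*}$. Set $\bar{d}_{\alpha,n+1}:=\bar{d}_{\alpha,n}\concat\bar{e}_\alpha$ and, using $D$-saturation of $M$ together with the key type equality below, pick $\bar{c}_{\alpha,n+1}\in M$ extending $\bar{c}_{\alpha,n}$ so that $(\bar{c}_{\alpha,n+1},\bar{d}_{\alpha,n+1})\models\tp(\bar{c}_{\lambda,n+1}\bar{d}_{\lambda,n+1}/M_\alpha)$. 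A last application of Corollary~\ref{cor:full-indiscernibles} over $\bar{c}_{\lambda,n+1}\bar{d}_{\lambda,n+1}$ shrinks $V$ to $U_{n+1}$ on which the enriched sequence is fully indiscernible; take $r_{n+1}$ to be the resulting common type.

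\textbf{Main obstacle and clause~(4).} The heart of the construction is the equality $\tp(\bar{c}_{\lambda,n}\bar{d}_{\lambda,n}\bar{e}^{*}/M_\alpha)=\tp(\bar{c}_{\alpha,n}\bar{d}_{\alpha,n}\bar{e}_\alpha/M_\alpha)$ for $\alpha\in V$: the left-hand side equals the average type restricted to $M_\alpha$, which by full indiscernibility equals $\tp(\bar{c}_{\beta,n}\bar{d}_{\beta,n}\bar{e}_\beta/M_\alpha)$ for any $\beta\geq\alpha$ in $V$, and specializing $\beta=\alpha$ yields the right-hand side. This equality simultaneously makes $\bar{d}_{\alpha,n+1}=\bar{d}_{\alpha,n}\concat\bar{e}_\alpha$ a realization of $\tp(\bar{d}_{\lambda,n+1}/M_\alpha)$ and delivers clause~(4) directly from the defining property of $\bar{e}_\alpha$. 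The most delicate bookkeeping is the intermediate homogeneity step that replaces the abstract limit $(\bar{c}^{**},\bar{d}^{**})$ by the genuine $(\bar{c}_{\lambda,n},\bar{d}_{\lambda,n})$: this is legal only because $\tp(\bar{c}^{**}\bar{d}^{**}/M)=\tp(\bar{c}_{\lambda,n}\bar{d}_{\lambda,n}/M)$, an alignment that is itself assembled from clause~(2) at stage~$n$ together with the full indiscernibility just extracted.
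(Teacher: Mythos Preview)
Your proof is correct and follows the same inductive scheme as the paper. The one substantive difference is in how you produce $\bar{e}^{*}$ (the paper's $\bar{e}_{\lambda,m}$): the paper observes that full indiscernibility makes the types $\tp(\bar{d}_{\alpha,m}\bar{c}_{\alpha,m}\bar{e}_{\alpha,m}/M_\alpha)$ increase with $\alpha$, so their union is a $D$-type, and then realizes that union directly by a tuple whose first two blocks are $\bar{d}_{\lambda,m}\bar{c}_{\lambda,m}$ (legitimate since by clause~(2) these already realize the restriction, and homogeneity of $\C_D$ does the rest). Your route via the average type of Lemma~\ref{lem:average type on indiscernible sequence} followed by an automorphism over $M$ reaches the same conclusion; it is slightly less direct and invokes dependence of $D$ where the paper does not, but it is perfectly valid. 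One minor bookkeeping point you skipped: before applying Corollary~\ref{cor:full-indiscernibles} to the sequence $\sequence{\bar{c}_{\alpha,n}\bar{d}_{\alpha,n}\bar{e}_\alpha}{\alpha}$ you must first pass to a $\cal U$-large subset on which $\lg(\bar{e}_\alpha)$ is constant (there are $<\lambda$ possible lengths, so some length occurs on a set in $\cal U$).
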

\begin{proof}
The construction is by induction on $n$. 

Assume $n=0$. Let $\bar{d}_{\lambda,0}=\bar{d}_{\xx}\bar{d}$ and
let $\bar{c}_{\lambda,0}\in\C_{D}^{<\kappa}$, $B_{0}$ be such that
$\xx\leq\left(M,B_{0},\bar{d}_{\lambda,0},\bar{c}_{\lambda,0},r_{\xx}\right)$
is a $\lambda$-tree-type decomposition (which exists by Corollary
\ref{cor:existence of tree type dec}). For $\alpha<\lambda$, as
$\xx$ is a self-solvable decomposition, there are $\bar{c}_{\alpha,\xx},\bar{d}_{\alpha,\xx}$
in $M$ which realize $\tp\left(\bar{c}_{\xx},\bar{d}_{\xx}/M_{\alpha}\right)$
such that $\left(\bar{c}_{\xx},\bar{d}_{\xx},\bar{c}_{\alpha,\xx},\bar{d}_{\alpha,\xx}\right)\models r_{\xx}$. 

Go on to find $\bar{c}_{\alpha,\xx},\bar{d}_{\alpha,\xx}\init\bar{c}_{\alpha,0},\bar{d}_{\alpha,0}$
in $M$ which realize $\tp\left(\bar{c}_{\lambda,0}\bar{d}_{\lambda,0}/M_{\alpha}\right)$
(exists as $M$ is $D$-saturated). By Corollary \ref{cor:full-indiscernibles},
we can find $U_{0}$ such that $\sequence{\bar{c}_{\alpha,0}\bar{d}_{\alpha,0}}{\alpha\in U_{0}}$
is a fully indiscernible sequence over $\bar{c}_{\lambda,0}+\bar{d}_{\lambda,0}$.
Let $r_{0}=\tp\left(\bar{c}_{\lambda,0},\bar{d}_{\lambda,0},\bar{c}_{\alpha,0},\bar{d}_{\alpha,0}/\emptyset\right)$,
where $\alpha\in U_{0}$. 

Assume $n=m+1$. Note that $\kappa=\left|\lg\left(\bar{d}_{\lambda,m}\right)\right|^{+}+\theta$.
For $\alpha\in U_{m}$, let $\bar{e}_{\alpha,m}\in M^{<\kappa}$ be
such that $\tp\left(\bar{d}_{\lambda,m}/\bar{c}_{\lambda,m}+\bar{e}_{\alpha,m}\right)\vdash\tp\left(\bar{d}_{\lambda,m}/\bar{c}_{\lambda,m}+\bar{d}_{\alpha,m}+\bar{c}_{\alpha,m}+M_{\alpha}\right)$,
which exists as $\xx_{m}$ is a tree-type decomposition. As $\kappa<\lambda$,
by restricting $U_{m}$, we may assume that $\bar{e}_{\alpha,m}$
has a constant length, independent of $\alpha$. Further, let us assume
that $\sequence{\bar{d}_{\alpha,m}\bar{c}_{\alpha,m}\bar{e}_{\alpha,m}}{\alpha\in U_{m}}$
is fully indiscernible. Let $\bar{e}_{\lambda,m}$ be such that $\bar{d}_{\lambda,m}\bar{c}_{\lambda,m}\bar{e}_{\lambda,m}\models\bigcup\set{\tp\left(\bar{d}_{\alpha,m}\bar{c}_{\alpha,m}\bar{e}_{\alpha,m}/M_{\alpha}\right)}{\alpha\in U_{m}}$.
This is a type by full indiscernibility, and such a tuple can be found
in $\C_{D}$ since $\bar{d}_{\lambda,m}\bar{c}_{\lambda,m}$ already
realize this union when we restrict to the appropriate variables,
by point (2). 

Now we essentially repeat the case $n=0$, applying Corollary \ref{cor:existence of tree type dec}
with $\bar{d}_{0},\xx$ there being $\bar{e}_{\lambda,m},\xx_{m}$
to find $B_{n}$ and $\bar{c}_{\lambda,n}$, but now we want that
$\bar{c}_{\alpha,m}\init\bar{c}_{\alpha,n}$ and $\bar{d}_{\alpha,m}\bar{e}_{\alpha,m}=\bar{d}_{\alpha,n}$
for $\alpha\in U_{m}\cup\left\{ \lambda\right\} $, so we find these
tuples and find $U_{n}$ such that $\sequence{\bar{c}_{\alpha,n}\bar{d}_{\alpha,n}}{\alpha\in U_{n}}$
is fully indiscernible over $\bar{c}_{\lambda,n}\bar{d}_{\lambda,n}$
and we let $r_{n}=\tp\left(\bar{c}_{\lambda,m},\bar{d}_{\lambda,m},\bar{c}_{\alpha,n},\bar{d}_{\alpha,n}/\emptyset\right)$. 

(In fact, in the proof we did not  need full indiscernibility at any
stage. In the case $n=0$ and the last stage of the successor step,
we only needed that $\tp\left(\bar{c}_{\lambda,m},\bar{d}_{\lambda,m},\bar{c}_{\alpha,n},\bar{d}_{\alpha,n}/\emptyset\right)$
is constant, and in the construction of the $\bar{e}_{\lambda,m}$
we only needed that the types $\tp\left(\bar{d}_{\alpha,m}\bar{c}_{\alpha,m}\bar{e}_{\alpha,m}/M_{\alpha}\right)$
are increasing with $\alpha$.)
\end{proof}

\begin{cor}
\label{cor:existsence of self-solvable}Let $M$ be a $D$-saturated
model of cardinality $\lambda$, where $\lambda\geq\theta$ is measurable,
and let $\bar{d}\in\C_{D}^{<\lambda}$. Let $\xx$ be some $\lambda$-self-solvable
decomposition, possibly trivial. Then there exists some $\lambda$-self-solvable
decomposition $\xx\le\yy$ such that $\bar{d}_{\xx}\bar{d}\init\bar{d}_{\yy}$. \end{cor}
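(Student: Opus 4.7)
The plan is to apply Proposition \ref{prop:finding self solvable by chasing tail} with the given $M$, $\xx$, $\bar{d}$ (and with $M$ expressed as $\bigcup_{\alpha<\lambda}M_\alpha$, and $\cal U$ a normal $\lambda$-complete ultrafilter on $\lambda$), obtaining the data $U_n$, $\xx_n = (M, B_n, \bar{d}_{\lambda,n}, \bar{c}_{\lambda,n}, r_n)$, and $(\bar{c}_{\alpha,n}, \bar{d}_{\alpha,n})$ for $\alpha \in U_n$. I then take $\yy$ to be the $\omega$-union of the $\xx_n$'s:
\[
\yy = \left(M,\, \bigcup_{n<\omega} B_n,\, \bigcup_{n<\omega}\bar{d}_{\lambda,n},\, \bigcup_{n<\omega}\bar{c}_{\lambda,n},\, \bigcup_{n<\omega} r_n\right).
\]
By point (1) of the proposition, $\xx \le \yy$ and $\bar{d}_\xx\bar{d} \trianglelefteq \bar{d}_\yy$. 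It remains to verify that $\yy$ is a $\lambda$-tree-type decomposition and then that it is self-solvable.

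For the tree-type property, non-splitting of $\tp(\bar{c}_\yy/M)$ over $B_\yy$ is routine from non-splitting at every level. For the bound-witness clause, given $A \subseteq M$ of size $<\lambda$, I take $\bar{e}_A := \bigcup_n \bar{e}_{A,n}$, where $\bar{e}_{A,n} \in M^{<\kappa_n}$ witnesses the tree-type property of $\xx_n$ (with $\kappa_n = \theta + |\lg(\bar{d}_{\lambda,n})|^+$). The crucial point is that $\kappa_\yy = \theta + |\lg(\bar{d}_\yy)|^+$ is a regular uncountable cardinal (since $\theta$ is strongly compact hence regular, and the successor cardinal is regular), so this countable union remains of size $<\kappa_\yy$. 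Any formula over $A + \bar{c}_\yy$ involving finitely many coordinates of $\bar{d}_\yy$ lives at some finite level $n$, so the implication at level $n$ propagates to $\yy$.

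The main step is verifying self-solvability, i.e.\ clause (\ref{enu:The-strong tree}) of Definition \ref{def:self-solvable}. Given $A \subseteq M$ of size $<\lambda$, pick $\alpha_0 < \lambda$ with $A \subseteq M_{\alpha_0}$; since $\cal U$ is $\lambda$-complete and non-principal, $U := \bigcap_{n<\omega} U_n$ lies in $\cal U$ and is cofinal in $\lambda$, so I fix some $\alpha \in U$ with $\alpha > \alpha_0$. Setting $\bar{c}_A := \bigcup_n \bar{c}_{\alpha,n}$ and $\bar{d}_A := \bigcup_n \bar{d}_{\alpha,n}$ (both in $M^{<\lambda}$), clauses (\ref{enu:same length})--(\ref{enu:same type over small set}) follow directly from points (2) and (3) of the proposition applied level by level.

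The obstacle is clause (\ref{enu:The-strong tree}). To show $\tp(\bar{d}_\yy/\bar{c}_A+\bar{d}_A+\bar{c}_\yy) \vdash \tp(\bar{d}_\yy/A+\bar{c}_\yy+\bar{c}_A+\bar{d}_A)$, I reduce to a single formula $\varphi(\bar{x},\bar{b})$ with $\bar{x}$ indexing finitely many coordinates of $\bar{d}_\yy$ and $\bar{b}$ a finite tuple drawn from $A + \bar{c}_\yy + \bar{c}_A + \bar{d}_A$. By finite support, there exists $n$ such that the relevant coordinates of $\bar{d}_\yy$ lie in $\bar{d}_{\lambda,n}$, the $\bar{c}_\yy$-part of $\bar{b}$ in $\bar{c}_{\lambda,n}$, the $\bar{c}_A$-part in $\bar{c}_{\alpha,n}$, and the $\bar{d}_A$-part in $\bar{d}_{\alpha,n}$. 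Point (4) of the proposition at level $n$ gives
\[
\tp(\bar{d}_{\lambda,n}/\bar{c}_{\lambda,n}+\bar{d}_{\alpha,n+1}) \vdash \tp(\bar{d}_{\lambda,n}/\bar{c}_{\lambda,n}+\bar{c}_{\alpha,n}+\bar{d}_{\alpha,n}+M_\alpha),
\]
and since $\bar{d}_{\alpha,n+1} \trianglelefteq \bar{d}_A$, $\bar{c}_{\lambda,n} \trianglelefteq \bar{c}_\yy$ and $A \subseteq M_\alpha$, the truth value of $\varphi(\bar{d}_\yy,\bar{b})$ is already determined by $\tp(\bar{d}_\yy/\bar{c}_\yy+\bar{d}_A)$, which is contained in $\tp(\bar{d}_\yy/\bar{c}_A+\bar{d}_A+\bar{c}_\yy)$. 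This completes the verification that $\yy$ is self-solvable.
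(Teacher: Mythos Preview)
Your proof is correct and follows essentially the same route as the paper's: apply Proposition~\ref{prop:finding self solvable by chasing tail}, take the $\omega$-union to form $\yy$, and for each $A$ pick $\alpha\in\bigcap_n U_n$ with $A\subseteq M_\alpha$ and set $\bar c_A,\bar d_A$ to be the unions of the $\bar c_{\alpha,n},\bar d_{\alpha,n}$. Your finite-support argument for clause~(\ref{enu:The-strong tree}) is exactly what the paper means by ``this holds since formulas have finitely many variables''; you have simply spelled it out, and your separate verification of the tree-type property (which the paper leaves implicit, since self-solvable clause~(\ref{enu:The-strong tree}) already yields it with $\bar e_A=\bar c_A\bar d_A$) is a harmless extra step.
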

\begin{proof}
Write $M=\bigcup_{\alpha<\lambda}M_{\alpha}$ where $M_{\alpha}\subseteq M$
are of cardinality $<\lambda$ and the sequence is increasing and
continuous. Also choose some normal ultrafilter $\cal U$ on $\lambda$.
Now we apply Proposition \ref{prop:finding self solvable by chasing tail},
to find $U_{n}$, $B_{n}$, $r_{n}$ and $\sequence{\left(\bar{c}_{\alpha,n},\bar{d}_{\alpha,n}\right)}{\alpha\in U_{n}\cup\left\{ \lambda\right\} }$.
Let $\bar{d}_{\lambda}=\bigcup_{n<\omega}\bar{d}_{\lambda,n}$, $\bar{c}_{\lambda}=\bigcup_{n<\omega}\bar{c}_{\lambda,n}$,
$B=\bigcup_{n<\omega}B_{n}$ and $r=\bigcup_{n<\omega}r_{n}$ (note
that this is indeed a $D$-type). Also, let $U=\bigcap_{n<\omega}U_{n}\in\cal U$
(as $\cal U$ is $\lambda$-complete). 

Then $\left(M,B,\bar{d}_{\lambda},\bar{c}_{\lambda},r\right)$ is
a $\lambda$-self-solvable decomposition: first of all it is a tree-type
decomposition, as $\tp\left(\bar{c}_{\lambda}/M\right)$ does not
split over $B$. Also, $\kappa=\left|\lg\left(\bar{d}_{\xx}\bar{d}\right)\right|^{+}+\theta$
is regular of cofinality $>\aleph_{0}$, so $\lg\left(\bar{c}_{\lambda}\right)<\kappa=\left|\lg\left(\bar{d}_{\lambda}\right)\right|^{+}+\theta$.
For each $A\subseteq M$ of size $<\lambda$, there is some $\alpha\in U$
such that $M_{\alpha}$ contains $A$. Let $\bar{c}_{A},\bar{d_{A}}=\bigcup_{n<\omega}\bar{c}_{\alpha,n},\bigcup_{n<\omega}\bar{d}_{\alpha,n}$.
Then, it follows from point (2) in Proposition \ref{prop:finding self solvable by chasing tail}
that $\left(\bar{c}_{\lambda},\bar{d}_{\lambda},\bar{c}_{A},\bar{d}_{A}\right)\models r$
and that $\left(\bar{c}_{A}\bar{d}_{A}\right)$ realize $\tp\left(\bar{c}_{\lambda}\bar{d}_{\lambda}/A\right)$.
Also, note that $r_{\xx}\subseteq r$, $B_{\xx}\subseteq B$, $\bar{c}_{\xx},\bar{d}_{\xx}\init\bar{c}_{\lambda},\bar{d}_{\lambda}$. 

Finally, we must check that $\tp\left(\bar{d}_{\lambda}/\bar{c}_{A}+\bar{d}_{A}+\bar{c}_{\lambda}\right)\vdash\tp\left(\bar{d}_{\lambda}/A+\bar{c}_{\lambda}+\bar{c}_{A}+\bar{d}_{A}\right)$.
This holds since formulas have finitely many variables. 
\end{proof}

\section{\label{sec:Finding-a-good}Finding a good family}

In this section we will show that the family of $\lambda$-self-solvable
decompositions is a good family of $\lambda$-decompositions whenever
$\lambda>\theta$ is measurable (note that in that case $\lambda^{<\lambda}=\lambda$).
This will conclude the proof of Conjecture \ref{conj:The generic pair conjecture}
in this case. So let $\Ff$ be the family of $\lambda$-self-solvable
decompositions $\xx$ such that $M_{\xx}$ is $D$-saturated of cardinality
$\lambda$. Let us go over Definition \ref{def:good family}, and
prove that each clause is satisfied by $\Ff$. 
\begin{claim}
Points (\ref{enu:invariant under isom}), (\ref{enu:every model is saturated}),
(\ref{enu:non-empty}), (\ref{enu:enlarging}), (\ref{enu:enbase})
and (\ref{enu:iso-extension}) are satisfied by $\Ff$.\end{claim}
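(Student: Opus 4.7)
The plan is to dispatch each clause by invoking what has already been established, and to observe that all six listed properties reduce to immediate checks from the definitions in Section \ref{sub:Self-solvable-decomposition}, with the enlarging clause being the only one that required real work (and that work was precisely Corollary \ref{cor:existsence of self-solvable}).

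First, for (\ref{enu:invariant under isom}), I would observe that every clause in Definitions \ref{def:tree type} and \ref{def:self-solvable} is purely type-theoretic: being a $D$-model, non-splitting, the existence of the witnesses $\bar{e}_A$, and the existence of the ``solving'' pairs $(\bar{c}_A,\bar{d}_A)$ realizing $r_{\xx}$ with the containment condition (\ref{enu:The-strong tree}), are all preserved by any elementary map applied pointwise to all the ingredients. Hence the image of a $\lambda$-self-solvable decomposition under an isomorphism of decompositions is again one, and $\Ff$ is closed under isomorphism. Clause (\ref{enu:every model is saturated}) is built into the definition of $\Ff$, and (\ref{enu:non-empty}) is the observation made just before Proposition \ref{prop:finding self solvable by chasing tail} that the trivial decomposition is self-solvable (vacuously, since $\bar{c},\bar{d},B,r$ are empty).

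For (\ref{enu:enlarging}), I would simply invoke Corollary \ref{cor:existsence of self-solvable}: given $\xx\in\Ff$ and $\bar{d}\in\C_D^{<\lambda}$, that corollary produces a self-solvable $\yy\geq\xx$ with $\bar{d}_{\xx}\bar{d}\init\bar{d}_{\yy}$, and since $M_{\yy}=M_{\xx}$ remains $D$-saturated, $\yy\in\Ff$. For (\ref{enu:enbase}), I would check that enlarging $B_{\xx}$ to $B_{\xx}\cup\{b\}$ (still of cardinality $<\lambda$) preserves every defining property: condition (1) of Definition \ref{def:tree type}, non-splitting of $\tp(\bar{c}_{\xx}/M_{\xx})$, only gets easier when the base grows (if two tuples agree on a larger set $B'\supseteq B$ they agree on $B$); condition (2) of Definition \ref{def:tree type} involves only $\bar{d}_{\xx},\bar{c}_{\xx},M_{\xx}$ and is indifferent to $B_{\xx}$; and the self-solvability clauses (\ref{enu:same length})--(\ref{enu:The-strong tree}) of Definition \ref{def:self-solvable} do not mention $B_{\xx}$ at all. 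Hence the enlarged tuple remains in $\Ff$.

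Finally, for (\ref{enu:iso-extension}), as the Remark in Section \ref{sec:The-generic-pair} points out, this is a formal consequence of (\ref{enu:invariant under isom}). Given an isomorphism $f:\xx_1\to\xx_2$ of decompositions in $\Ff$ and $\yy_1\geq\xx_1$ in $\Ff$, I would extend $f$ to an automorphism $\hat{f}\in\Aut(\C_D)$ using the strong homogeneity of $\C_D$ (available because $|M_{\xx_1}\cup\bar{c}_{\xx_1}\cup\bar{d}_{\xx_1}|<\bar{\kappa}$), define $\yy_2:=\hat{f}(\yy_1)$, and note that $\hat{f}$ restricted to the ingredients of $\yy_1$ is the required extending isomorphism $\yy_1\to\yy_2$, while $\yy_2\in\Ff$ by clause (\ref{enu:invariant under isom}) and $\xx_2\leq\yy_2$ because $\leq$ is preserved by $\hat{f}$. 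The only conceivable obstacle in this whole claim is the verification in (\ref{enu:enbase}), but since each clause of the defining properties is visibly independent of $B_{\xx}$ (or monotone in it), there is nothing substantive to overcome.
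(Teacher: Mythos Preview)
Your proof is correct and follows exactly the paper's approach: the paper's own proof simply says ``Everything is clear, except (\ref{enu:enlarging}), which is exactly Corollary \ref{cor:existsence of self-solvable},'' and you have spelled out in detail why the remaining clauses are indeed clear. Your elaborations on (\ref{enu:invariant under isom}), (\ref{enu:enbase}), and (\ref{enu:iso-extension}) are accurate and match what the paper leaves implicit.
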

\begin{proof}
Everything is clear, except (\ref{enu:enlarging}), which is exactly
Corollary \ref{cor:existsence of self-solvable}.
\end{proof}
We now move on to point (\ref{enu:union}), but for this we will need
the following lemma. 
\begin{lem}
\label{lem:finding indiscernibles} Suppose that $\left(I,<\right)$
is some linearly ordered set. Let $\sequence{\bar{a}_{i}}{i\in I}$
be a sequence of tuples of the same length from $\C_{D}$, and let
$B\subseteq\C_{D}$ be some set. Assume the following conditions.

\begin{enumerate}
\item For all $i\in I$, $\bar{a}_{i}=\bar{c}_{i}\bar{d}_{i}$.
\item For all $i\in I$, $\tp\left(\bar{a}_{i}/B_{i}\right)$ is increasing
with $i$, where \textbf{$B_{i}=B\cup\set{\bar{a}_{j}}{j<i}$.}
\item For all $i\in I$, $\tp\left(\bar{c}_{i}/B_{i}\right)$ does not split
over $B$.
\item For every $j<i$ in $I$, $\tp\left(\bar{d}_{i}/\bar{c}_{i}+\bar{a}_{j}\right)\vdash\tp\left(\bar{d}_{i}/\bar{c}_{i}+\bar{a}_{j}+B_{j}\right)$. 
\item For every $i_{1}<i_{2}$, $j_{1}<j_{2}$ from $I$, $\tp\left(\bar{a}_{i_{2}}\bar{a}_{i_{1}}/\emptyset\right)=\tp\left(\bar{a}_{j_{2}}\bar{a}_{j_{1}}/\emptyset\right)$.
\end{enumerate}
Then $\sequence{\bar{a}_{i}}{i\in I}$ is indiscernible over $B$.\end{lem}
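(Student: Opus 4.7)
The plan is to prove by induction on $n\ge 1$ that for every pair of strictly increasing $n$-tuples $i_{1}<\cdots<i_{n}$ and $j_{1}<\cdots<j_{n}$ in $I$, the types $\tp(\bar{a}_{i_{1}}\ldots\bar{a}_{i_{n}}/B)$ and $\tp(\bar{a}_{j_{1}}\ldots\bar{a}_{j_{n}}/B)$ coincide. The base case $n=1$ is immediate from hypothesis (2): for $i<j$ in $I$, $\bar{a}_{j}$ realizes $\tp(\bar{a}_{i}/B_{i})$, and since $B\subseteq B_{i}$ we get $\bar{a}_{j}\equiv_{B}\bar{a}_{i}$.

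For the inductive step I will split the argument into two moves. \emph{Move 1} (varying only the final index): for fixed $i_{1}<\cdots<i_{n-1}$ and any $k,k'\in I$ with $k,k'>i_{n-1}$, I claim $\bar{a}_{i_{1}}\ldots\bar{a}_{i_{n-1}}\bar{a}_{k}\equiv_{B}\bar{a}_{i_{1}}\ldots\bar{a}_{i_{n-1}}\bar{a}_{k'}$. Indeed, assuming $k<k'$, hypothesis (2) says $\bar{a}_{k'}$ realizes $\tp(\bar{a}_{k}/B_{k})$, and since $\bar{a}_{i_{1}},\ldots,\bar{a}_{i_{n-1}}\in B_{k}$ the equivalence follows. \emph{Move 2} (varying the earlier indices while keeping a common final index): given $i_{1}<\cdots<i_{n-1}<k$ and $j_{1}<\cdots<j_{n-1}<k$, I show $\bar{a}_{i_{1}}\ldots\bar{a}_{i_{n-1}}\bar{a}_{k}\equiv_{B}\bar{a}_{j_{1}}\ldots\bar{a}_{j_{n-1}}\bar{a}_{k}$. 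By the induction hypothesis, $\bar{a}_{i_{1}}\ldots\bar{a}_{i_{n-1}}\equiv_{B}\bar{a}_{j_{1}}\ldots\bar{a}_{j_{n-1}}$, and both tuples lie in $B_{k}$; hence condition (3), nonsplitting of $\tp(\bar{c}_{k}/B_{k})$ over $B$, upgrades this to $\bar{c}_{k}\bar{a}_{i_{1}}\ldots\bar{a}_{i_{n-1}}\equiv_{B}\bar{c}_{k}\bar{a}_{j_{1}}\ldots\bar{a}_{j_{n-1}}$.

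Use homogeneity of $\C_{D}$ to pick an automorphism $\sigma$ fixing $B\bar{c}_{k}$ pointwise and sending $\bar{a}_{i_{l}}\mapsto\bar{a}_{j_{l}}$ for $l<n$; set $\bar{d}'=\sigma(\bar{d}_{k})$. Then $\bar{d}'\bar{c}_{k}\bar{a}_{j_{1}}\ldots\bar{a}_{j_{n-1}}\equiv_{B}\bar{d}_{k}\bar{c}_{k}\bar{a}_{i_{1}}\ldots\bar{a}_{i_{n-1}}$, so it suffices to show $\bar{d}'\equiv_{B\bar{c}_{k}\bar{a}_{j_{1}}\ldots\bar{a}_{j_{n-1}}}\bar{d}_{k}$. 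Condition (4) applied with $i=k$ and $j=j_{n-1}$ gives $\tp(\bar{d}_{k}/\bar{c}_{k}\bar{a}_{j_{n-1}})\vdash\tp(\bar{d}_{k}/\bar{c}_{k}\bar{a}_{j_{n-1}}B_{j_{n-1}})$, and since $B\cup\{\bar{a}_{j_{1}},\ldots,\bar{a}_{j_{n-2}}\}\subseteq B_{j_{n-1}}$ the task collapses to verifying $\bar{d}'\equiv_{\bar{c}_{k}\bar{a}_{j_{n-1}}}\bar{d}_{k}$. Now $\sigma$ yields $\bar{d}'\bar{c}_{k}\bar{a}_{j_{n-1}}\equiv\bar{d}_{k}\bar{c}_{k}\bar{a}_{i_{n-1}}$ over $\emptyset$, and condition (5) furnishes $\bar{c}_{k}\bar{d}_{k}\bar{a}_{i_{n-1}}\equiv\bar{c}_{k}\bar{d}_{k}\bar{a}_{j_{n-1}}$ over $\emptyset$; composing delivers $\bar{d}'\equiv_{\bar{c}_{k}\bar{a}_{j_{n-1}}}\bar{d}_{k}$ and completes Move~2.

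To close the induction for arbitrary $\bar{i},\bar{j}$, pick $K\in\{i_{n},j_{n}\}$ with $K>\max(i_{n-1},j_{n-1})$ (one of the two always works since $i_{n}>i_{n-1}$ and $j_{n}>j_{n-1}$), apply Move~1 to each tuple to replace its last coordinate by $K$, and then invoke Move~2 with common index $K$. The main obstacle, in my view, is the Move~2 step: hypotheses (3)--(5) each control at most one of the earlier tuples, so what makes everything fit together is the clean observation that $B_{j_{n-1}}$ absorbs both $B$ and $\bar{a}_{j_{1}},\ldots,\bar{a}_{j_{n-2}}$, so that the forcing statement (4) used with the single parameter $\bar{a}_{j_{n-1}}$ is already strong enough to pin down $\bar{d}_{k}$'s type over the full parameter set.
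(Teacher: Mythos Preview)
Your proof is correct and follows essentially the same approach as the paper's: induction on $n$, with condition (2) handling the change of the last index, nonsplitting (3) plus the induction hypothesis handling $\bar{c}_k$, and the isolation condition (4) together with the $2$-type invariance (5) handling $\bar{d}_k$. The only cosmetic differences are that the paper does not split the step into two separate moves (it assumes without loss of generality $i_{n+1}\le j_{n+1}$ and runs both at once) and that it phrases the key step purely in terms of the implication $r(\bar{x}_{\bar d},\bar c,\bar a)\vdash\varphi$ rather than passing through an explicit automorphism $\sigma$ of $\C_D$.
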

\begin{proof}
We prove by induction on $n$ that $\sequence{\bar{a}_{i}}{i\in I}$
is an $n$-indiscernible sequence over $B$. 

For $n=1$ it follows from (2).

Now suppose that $\sequence{\bar{a}_{i}}{i\in I}$ is $n$-indiscernible
over $B$. Let $i_{1}<\ldots<i_{n}<i_{n+1}\in I$ and $j_{1}<\ldots<j_{n}<j_{n+1}\in I$
be such that, without loss of generality, $i_{n+1}\leq j_{n+1}$.
By (2), we know that $\bar{a}_{i_{1}}\ldots\bar{a}_{i_{n}}\bar{a}_{i_{n+1}}\equiv_{B}\bar{a}_{i_{1}}\ldots\bar{a}_{i_{n}}\bar{a}_{j_{n+1}}$.
By (3) and the induction hypothesis, we know that $\bar{a}_{i_{1}}\ldots\bar{a}_{i_{n}}\bar{c}_{j_{n+1}}\equiv_{B}\bar{a}_{j_{1}}\ldots\bar{a}_{j_{n}}\bar{c}_{j_{n+1}}$.
Combining, we get that $\bar{a}_{i_{1}}\ldots\bar{a}_{i_{n}}\bar{c}_{i_{n+1}}\equiv_{B}\bar{a}_{j_{1}}\ldots\bar{a}_{j_{n}}\bar{c}_{j_{n+1}}$. 

Suppose that $\varphi\left(\bar{d}_{i_{n+1}},\bar{c}_{i_{n+1}},\bar{a}_{i_{n}},\ldots\bar{a}_{i_{1}},\bar{b}\right)$
holds where $\bar{b}$ is a finite tuple from $B$. Let $r\left(\bar{x}_{\bar{d}},\bar{x}_{\bar{c}},\bar{x}_{\bar{a}}\right)=\tp\left(\bar{d}_{i_{n+1}},\bar{c}_{i_{n+1}},\bar{a}_{i_{n}}/\emptyset\right)$.
By (4), $r\left(\bar{x}_{\bar{d}},\bar{c}_{i_{n+1}},\bar{a}_{i_{n}}\right)\vdash\varphi\left(\bar{x}_{\bar{d}},\bar{c}_{i_{n+1}},\bar{a}_{i_{n}},\ldots\bar{a}_{i_{1}},\bar{b}\right)$.
Applying the last equation, we get that $r\left(\bar{x}_{\bar{d}},\bar{c}_{j_{n+1}},\bar{a}_{j_{n}}\right)\vdash\varphi\left(\bar{x}_{\bar{d}},\bar{c}_{j_{n+1}},\bar{a}_{j_{n}},\ldots\bar{a}_{j_{1}},\bar{b}\right).$
By (5), $r=\tp\left(\bar{d}_{j_{n+1}},\bar{c}_{j_{n+1}},\bar{a}_{j_{n}}/\emptyset\right)$,
so $\bar{d}_{j_{n+1}}$ satisfies the left hand side, and so also
the right hand side, and so $\varphi\left(\bar{d}_{j_{n+1}},\bar{c}_{j_{n+1}},\bar{a}_{j_{n}},\ldots\bar{a}_{j_{1}},\bar{b}\right)$
holds and we are done. \end{proof}
\begin{cor}
\label{cor:finding indiscernibles}Suppose that $\xx\in\Ff$, and
let $M=M_{\xx}$. Let $B\supseteq B_{\xx}$ be any subset of $M$
of cardinality $<\lambda$, and let $\alpha\leq\lambda$. For $i<\alpha$,
let $\bar{a}_{i}$ be such that $\bar{a}_{0}=\bar{c}_{B}\bar{d}_{B}$
(see Definition \ref{def:self-solvable}), and for $i>0$, $\bar{a}_{i}=\bar{c}_{B_{i}}\bar{d}_{B_{i}}$
where $B_{i}=B\cup\set{\bar{a}_{j}}{j<i}$. Then $\sequence{\bar{a}_{i}}{i<\alpha}\concat\left\langle \bar{c}_{\xx}\bar{d}_{\xx}\right\rangle $
is an indiscernible sequence over $B$.\end{cor}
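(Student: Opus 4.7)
The plan is to apply Lemma \ref{lem:finding indiscernibles} directly. I index the sequence by $I=\alpha+1$, taking $\bar{a}_\alpha:=\bar{c}_\xx\bar{d}_\xx$, and split each tuple via $\bar{c}_i=\bar{c}_{B_i}$, $\bar{d}_i=\bar{d}_{B_i}$ for $i<\alpha$ and $\bar{c}_\alpha=\bar{c}_\xx$, $\bar{d}_\alpha=\bar{d}_\xx$. Clause (1) of the lemma is then immediate, and the central observation driving the rest is that by Definition \ref{def:self-solvable}(\ref{enu:same type over small set}), each $\bar{a}_i$ with $i<\alpha$ lies in $M$ and realizes $\tp(\bar{c}_\xx\bar{d}_\xx/B_i)$; in particular $B_i\subseteq M$ for every $i\in I$ (using that $|B|<\lambda$, $\lambda$ regular, and each $\bar{a}_k$ is of length $<\lambda$).

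Clauses (2), (3) and (5) then follow formally. For (2), both $\bar{a}_i$ and $\bar{a}_j$ (for $i\leq j$ in $I$) realize $\tp(\bar{c}_\xx\bar{d}_\xx/B_i)$, so $\bar{a}_j\equiv_{B_i}\bar{a}_i$. For (3), since $\xx$ is a $\lambda$-tree-type decomposition, $\tp(\bar{c}_\xx/M)$ does not split over $B_\xx\subseteq B$, so its restriction $\tp(\bar{c}_\xx/B_i)$ does not split over $B$; and $\tp(\bar{c}_i/B_i)=\tp(\bar{c}_\xx/B_i)$. For (5), Definition \ref{def:self-solvable}(\ref{enu:realize r}) forces $\tp(\bar{c}_\xx\bar{d}_\xx\,\bar{a}_j/\emptyset)=r_\xx$ for every $j<\alpha$, and an elementary map sending $\bar{a}_{i_2}\mapsto\bar{c}_\xx\bar{d}_\xx$ fixing $B_{i_2}\ni\bar{a}_{i_1}$ then also forces $\tp(\bar{a}_{i_2}\bar{a}_{i_1}/\emptyset)=r_\xx$ for any $i_1<i_2<\alpha$.

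The only genuinely delicate clause is (4), which is the condition that packages the self-solvability. When $i=\alpha$ and $j<\alpha$ the required implication is exactly Definition \ref{def:self-solvable}(\ref{enu:The-strong tree}) applied with $A=B_j$, after observing that $\bar{a}_j=\bar{c}_{B_j}\bar{d}_{B_j}$. When $j<i<\alpha$ I reduce to this case by choosing an elementary map sending $(\bar{c}_i,\bar{d}_i)\mapsto(\bar{c}_\xx,\bar{d}_\xx)$ fixing $B_i$; because $B_j\cup\{\bar{a}_j\}\subseteq B_i$, this map fixes $B_j$ and $\bar{a}_j$, so it transports the statement
\[
\tp(\bar{d}_\xx/\bar{c}_\xx+\bar{a}_j)\vdash\tp(\bar{d}_\xx/\bar{c}_\xx+\bar{a}_j+B_j)
\]
to the one required for the pair $(i,j)$. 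No ingredient beyond Definition \ref{def:self-solvable} and Lemma \ref{lem:finding indiscernibles} is needed; the only bookkeeping is the chain of inclusions $B_j\cup\{\bar{a}_j\}\subseteq B_i\subseteq M$.
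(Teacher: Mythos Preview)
Your proof is correct and follows essentially the same route as the paper's: both apply Lemma~\ref{lem:finding indiscernibles} with $I=\alpha+1$ and $\bar a_\alpha=\bar c_\xx\bar d_\xx$, and verify clauses (1)--(5) in the same way. The only cosmetic difference is that for clause~(4) you transport the implication via an automorphism fixing $B_i$ while the paper uses one fixing the smaller set $B_{j+1}$; either works, and your choice is marginally cleaner.
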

\begin{proof}
Apply Lemma \ref{lem:finding indiscernibles} with $I=\alpha+1$ (so
that $\bar{a}_{\alpha}=\bar{c}_{\xx}\bar{d}_{\xx}$). Let us check
that the conditions there hold. (1) is obvious. (2) holds as $\tp\left(\bar{a}_{i}/B_{i}\right)=\tp\left(\bar{c}_{\xx}\bar{d}_{\xx}/B_{i}\right)\supseteq\tp\left(\bar{c}_{\xx}\bar{d_{\xx}}/B_{j}\right)$
when $\alpha+1>i\geq j$. (3) holds as $\tp\left(\bar{c}_{\xx}/B_{i}\right)$
does not split over $B$, so the same is true for $\bar{c}_{i}$.
(5) holds because for $i_{1}<i_{2}<\alpha+1$, 
\begin{align*}
\tp\left(\bar{a}_{i_{2}}\bar{a}_{i_{1}}/\emptyset\right) & =\tp\left(\bar{c}_{\xx}\bar{d}_{\xx}\bar{c}_{i_{1}}\bar{d}_{i_{1}}/\emptyset\right)=r_{\xx}\\
 & =\tp\left(\bar{c}_{\xx}\bar{d}_{\xx}\bar{c}_{j_{1}}\bar{d}_{j_{1}}/\emptyset\right)=\tp\left(\bar{a}_{j_{2}}\bar{a}_{j_{1}}/\emptyset\right).
\end{align*}

Finally, (4) holds because $\tp\left(\bar{d}_{\xx}/\bar{c}_{\xx}+\bar{a}_{j}\right)\vdash\tp\left(\bar{d}_{\xx}/\bar{c}_{\xx}+\bar{a}_{j}+B_{j}\right)$,
and as $\bar{c}_{\xx}\bar{d}_{\xx}\equiv_{B_{j+1}}\bar{c}_{B_{i}}\bar{d}_{B_{i}}=\bar{a}_{i}$,
we can replace $\bar{c}_{\xx}\bar{d}_{\xx}$ by $\bar{c}_{B_{i}}\bar{d}_{B_{i}}$
in this implication by applying an automorphism of $\C_{D}$. \end{proof}
\begin{lem}
\label{lem:restriction is ok}Suppose that $\xx_{1}\leq\xx_{2}$ are
two $\lambda$-decompositions from $\Ff$. Then for every subset $A$
of $M$ of size $<\lambda$ containing $B_{\xx_{1}}$, and for any
choice of $\bar{c}_{A},\bar{d}_{A}$ which we get when we apply Definition
\ref{def:self-solvable} on $\xx_{2}$, their restrictions to $\lg\left(\bar{c}_{\xx_{1}}\right),\lg\left(\bar{d}_{\xx_{1}}\right)$
satisfy all the conditions in Definition \ref{def:self-solvable}.\end{lem}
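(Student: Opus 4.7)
My plan is to verify each of (1)--(4) of Definition \ref{def:self-solvable} for $\xx_1$ with $\bar{c}'_A := \bar{c}_A\upharpoonright\lg(\bar{c}_{\xx_1})$ and $\bar{d}'_A := \bar{d}_A\upharpoonright\lg(\bar{d}_{\xx_1})$ playing the role of ``$\bar{c}_A,\bar{d}_A$'' for $\xx_1$.

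Conditions (1)--(3) follow directly by restriction. Condition (1) is immediate from the definitions of $\bar{c}'_A,\bar{d}'_A$. For (2), since $\xx_1\le\xx_2$ entails that $r_{\xx_1}$ is the restriction of $r_{\xx_2}$ to the variables indexed by the $\xx_1$-ingredients, projecting the realization $(\bar{c}_{\xx_2},\bar{d}_{\xx_2},\bar{c}_A,\bar{d}_A)\models r_{\xx_2}$ to those variables yields $(\bar{c}_{\xx_1},\bar{d}_{\xx_1},\bar{c}'_A,\bar{d}'_A)\models r_{\xx_1}$. For (3), the analogous projection of $(\bar{c}_A,\bar{d}_A)\models\tp(\bar{c}_{\xx_2}\bar{d}_{\xx_2}/A)$ to the initial lengths gives $(\bar{c}'_A,\bar{d}'_A)\models\tp(\bar{c}_{\xx_1}\bar{d}_{\xx_1}/A)$.

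The substantive condition is (4): $\tp(\bar{d}_{\xx_1}/\bar{c}'_A+\bar{d}'_A+\bar{c}_{\xx_1})\vdash\tp(\bar{d}_{\xx_1}/A+\bar{c}_{\xx_1}+\bar{c}'_A+\bar{d}'_A)$, read semantically in $\C_D^{<\lambda}$ as in Definition \ref{def:tree type}. The plan is to reduce it to (4) for $\xx_2$. Given any $\bar{e}_1\in\C_D^{<\lambda}$ with $\bar{e}_1\equiv_{\bar{c}'_A\bar{d}'_A\bar{c}_{\xx_1}}\bar{d}_{\xx_1}$, I would produce some $\bar{e}^*\in\C_D^{<\lambda}$ of length $\lg(\bar{d}_{\xx_2})-\lg(\bar{d}_{\xx_1})$ such that $\bar{e}:=\bar{e}_1\bar{e}^*$ realizes $\tp(\bar{d}_{\xx_2}/\bar{c}_A+\bar{d}_A+\bar{c}_{\xx_2})$ in $\C_D$. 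Once this is done, condition (4) for $\xx_2$ forces $\bar{e}\equiv_{A\bar{c}_{\xx_2}\bar{c}_A\bar{d}_A}\bar{d}_{\xx_2}$, whose projection to the initial $\lg(\bar{d}_{\xx_1})$ coordinates is exactly the desired $\bar{e}_1\equiv_{A\bar{c}_{\xx_1}\bar{c}'_A\bar{d}'_A}\bar{d}_{\xx_1}$.

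The main obstacle is producing $\bar{e}^*$: our hypothesis on $\bar{e}_1$ only concerns the small parameter set $\bar{c}'_A\bar{d}'_A\bar{c}_{\xx_1}$, whereas the target equivalence is over the larger $\bar{c}_A\bar{d}_A\bar{c}_{\xx_2}$. To bridge this gap I would exploit that $r_{\xx_2}$ is a complete type over $\emptyset$, so it pins down the type of the ``extra'' tuple $(\bar{c}^*,\bar{d}^*,\bar{c}^*_A,\bar{d}^*_A)$ (where $\bar{c}_{\xx_2}=\bar{c}_{\xx_1}\bar{c}^*$, etc.) over the initial tuple $(\bar{c}_{\xx_1},\bar{d}_{\xx_1},\bar{c}'_A,\bar{d}'_A)$. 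By strong homogeneity of $\C_D$, this data can be transported along an automorphism of $\C_D$ fixing $\bar{c}'_A\bar{d}'_A\bar{c}_{\xx_1}$ and sending $\bar{d}_{\xx_1}\mapsto\bar{e}_1$, producing a candidate for $\bar{e}^*$ in $\C_D^{<\lambda}$. The careful point is then to align the resulting shifted data with the original parameters $\bar{c}_A,\bar{d}_A,\bar{c}_{\xx_2}$ so that $\bar{e}$ indeed sits over them as $\bar{d}_{\xx_2}$ does; this will be done by passing through the $A$-conjugacy $(\bar{c}_A,\bar{d}_A)\equiv_A(\bar{c}_{\xx_2},\bar{d}_{\xx_2})$ supplied by (3) for $\xx_2$, and using $D$-saturation of $\C_D$ to realize the relevant $D$-types.
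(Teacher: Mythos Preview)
Your verification of (1)--(3) is fine. The gap is in your plan for (4). To invoke (4) for $\xx_2$ you must first produce $\bar e=\bar e_1\bar e^*$ with $\bar e\equiv_{\bar c_A\bar d_A\bar c_{\xx_2}}\bar d_{\xx_2}$; in particular you would need $\bar e_1\equiv_{\bar c_A\bar d_A\bar c_{\xx_2}}\bar d_{\xx_1}$. But your hypothesis only gives $\bar e_1\equiv_{\bar c'_A\bar d'_A\bar c_{\xx_1}}\bar d_{\xx_1}$, and the extra coordinates $\bar c^*,\bar c_A^*,\bar d_A^*$ may well distinguish $\bar e_1$ from $\bar d_{\xx_1}$. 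Transporting by an automorphism $\sigma$ fixing $\bar c'_A\bar d'_A\bar c_{\xx_1}$ does not help: $\sigma$ carries $\bar c_A,\bar d_A,\bar c_{\xx_2}$ to new tuples $\tilde c_A,\tilde d_A,\tilde c$, and aligning these back with the originals over $\bar e_1$ is precisely the equivalence you lack. The $A$-conjugacy $(\bar c_A,\bar d_A)\equiv_A(\bar c_{\xx_2},\bar d_{\xx_2})$ is of no use here since $\sigma$ need not fix $A$ (or $M$), so $\tilde c_A,\tilde d_A$ have no a priori relation to $A$.

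The paper avoids this trap by not reducing to (4) for $\xx_2$ at all. Instead it uses that $\xx_1$ itself lies in $\Ff$: pick witnesses $\bar c''_A,\bar d''_A\in M$ for $\xx_1$ from Definition~\ref{def:self-solvable}, and show that $\bar d_{\xx_1}\bar c_{\xx_1}\bar c''_A\bar d''_A\equiv_A\bar d_{\xx_1}\bar c_{\xx_1}\bar c'_A\bar d'_A$. This is done in two steps: first $\bar c''_A\bar d''_A\equiv_A\bar c'_A\bar d'_A$ by (3), which upgrades to $\bar c_{\xx_1}\bar c''_A\bar d''_A\equiv_A\bar c_{\xx_1}\bar c'_A\bar d'_A$ since $\tp(\bar c_{\xx_1}/M)$ does not split over $B_{\xx_1}\subseteq A$; then one adds $\bar d_{\xx_1}$ by noting that (4) for $\xx_1$ with the good witnesses gives $r_{\xx_1}(\bar c_{\xx_1},\bar x,\bar c''_A,\bar d''_A)\vdash\varphi(\bar x,\bar c_{\xx_1},\bar c''_A,\bar d''_A,\bar a)$ for any $\varphi$ holding of $\bar d_{\xx_1}$, transports this implication across the previous $\equiv_A$, and observes that $\bar d_{\xx_1}$ realizes $r_{\xx_1}(\bar c_{\xx_1},\cdot,\bar c'_A,\bar d'_A)$ because $r_{\xx_1}\subseteq r_{\xx_2}$. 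The missing idea in your attempt is exactly this: you must exploit self-solvability of $\xx_1$, not only of $\xx_2$.
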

\begin{proof}
Denote these restrictions by $\bar{c}_{A}',\bar{d}_{A}'$. As $r_{\xx_{1}}\subseteq r_{\xx_{2}}$,
we get Clause (\ref{enu:realize r}) of Definition \ref{def:self-solvable}
immediately. Clause (\ref{enu:same type over small set}) is also
clear, so we are left with (\ref{enu:The-strong tree}). Since $\xx_{1}\in\Ff$,
there are some $\bar{c}_{A}'',\bar{d}_{A}''$ in $M$ in the same
length as $\lg\left(\bar{c}_{\xx_{1}}\right),\lg\left(\bar{d}_{\xx_{1}}\right)$,
which we get when applying Definition \ref{def:self-solvable} on
$\xx_{1}$. It is enough to show that $\bar{d}_{\xx_{1}}\bar{c}_{\xx_{1}}\bar{c}_{A}''\bar{d}_{A}''\equiv_{A}\bar{d}_{\xx_{1}}\bar{c}_{\xx_{1}}\bar{c}_{A}'\bar{d}_{A}'$.
Note first that $\bar{c}_{A}''\bar{d}_{A}''\equiv_{A}\bar{c}_{A}'\bar{d}_{A}'$
by (\ref{enu:same type over small set}), and as $\tp\left(\bar{c}_{\xx_{1}}/M\right)$
does not split over $A$, we also get $\bar{c}_{\xx_{1}}\bar{c}_{A}''\bar{d}_{A}''\equiv_{A}\bar{c}_{\xx_{1}}\bar{c}_{A}'\bar{d}_{A}'$.
So suppose that $\mathfrak{C}_{D}\models\varphi\left(\bar{d}_{\xx_{1}},\bar{c}_{\xx_{1}},\bar{c}_{A}'',\bar{d}_{A}'',\bar{a}\right)$
where $\bar{a}$ is a finite tuple from $A$. By (\ref{enu:The-strong tree})
and (\ref{enu:realize r}), $r_{\xx_{1}}\left(\bar{c}_{\xx_{1}},\bar{x}_{\bar{d}_{\xx_{1}}},\bar{c}_{A}'',\bar{d}_{A}''\right)\vdash\varphi\left(\bar{x}_{\bar{d}_{\xx_{1}}},\bar{c}_{\xx_{1}},\bar{c}_{A}'',\bar{d}_{A}'',\bar{a}\right)$,
and applying the last equation, we get that $r_{\xx_{1}}\left(\bar{c}_{\xx_{1}},\bar{x}_{\bar{d}_{\xx_{1}}},\bar{c}_{A}',\bar{d}_{A}'\right)\vdash\varphi\left(\bar{x}_{\bar{d}_{\xx_{1}}},\bar{c}_{\xx_{1}},\bar{c}_{A}',\bar{d}_{A}',\bar{a}\right)$,
but as $\bar{d}_{\xx_{1}}$ satisfies the left hand side (because
$r_{\xx_{1}}\subseteq r_{\xx_{2}}$), we are done. \end{proof}
\begin{thm}
\label{thm:union} Suppose $\delta<\lambda$ is a limit ordinal. Let
$\sequence{\xx_{j}}{j<\delta}$ be an increasing sequence of decompositions
from $\Ff$. Then $\xx=\sup_{j<\delta}\xx_{j}\in\Ff$. Hence point
(\ref{enu:union}) of Definition \ref{def:good family} is satisfied
by $\Ff$. \end{thm}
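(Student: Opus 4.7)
\emph{Outline.} The plan is to check that $\xx:=\sup_{j<\delta}\xx_j$ meets every clause of Definitions \ref{def:tree type} and \ref{def:self-solvable}. Since $\delta<\lambda$ with $\lambda$ regular, and the model $M:=M_{\xx_0}$ is unchanged along the chain, each of $B_\xx=\bigcup_j B_{\xx_j}$, $\bar{c}_\xx=\bigcup_j \bar{c}_{\xx_j}$, and $\bar{d}_\xx=\bigcup_j \bar{d}_{\xx_j}$ is small enough (and in particular $\lg(\bar{c}_\xx)<\kappa=\theta+|\lg(\bar{d}_\xx)|^{+}$), and $r_\xx=\bigcup_j r_{\xx_j}$ is a complete $D$-type over $\emptyset$. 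Both tree-type clauses---non-splitting of $\tp(\bar{c}_\xx/M)$ over $B_\xx$ and the existence of a witness $\bar{e}_A\in M^{<\kappa}$ for each $A$ of size $<\lambda$---follow from finite character of formulas, by gluing the witnesses provided by each $\xx_j$.

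The heart of the argument is to verify self-solvability. Fix $A\supseteq B_\xx$ with $|A|<\lambda$. I would construct $(\bar{c}_A,\bar{d}_A)\in M$ by transfinite recursion on $j\leq\delta$, producing at stage $j$ a pair $(\bar{c}_A^j,\bar{d}_A^j)\in M$ that witnesses Definition \ref{def:self-solvable} for $\xx_j$ relative to $A$, and satisfying the coherence condition $(\bar{c}_A^i,\bar{d}_A^i)\init(\bar{c}_A^j,\bar{d}_A^j)$ for $i<j$. The final pair $(\bar{c}_A^\delta,\bar{d}_A^\delta)$ then supplies the required witness for $\xx$ itself. At limit stages of this recursion, unions do the job: clauses (\ref{enu:same length})--(\ref{enu:same type over small set}) of Definition \ref{def:self-solvable} are transparent from the chain, and clause (\ref{enu:The-strong tree}) passes to the limit by finite character of formulas.

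The successor step is the crux. Given $(\bar{c}_A^i,\bar{d}_A^i)\in M$ witnessing self-solvability for $\xx_i$, apply self-solvability of $\xx_{i+1}$ to obtain some candidate $(\bar{c}^*,\bar{d}^*)\in M$ for $\xx_{i+1}$; by Lemma \ref{lem:restriction is ok}, its restriction to the lengths of $\xx_i$ shares the joint type of $(\bar{c}_A^i,\bar{d}_A^i)$ with $\bar{c}_{\xx_i}\bar{d}_{\xx_i}$ over $A$. The plan is to upgrade this joint-type agreement from over $A\cup\bar{c}_{\xx_i}\bar{d}_{\xx_i}$ to over $A\cup\bar{c}_{\xx_{i+1}}\bar{d}_{\xx_{i+1}}$, combining non-splitting of $\tp(\bar{c}_{\xx_{i+1}}/M)$ over $B_{\xx_{i+1}}\subseteq A$ (to absorb the new coordinates of $\bar{c}$) with the strong-tree clause (\ref{enu:The-strong tree}) for $\xx_{i+1}$ applied to $(\bar{c}^*,\bar{d}^*)$ (to absorb the new coordinates of $\bar{d}$). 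Once this upgrade is in hand, strong homogeneity of $M$ over $A$ supplies an automorphism of $M$ extending to an automorphism of $\C_D$ that fixes $A\cup\bar{c}_{\xx_{i+1}}\bar{d}_{\xx_{i+1}}$ and carries the restriction to $(\bar{c}_A^i,\bar{d}_A^i)$; its image on $(\bar{c}^*,\bar{d}^*)$ is the desired $(\bar{c}_A^{i+1},\bar{d}_A^{i+1})\in M$, still valid for $\xx_{i+1}$ because fixing $\bar{c}_{\xx_{i+1}}\bar{d}_{\xx_{i+1}}$ preserves clauses (\ref{enu:realize r})--(\ref{enu:The-strong tree}).

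The main obstacle, as indicated, is precisely this joint-type upgrade: Lemma \ref{lem:restriction is ok} alone gives us only agreement over $A\cup\bar{c}_{\xx_i}\bar{d}_{\xx_i}$, and without using non-splitting and the strong-tree condition \emph{together} we would be unable to find an adjusting automorphism that fixes $\bar{c}_{\xx_{i+1}}\bar{d}_{\xx_{i+1}}$ and thus preserves validity of the candidate for $\xx_{i+1}$; once the upgrade is established, the rest is routine recursion bookkeeping inside the $D$-saturated model $M$.
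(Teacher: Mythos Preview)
Your successor step conceals a genuine gap. You need $(\bar c_A^{i+1},\bar d_A^{i+1})\in M$ extending $(\bar c_A^i,\bar d_A^i)$ while still witnessing Definition~\ref{def:self-solvable} for $\xx_{i+1}$. The validity clauses (\ref{enu:realize r}) and (\ref{enu:The-strong tree}) for $\xx_{i+1}$ are conditions involving $\bar c_{\xx_{i+1}}\bar d_{\xx_{i+1}}$, which lie \emph{outside} $M$. So the automorphism you invoke must simultaneously fix $A\cup\bar c_{\xx_{i+1}}\bar d_{\xx_{i+1}}$ \emph{and} map $M$ onto $M$. Strong homogeneity of $M$ yields automorphisms of $M$ fixing $A$, but these give no control whatsoever over $\bar c_{\xx_{i+1}}\bar d_{\xx_{i+1}}$ once extended to $\C_D$; strong homogeneity of $\C_D$ yields automorphisms fixing $A\cup\bar c_{\xx_{i+1}}\bar d_{\xx_{i+1}}$, but these need not preserve $M$ setwise. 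What you are really asking for is homogeneity of the pair $(\C_D,M)$ over $A\cup\bar c_{\xx_{i+1}}\bar d_{\xx_{i+1}}$, and nothing in the hypotheses supplies this. Equivalently, you would need to realize in $M$ the type of $(\bar c^*,\bar d^*)$ over $A\cup(\bar c_A^i,\bar d_A^i)\cup\bar c_{\xx_{i+1}}\bar d_{\xx_{i+1}}$, but this is a type with parameters outside $M$, so $D$-saturation of $M$ does not apply.

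The paper's proof avoids this obstruction entirely. Rather than recursively extending a single witness, it iterates self-solvability of each $\xx_j$ with growing parameter sets $A_i\subseteq M$ to build a long staircase $\sequence{\bar c_i\bar d_i}{i<\delta\cdot\kappa+\delta}$ entirely inside $M$. Corollary~\ref{cor:finding indiscernibles} together with Lemma~\ref{lem:restriction is ok} makes the $j$-truncated sequences indiscernible over $A$, and the final witness $(\bar c',\bar d')$ is obtained by realizing in $M$ the union of the average types $q_j$ along these sequences---a $D$-type over a small subset of $M$, so $D$-saturation applies legitimately. The verification that $(\bar c',\bar d')$ works then reduces, via indiscernibility and non-splitting, to the known clause (\ref{enu:The-strong tree}) for the individual $\xx_j$ at a suitable late index. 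This detour through indiscernible averages is precisely what lets one stay inside $M$ while matching conditions that reference the external tuples $\bar c_{\xx_j}\bar d_{\xx_j}$.
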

\begin{proof}
Easily $\xx$ is a $\lambda$-decomposition (i.e., $\left|B_{\xx}\right|<\lambda$
and $r_{\xx}$ is well defined). Also, $\tp\left(\bar{c}_{\xx}/M\right)$
does not split over $B_{\xx}=\bigcup B_{\xx_{i}}$, where we let $M=M_{\xx}$. 

Let $A\subseteq M$ be of cardinality $<\lambda$ and without loss
of generality suppose $B_{\xx}\subseteq A$. 

In order to prove the theorem, we need to find some $\bar{c},\bar{d}\in M^{<\lambda}$
in the same length as $\bar{c}_{\xx},\bar{d}_{\xx}$ such that $\tp\left(\bar{c}\bar{d}/A\right)=\tp\left(\bar{c}_{\xx}\bar{d}_{\xx}/A\right)$,
$\tp\left(\bar{c}_{\xx},\bar{d}_{\xx},\bar{c},\bar{d}\right)=r_{\xx}$,
and $\tp\left(\bar{d}_{\xx}/\bar{c}_{\xx}+\bar{c}+\bar{d}\right)\vdash\tp\left(\bar{d}_{\xx}/\bar{c}_{\xx}+\bar{c}+\bar{d}+A\right)$. 

Let us simplify the notation by letting $\beta_{j}=\lg\left(\bar{c}_{\xx_{j}}\right),\gamma_{j}=\lg\left(\bar{d}_{\xx_{j}}\right)$.
Note that when $\beta_{j}$ and $\gamma_{j}$ are constant from some
point onwards, finding such $\bar{c},\bar{d}$ is done by just applying
Definition \ref{def:self-solvable} to some $\xx_{j}$, so although
the following argument works for this case as well, it is more interesting
when $\beta_{j}$ and $\gamma_{j}$ are increasing. 

For every $i<\delta$, let $\bar{c}_{i},\bar{d}_{i}=\bar{c}_{A_{i}}\bar{d}_{A_{i}}$
be as in Definition \ref{def:self-solvable} applied to $\xx_{i}$
(so their length is $\beta_{i},\gamma_{i}$), where $A_{i}=A\cup\set{\bar{c}_{j},\bar{d}_{j}}{j<i}$.
Now repeat this process starting with $A_{\delta}$ to construct $\bar{c}_{i},\bar{d}_{i}$
for $\delta\leq i<\delta+\delta$.

Now we repeat this process $\kappa+1$ times, for $\kappa=\mu\left(D\right)^{+}+\left|T\right|^{+}<\lambda$,
to construct $\bar{c}_{i},\bar{d}_{i}$ and $A_{i}$ for $\delta+\delta\leq i<\delta\cdot\kappa+\delta$.
For $j<\delta$, let $O_{j}\subseteq\delta\cdot\kappa+\delta$ be
the set of all ordinals $i$ such that $i\modp{\delta}\geq j$. By
Corollary \ref{cor:finding indiscernibles} and Lemma \ref{lem:restriction is ok},
for each $j<\delta$, the sequence $I_{j}=\sequence{\left(\bar{c}_{i}\upharpoonright\beta_{j},\bar{d}_{i}\upharpoonright\gamma_{j}\right)}{i\in O_{j}}\concat\left\langle \left(\bar{c}_{\xx_{j}}\bar{d}_{\xx_{j}}\right)\right\rangle $
is an indiscernible sequence over $A$. 

Let $O_{j}'=O_{j}\cap\delta\cdot\kappa$, $O_{j}''=O_{j}\cap\left[\delta\cdot\kappa,\delta\cdot\kappa+\delta\right)$,
and let $I_{j}'=I_{j}\upharpoonright O_{j}'$, $I_{j}''=I_{j}\upharpoonright O_{j}''$.
As $O_{j}'$ has cofinality $\kappa$ (suppose $X\subseteq O_{j}'$
is unbounded, then the set $\set{i<\kappa}{X\cap\left[\delta\cdot i,\delta\cdot i+\delta\right)\neq\emptyset}$
is unbounded, so has cardinality $\kappa$, so $\left|X\right|\geq\kappa$,
but easily, the set $\set{\delta\cdot i+j}{i<\kappa}$ is cofinal
in $O_{j}'$), we can apply Lemma \ref{lem:average type on indiscernible sequence},
and consider the type $q_{j}\left(\bar{x}\right)=\average{I_{j}'}{A_{\delta\cdot\kappa+\delta}}{}$,
which is a complete $D$-type. So each $q_{j}$ is a type in $\beta_{j}+\gamma_{j}$
variables. 
\begin{claim*}
For $j_{1}<j_{2}$, $q_{j_{1}}\subseteq q_{j_{2}}$.\end{claim*}
\begin{proof}
Suppose $\varphi\left(\bar{y},\bar{a}\right)\in q_{j_{1}}$, where
$\bar{a}$ is a finite tuple from $A_{\delta\cdot\kappa+\delta}$
and $\bar{y}$ is a finite subtuple of variables of $\bar{x}$. By
definition, it means that for large enough $i\in O_{j_{1}}'$, $\varphi\left(\bar{c}_{i}\upharpoonright\beta_{j_{1}},\bar{d}_{i}\upharpoonright\gamma_{j_{1}},\bar{a}\right)$
holds (where we restrict $\bar{c}_{i},\bar{d}_{i}$ to $\bar{y}$,
of course). But $j_{2}>j_{1}$, so $O_{j_{2}}'\subseteq O_{j_{1}}'$,
so the same is true for $O_{j_{2}}'$, and so $\varphi\left(\bar{y},\bar{a}\right)\in q_{j_{2}}$. 
\end{proof}
Let $q=\bigcup_{j<\delta}q_{j}$. As $\delta$ is limit, it follows
that $q$ is also a $D$-type over $A_{\delta\cdot\kappa+\delta}$.
Let $\bar{c}',\bar{d}'\models q$, and for each $j<\delta$, let $\bar{c}'_{j}=\bar{c}\upharpoonright\beta_{j},\bar{d}_{j}'=\bar{d}'\upharpoonright\gamma_{j}$.
It now follows that for each $j<\delta$, the sequence $I_{j}'\concat\left\langle \bar{c}_{j}',\bar{d}_{j}'\right\rangle \concat I_{j}''$
is indiscernible over $A$. 

Let us check that $\bar{c}',\bar{d}'$ are as required. To show this
it is enough to see that for every $j<\delta$, $\bar{c}_{\xx_{j}}\bar{d}_{\xx_{j}}\bar{c}_{j}'\bar{d}_{j}'\equiv_{A}\bar{c}_{\xx_{j}}\bar{d}_{\xx_{j}}\bar{c}_{j}\bar{d_{j}}$.
Suppose $\varphi\left(\bar{c}_{\xx_{j}},\bar{d}_{\xx_{j}},\bar{c}_{j},\bar{d}_{j},\bar{a}\right)$
holds, where $\bar{a}$ is a finite tuple from $A$. By indiscernibility,
$\varphi\left(\bar{c}_{\xx_{j}},\bar{d}_{\xx_{j}},\bar{c}_{\delta\cdot\kappa+j},\bar{d}_{\delta\cdot\kappa+j},\bar{a}\right)$
holds as well. By choice of $\bar{c}_{\delta\cdot\kappa+j+1},\bar{d}_{\delta\cdot\kappa+j+1}$,
it follows that 
\begin{equation}
r_{\xx_{j}}\left(\bar{c}_{\xx_{j}},\bar{x}_{\bar{d}_{\xx_{j}}},\bar{c}_{\delta\cdot\kappa+j+1}\upharpoonright\beta_{j},\bar{d}_{\delta\cdot\kappa+j+1}\upharpoonright\gamma_{j}\right)\vdash\varphi\left(\bar{c}_{\xx_{j}},\bar{x}_{\bar{d}_{\xx_{j}}},\bar{c}_{\delta\cdot\kappa+j},\bar{d}_{\delta\cdot\kappa+j},\bar{a}\right).\tag{*}\label{eq:implication}
\end{equation}
By indiscernibility, 
\begin{align*}
 & \left(\bar{c}_{\delta\cdot\kappa+j+1}\upharpoonright\beta_{j}\right)\left(\bar{d}_{\delta\cdot\kappa+j+1}\upharpoonright\gamma_{j}\right)\bar{c}_{\delta\cdot\kappa+j}\bar{d}_{\delta\cdot\kappa+j}\equiv_{A}\\
 & \left(\bar{c}_{\delta\cdot\kappa+j+1}\upharpoonright\beta_{j}\right)\left(\bar{d}_{\delta\cdot\kappa+j+1}\upharpoonright\gamma_{j}\right)\left(\bar{c}'\upharpoonright\beta_{j}\right)\left(\bar{d}'\upharpoonright\gamma_{j}\right),
\end{align*}
and as $\tp\left(\bar{c}_{\xx}/M\right)$ does not split over $A$,
\begin{align*}
 & \bar{c}_{\xx_{j}}\left(\bar{c}_{\delta\cdot\kappa+j+1}\upharpoonright\beta_{j}\right)\left(\bar{d}_{\delta\cdot\kappa+j+1}\upharpoonright\gamma_{j}\right)\bar{c}_{\delta\cdot\kappa+j}\bar{d}_{\delta\cdot\kappa+j}\equiv_{A}\\
 & \bar{c}_{\xx_{j}}\left(\bar{c}_{\delta\cdot\kappa+j+1}\upharpoonright\beta_{j}\right)\left(\bar{d}_{\delta\cdot\kappa+j+1}\upharpoonright\gamma_{j}\right)\bar{c}'_{j}\bar{d}'_{j}.
\end{align*}
Applying the last equation to (\ref{eq:implication}), we get that
\begin{equation}
r_{\xx_{j}}\left(\bar{c}_{\xx_{j}},\bar{x}_{\bar{d}_{\xx_{j}}},\bar{c}_{\delta\cdot\kappa+j+1}\upharpoonright\beta_{j},\bar{d}_{\delta\cdot\kappa+j+1}\upharpoonright\gamma_{j}\right)\vdash\varphi\left(\bar{c}_{\xx_{j}},\bar{x}_{\bar{d}_{\xx_{j}}},\bar{c}'_{j},\bar{d}'_{j},\bar{a}\right).\tag{**}\label{eq:second implication}
\end{equation}
As $\bar{d}_{\xx_{j}}$ satisfies the left hand side of (\ref{eq:second implication}),
it also satisfies the right side, and we are done. \end{proof}
\begin{rem}
The proof of Theorem \ref{thm:union} as above can be simplified in
the case where $D$ is trivial (i.e., the usual first order case).
There, we would not need to introduce $\kappa$ (i.e., we can choose
$\kappa=1$), and we would not have to use dependence (which we used
in applying Lemma \ref{lem:average type on indiscernible sequence}
which states that the average type of an indiscernible sequence exists
and is a $D$-type). To make the proof work, we only needed to find
$\bar{c}',\bar{d}'$ such that the sequence $I_{j}'\concat\left\langle \bar{c}'\upharpoonright\beta_{j},\bar{d}'\upharpoonright\gamma_{j}\right\rangle \concat I_{j}''$
is indiscernible over $A$, and this can easily done by compactness. 
\end{rem}
We now move on to points (\ref{enu:isohomo}) and (\ref{enu:count})
of Definition \ref{def:good family}. 

Suppose $\xx$ is a $\lambda$-tree-type decomposition. Let $L_{\bar{c}_{\xx}}$
be the set of formulas $\varphi\left(\bar{x}_{\bar{c}_{\xx}},\bar{y}\right)$
where $\bar{x}_{\bar{c}_{\xx}}$ is a tuple of variables in the length
of $\bar{c}_{\xx}$ (of course only finitely many of them appear in
$\varphi$). For $B\subseteq M_{\xx}$ over which $\tp\left(\bar{c}_{\xx}/M_{\xx}\right)$
does not split, define $\sch{\xx}B:L_{\bar{c}_{\xx}}\to\SS\left(S_{D}^{<\omega}\left(B\right)\right)$
by: 
\[
\sch{\xx}B\left(\varphi\left(\bar{x}_{\bar{c}_{\xx}},\bar{y}\right)\right)=\set{p\left(\bar{y}\right)\in S_{D}\left(B\right)}{\exists\bar{e}\in M_{\xx}^{\lg\left(\bar{y}\right)}\left(\bar{e}\models p\land\C_{D}\models\varphi\left(\bar{c}_{\xx},\bar{e}\right)\right)}.
\]
As $\tp\left(\bar{c}_{\xx}/M\right)$ does not split over $B$, we
can also replace $\exists$ with $\forall$ in the definition of $\sch{\xx}B$.
This implies that for $B'\supseteq B$ and $p\in S_{D}\left(B'\right)$,
\begin{equation}
p\in\sch{\xx}{B'}\left(\varphi\right)\Leftrightarrow p|_{B}\in\sch{\xx}B\left(\varphi\right).\tag{\ensuremath{\dagger}}\label{eq:restriction}
\end{equation}
Suppose that $\yy$ is another $\lambda$-tree-type decomposition.
When $h$ is an elementary map from $B_{\xx}$ to $B_{\yy}$, then
it induces a well defined map from $S_{D}\left(B_{\xx}\right)$ to
$S_{D}\left(B_{\yy}\right)$ which we will also call $h$. So if $\bar{c}_{\xx}$
has the same length as $\bar{c}_{\yy}$, it makes sense to ask that
$h\circ\sch{\xx}{B_{\xx}}=\sch{\yy}{B_{\yy}}$. When $r_{\xx}=r_{\yy}$,
a partial elementary map $h$ whose domain is $B_{\xx}\cup\bigcup\bar{c}_{\xx}\cup\bigcup\bar{d}_{\xx}$
which maps $\left(\bar{d}_{\xx},\bar{c}_{\xx},B_{\xx}\right)$ onto
$\left(\bar{d}_{\yy},\bar{c}_{\yy},B_{\yy}\right)$ and satisfies
$h\circ\sch{\xx}{B_{\xx}}=\sch{\yy}{B_{\yy}}$ is called a \emph{pseudo
isomorphism} between $\xx$ and $\yy$. 

Note that if $h$ is a pseudo isomorphism, then for any two tuples
$\bar{a}$, $\bar{b}$, from $M_{\xx}$, $M_{\yy}$ respectively,
if $h\upharpoonright B_{\xx}$ can be extend to witness that $B_{\xx}\bar{a}\equiv B_{\yy}\bar{b}$,
then $\bar{c}_{\xx}B_{\xx}\bar{a}\equiv\bar{c}_{\yy}B_{\yy}\bar{a}$. 
\begin{prop}
\label{prop:pseudo isomorphism implies weak isomorphism}Suppose $\xx,\yy\in\Ff$
are such that $r_{\xx}=r_{\yy}$, and suppose that $h:\xx\to\yy$
is a pseudo isomorphism. Then $h$ is a weak isomorphism, i.e., it
extends to an isomorphism $h^{+}:\xx\to\yy$. Conversely, if $h$
is a weak isomorphism, then it is a pseudo isomorphism. \end{prop}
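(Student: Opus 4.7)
The converse direction (weak iso $\Rightarrow$ pseudo iso) is essentially bookkeeping. Given a weak isomorphism $h$, fix an extending isomorphism $h^{+}:\xx\to\yy$; $h^{+}$ is a partial elementary bijection between $M_{\xx}\cup\bigcup\bar{c}_{\xx}\cup\bigcup\bar{d}_{\xx}$ and $M_{\yy}\cup\bigcup\bar{c}_{\yy}\cup\bigcup\bar{d}_{\yy}$ sending $M_{\xx}$ onto $M_{\yy}$. For any $\varphi(\bar{x}_{\bar{c}_{\xx}},\bar{y})$ and $p\in S_{D}^{<\omega}(B_{\xx})$, pick a realization $\bar{e}\in M_{\xx}$ of $p$; then $h^{+}(\bar{e})\in M_{\yy}$ realizes $h(p)$ and satisfies $\varphi(\bar{c}_{\yy},h^{+}(\bar{e}))$ iff $\varphi(\bar{c}_{\xx},\bar{e})$ holds, by elementarity. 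Using the equivalence of $\exists$ and $\forall$ in the definition of $\Phi$ (which comes from non-splitting), this yields $h\circ\sch{\xx}{B_{\xx}}=\sch{\yy}{B_{\yy}}$.

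For the main direction (pseudo iso $\Rightarrow$ weak iso), the plan is a back-and-forth of length $\lambda$ between $M_{\xx}$ and $M_{\yy}$. Enumerating $M_{\xx}=\sequence{a_{\alpha}}{\alpha<\lambda}$ and $M_{\yy}=\sequence{b_{\alpha}}{\alpha<\lambda}$, I would build an increasing chain $\sequence{h_{i}}{i<\lambda}$ of partial elementary maps extending $h$, with domain $D_{i}\cup\bigcup\bar{c}_{\xx}\cup\bigcup\bar{d}_{\xx}$ where $D_{i}\subseteq M_{\xx}$ has size $<\lambda$ and $B_{\xx}\subseteq D_{i}$, and analogous range $E_{i}\cup\bigcup\bar{c}_{\yy}\cup\bigcup\bar{d}_{\yy}$, taking unions at limits.

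The key move is the forth step. Given $a\in M_{\xx}$, I seek $b\in M_{\yy}$ such that $h_{i}\cup\{(a,b)\}$ is partial elementary. Since $\bar{c}_{\xx},\bar{d}_{\xx},\bar{c}_{\yy},\bar{d}_{\yy}$ need not lie in $M_{\xx},M_{\yy}$, $D$-saturation cannot be applied directly to the full type $h_{i}(\tp(a/\operatorname{dom}(h_{i})))$; instead I take $p_{0}=\tp(a/D_{i}\cup B_{\xx})$, a $D$-type over a subset of $M_{\xx}$ of size $<\lambda$, and use $D$-saturation of $M_{\yy}$ to realize $h_{i}(p_{0})$ by some $b\in M_{\yy}$. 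The $\bar{c}_{\xx}$-part of elementarity then extends automatically: non-splitting of $\tp(\bar{c}_{\xx}/M_{\xx})$ over $B_{\xx}$ (part of the tree-type axiom) says that $\tp(a,D_{i},\bar{c}_{\xx}/B_{\xx})$ is determined by $\tp(a,D_{i}/B_{\xx})$ through $\sch{\xx}{B_{\xx}}$, and the pseudo-isomorphism identity $h\circ\sch{\xx}{B_{\xx}}=\sch{\yy}{B_{\yy}}$ transports this faithfully to the $\yy$-side.

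The main obstacle will be preserving elementarity on the $\bar{d}_{\xx}$-part. The tree-type axiom gives, for $A=D_{i}\cup\{a\}$, a small witness $\bar{e}_{A}\in M_{\xx}^{<\kappa}$ with $\tp(\bar{d}_{\xx}/\bar{e}_{A}\bar{c}_{\xx})\vdash\tp(\bar{d}_{\xx}/A\bar{c}_{\xx})$, and analogously on the $\yy$-side; to transfer the implication along $h_{i}$, the plan is to maintain as an invariant of the back-and-forth that the relevant tree-type witnesses already lie in $D_{i}$ (and their images in $E_{i}$). Concretely, at each successor step, before adding $a$, I first enlarge $D_{i}$ by a $<\kappa$-sized block of such witnesses, iterating an $\omega$-nested closure in the spirit of Proposition \ref{prop:finding self solvable by chasing tail}; since $\kappa<\lambda$ this closure stays small. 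Limits are handled by unions, using that partial elementarity passes to increasing unions. The final map $h^{+}=\bigcup_{i<\lambda}h_{i}$ is then the desired isomorphism $\xx\to\yy$ extending $h$.
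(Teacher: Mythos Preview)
Your converse direction is fine and matches the paper. The gap is in the forward direction: you use only the tree-type axiom (the witnesses $\bar{e}_{A}$ from Definition~\ref{def:tree type}) and never invoke the self-solvable data $\bar{c}_{A},\bar{d}_{A}$ from Definition~\ref{def:self-solvable} or the hypothesis $r_{\xx}=r_{\yy}$. That hypothesis is precisely what makes the $\bar{d}$-part transferable, and your sketch never touches it.

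Concretely, your plan is to arrange $\bar{e}_{A}\subseteq D_{i}$ so that the implication $\tp(\bar{d}_{\xx}/\bar{e}_{A}\bar{c}_{\xx})\vdash\tp(\bar{d}_{\xx}/A\bar{c}_{\xx})$ can be transported. But the ``$\omega$-nested closure'' cannot bootstrap: to enlarge $D_{i}$ by a single new element (be it $a$ or a coordinate of some $\bar{e}$) while preserving elementarity over $\bar{d}_{\xx}$, you already need a witness inside $D_{i}$---which is exactly what you are trying to produce. At the very first step (domain $B_{\xx}$) there is no reason the witness for $B_{\xx}\cup\{a\}$ lies in $B_{\xx}$, so the induction never starts. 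The paper avoids this circularity by using the self-solvable witnesses: the isolating type on the left of the implication is $r_{\xx}(\bar{c}_{\xx},\bar{x}_{\bar{d}},\bar{c}_{A},\bar{d}_{A})$, a fixed type over $\emptyset$, and since $r_{\xx}=r_{\yy}$ and $(\bar{c}_{\yy},\bar{d}_{\yy},\bar{c}_{B_{\yy}}^{\yy},\bar{d}_{B_{\yy}}^{\yy})\models r_{\yy}$, the implication transfers immediately once one matches $\bar{c}_{A}^{\xx}\bar{d}_{A}^{\xx}$ with $\bar{c}_{B_{\yy}}^{\yy}\bar{d}_{B_{\yy}}^{\yy}$ over the base---and those tuples live in $M$, so $D$-saturation applies. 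Replace your tree-type witnesses by the self-solvable ones and the argument goes through in a single step per element, with no closure needed.
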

\begin{proof}
We will do a back and forth argument. In each successor step we will
add an element to either $B_{\xx}$ or $B_{\yy}$ and increase $h$.
In doing so, the new $\xx$ and $\yy$'s will still remain in $\Ff$
(by point (\ref{enu:enbase}) of Definition \ref{def:good family}
which is easily true for $\Ff$). In addition, the increased $h$'s
will still be pseudo isomorphisms by (\ref{eq:restriction}). In order
to do this, it is enough to do a single step, so assume that $h:\xx\to\yy$
is a pseudo isomorphism, and $a\in M_{\xx}$. We want to find $b\in M_{\yy}$
such that $h\cup\left\{ \left(a,b\right)\right\} $ is a pseudo isomorphism
from $\xx'=\left(M_{\xx},B_{\xx}\cup\left\{ a\right\} ,\bar{d}_{\xx},\bar{c}_{\xx},r_{\xx}\right)$
to $\yy'=\left(M_{\yy},B_{\yy}\cup\left\{ b\right\} ,\bar{d}_{\yy},\bar{c}_{\yy},r_{\yy}\right)$. 

Let $A=B_{\xx}\cup\left\{ a\right\} $, and let $\bar{c}_{A}^{\xx},\bar{d}_{A}^{\xx}$
be as in Definition \ref{def:self-solvable} for $\xx$. Let $\bar{c}_{B_{\yy}}^{\yy},\bar{d}_{B_{\yy}}^{\yy}$
be the parallel tuples for $\yy$ and $B_{\yy}$. By (\ref{enu:same type over small set})
of Definition \ref{def:self-solvable}, $B_{\xx}\bar{c}_{A}^{\xx}\bar{d}_{A}^{\xx}\equiv B_{\yy}\bar{c}_{B_{\yy}}^{\yy}\bar{d}_{B_{\yy}}^{\yy}$,
as witnessed by expanding $h\upharpoonright B_{\xx}$ to $B_{\xx}\bar{c}_{A}^{\xx}\bar{d}_{A}^{\xx}$.
Hence as $M_{\yy}$ is $D$-saturated there is some $b\in M_{\yy}$
such that $B_{\xx}a\bar{c}_{A}^{\xx}\bar{d}_{A}^{\xx}\equiv B_{\yy}b\bar{c}_{B_{\yy}}^{\yy}\bar{d}_{B_{\yy}}^{\yy}$.
So we have found our $b$. 

As noted above, as $h$ is a pseudo isomorphism, we get that 
\begin{equation}
B_{\xx}a\bar{c}_{A}^{\xx}\bar{d}_{A}^{\xx}\bar{c}_{\xx}\equiv B_{\yy}b\bar{c}_{B_{\yy}}^{\yy}\bar{d}_{B_{\yy}}^{\yy}\bar{c}_{\yy}.\tag{\ensuremath{\dagger\dagger}}\label{eq:adding c}
\end{equation}
Suppose now that $\varphi\left(\bar{d}_{\xx},\bar{c}_{\xx},a,\bar{e}\right)$
holds, where $\bar{e}$ is a finite tuple from $B_{\xx}$. By the
choice of $\bar{c}_{A}^{\xx},\bar{d}_{A}^{\xx}$, $r_{\xx}\left(\bar{c}_{\xx},\bar{x}_{\bar{d}_{\xx}},\bar{c}_{A}^{\xx},\bar{d}_{A}^{\xx}\right)\vdash\varphi\left(\bar{x}_{\bar{d}_{\xx}},\bar{c}_{\xx},a,\bar{e}\right)$.
Applying (\ref{eq:adding c}), we get that $r_{\xx}\left(\bar{c}_{\yy},\bar{x}_{\bar{d}_{\yy}},\bar{c}_{B_{\yy}}^{\yy},\bar{d}_{B_{\yy}}^{\yy}\right)\vdash\varphi\left(\bar{x}_{\bar{d}_{\yy}},\bar{c}_{\yy},b,h\left(\bar{e}\right)\right)$.
As $r_{\xx}=r_{\yy}$, $\bar{d}_{\yy}$ realizes the left hand side,
so also the right hand side and so $\C_{D}\models\varphi\left(\bar{d}_{\yy},\bar{c}_{\yy},b,h\left(\bar{e}\right)\right)$.

For the limit stages, note that if $\sequence{h_{i}}{i<\delta}$ is
an increasing sequence of pseudo isomorphisms $h_{i}:\xx_{i}\to\yy_{i}$
where $\xx_{i}=\left(M_{\xx},B_{\xx_{i}},\bar{d}_{\xx},\bar{c}_{\xx},r_{\xx}\right)$
and $\yy_{i}=\left(M_{\yy},B_{\yy_{i}},\bar{d}_{\yy},\bar{c}_{\yy},r_{\yy}\right)$
are increasing, and $\delta<\lambda$, then $\bigcup\set{h_{i}}{i<\delta}$
is a pseudo isomorphism from $\sup_{i<\delta}\xx_{i}$ to $\sup_{i<\delta}\yy_{i}$.

The other direction is immediate. \end{proof}
\begin{cor}
Clause (\ref{enu:isohomo}) in Definition \ref{def:good family} holds
for $\Ff$. \end{cor}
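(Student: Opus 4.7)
The plan is to reduce the statement to Proposition \ref{prop:pseudo isomorphism implies weak isomorphism} by showing that the union $g = \bigcup_{i<\delta} g_i$ is a pseudo isomorphism from $\xx=\sup_{i<\delta}\xx_i$ to $\yy=\sup_{i<\delta}\yy_i$. First, by Theorem \ref{thm:union}, $\xx,\yy\in\Ff$, so the claim that $g$ is a weak isomorphism even makes sense. By the converse direction of Proposition \ref{prop:pseudo isomorphism implies weak isomorphism}, each $g_i$ is already a pseudo isomorphism from $\xx_i$ to $\yy_i$, and in particular $r_{\xx_i}=r_{\yy_i}$ (since a weak isomorphism extends to an isomorphism and isomorphic decompositions share the same $r$). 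Taking unions yields $r_{\xx}=\bigcup_{i<\delta}r_{\xx_i}=\bigcup_{i<\delta}r_{\yy_i}=r_{\yy}$, which is one of the hypotheses of Proposition \ref{prop:pseudo isomorphism implies weak isomorphism}.

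Next, I would check that $g$ is partial elementary with the correct domain and that it maps $(\bar d_{\xx},\bar c_{\xx},B_{\xx})$ onto $(\bar d_{\yy},\bar c_{\yy},B_{\yy})$; these are formal consequences of the $g_i$ forming an increasing chain together with the fact that the relevant tuples and sets are unions along $\delta$. Elementarity at the limit is immediate because any formula uses only finitely many parameters, which all sit inside some $g_i$.

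The heart of the argument is verifying the commutation relation $g\circ\sch{\xx}{B_{\xx}}=\sch{\yy}{B_{\yy}}$. Fix $\varphi(\bar x_{\bar c_{\xx}},\bar y)\in L_{\bar c_{\xx}}$. Only finitely many variables from $\bar x_{\bar c_{\xx}}$ occur in $\varphi$, so there is some $i<\delta$ such that all of them are already indexed by $\bar c_{\xx_i}$; consequently $\sch{\xx}{B_{\xx_i}}(\varphi)=\sch{\xx_i}{B_{\xx_i}}(\varphi)$, and similarly on the $\yy$-side. Using (\ref{eq:restriction}) on $\xx$ and $\yy$, and the fact that $g(p)|_{B_{\yy_i}}=g_i(p|_{B_{\xx_i}})$ since $g_i\subseteq g$, I can chain the equivalences
\[
p\in\sch{\xx}{B_{\xx}}(\varphi)\iff p|_{B_{\xx_i}}\in\sch{\xx_i}{B_{\xx_i}}(\varphi)\iff g_i(p|_{B_{\xx_i}})\in\sch{\yy_i}{B_{\yy_i}}(\varphi)\iff g(p)\in\sch{\yy}{B_{\yy}}(\varphi),
\]
where the middle equivalence uses that $g_i$ is a pseudo isomorphism. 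This gives $g\circ\sch{\xx}{B_{\xx}}=\sch{\yy}{B_{\yy}}$, hence $g$ is a pseudo isomorphism from $\xx$ to $\yy$, and Proposition \ref{prop:pseudo isomorphism implies weak isomorphism} promotes it to a weak isomorphism, as required.

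The only delicate step I expect is the bookkeeping around restrictions of types in the verification of the commutation relation: one must carefully use the fact that $\sch{\xx}{B_{\xx_i}}(\varphi)$ and $\sch{\xx_i}{B_{\xx_i}}(\varphi)$ genuinely coincide on formulas whose free variables land inside $\bar c_{\xx_i}$, and similarly that $g$ really does extend each $g_i$ on the appropriate piece of the base. Everything else is routine bookkeeping about unions of increasing chains of elementary maps.
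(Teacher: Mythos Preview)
Your proposal is correct and follows essentially the same route as the paper: both reduce to Proposition~\ref{prop:pseudo isomorphism implies weak isomorphism} by checking that $r_{\xx}=r_{\yy}$ and verifying $g\circ\sch{\xx}{B_{\xx}}=\sch{\yy}{B_{\yy}}$ via a chain of equivalences that passes through $\sch{\xx_i}{B_{\xx_i}}$ and $\sch{\yy_i}{B_{\yy_i}}$ using (\ref{eq:restriction}). If anything, you are slightly more explicit than the paper in invoking the converse direction of Proposition~\ref{prop:pseudo isomorphism implies weak isomorphism} to justify that each $g_i$ is itself a pseudo isomorphism before chaining.
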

\begin{proof}
We are given two increasing sequences of decompositions $\sequence{\xx_{i}}{i<\delta}$
and $\sequence{\yy_{i}}{i<\delta}$ in $\Ff$, and we assume that
for each $i<\delta$ there is a weak isomorphism $g_{i}:\xx_{i}\to\yy_{i}$
such that $g_{i}\subseteq g_{i}$ whenever $i<j$. We need to show
that the union $g=\bigcup_{i<\delta}g_{i}$ is also a weak isomorphism
from $\xx=\sup_{i<\delta}\xx_{i}$ to $\yy=\sup_{i<\delta}\yy_{i}$.
We already know by Theorem \ref{thm:union} that $\xx,\yy\in\Ff$,
so by Proposition \ref{prop:pseudo isomorphism implies weak isomorphism},
we only need to show that $g$ is a pseudo isomorphism and that $r_{\xx}=r_{\yy}$.
The latter is clear, as $r_{\xx}=\bigcup_{i<\delta}r_{\xx_{i}}=\bigcup_{i<\delta}r_{\yy_{i}}=r_{\yy}$.
Also, it is clear that $g$ is an elementary map taking $\left(\bar{d}_{\xx},\bar{c}_{\xx},B_{\xx}\right)$
to $\left(\bar{d}_{\yy},\bar{c}_{\yy},B_{\yy}\right)$. 

Note that $L_{\bar{c}_{\xx}}=\bigcup_{i<\delta}L_{\bar{c}_{\xx_{i}}}$
and that for $\varphi\in L_{\bar{c}_{\xx_{i}}}$, $\sch{\xx}{B_{\xx}}\left(\varphi\right)=\sch{\xx_{i}}{B_{\xx}}\left(\varphi\right)$.
The same is true for $\yy$. Hence, for such $i<\delta$, $\varphi$
and for any $p\in S_{D}\left(B_{\yy}\right)$, 
\begin{align*}
p\in g\left(\sch{\xx}{B_{\xx}}\left(\varphi\right)\right) & \Leftrightarrow p\in g\left(\sch{\xx_{i}}{B_{\xx}}\left(\varphi\right)\right)\\
 & \Leftrightarrow p|_{B_{\yy_{i}}}\in g\left(\sch{\xx_{i}}{B_{\xx_{i}}}\left(\varphi\right)\right)\\
 & \Leftrightarrow p|_{B_{\yy_{i}}}\in g_{i}\left(\sch{\xx_{i}}{B_{\xx_{i}}}\left(\varphi\right)\right)\\
 & \Leftrightarrow p|_{B_{\yy_{i}}}\in\sch{\yy_{i}}{B_{\yy_{i}}}\left(\varphi\right)\\
 & \Leftrightarrow p\in\sch{\yy_{i}}{B_{\yy}}\left(\varphi\right).
\end{align*}
\end{proof}
\begin{defn}
\label{def:externally def set}For a model $M\prec\C$ and $B\subseteq\C$,
we let $M_{\left[B\right]}$ be $M$ with predicates for all $B$-definable
subsets. More precisely, for each formula $\varphi\left(x_{1},\ldots,x_{n},\bar{b}\right)$
over $B$, we add a predicate $R_{\varphi\left(\bar{x},\bar{b}\right)}\left(\bar{x}\right)$
and we interpret it as $\varphi\left(\C^{n},\bar{b}\right)\cap M^{n}$.
If $B\subseteq M$, then this is definably equivalent to adding names
for elements of $B$. 
\end{defn}
For a $\lambda$-decomposition $\xx$, denote by $M_{\left[\xx\right]}$
the structure $M_{\left[\bar{c}_{\xx}+\bar{d}_{\xx}+B_{\xx}\right]}$. 
\begin{thm}
\label{thm:externally definable sets, homogeneous}Suppose $\xx\in\Ff$.
Then $M_{\left[\xx\right]}$ is homogeneous. \end{thm}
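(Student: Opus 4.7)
The plan is to reduce the homogeneity of $M_{\left[\xx\right]}$ to Proposition \ref{prop:pseudo isomorphism implies weak isomorphism}, which extends pseudo isomorphisms between decompositions in $\Ff$ to full isomorphisms. Given a partial elementary $f\colon A\to M_{\xx}$ in $M_{\left[\xx\right]}$ with $\left|A\right|<\lambda$ and any $a\in M_{\xx}$, I will produce an automorphism $\sigma$ of $M_{\left[\xx\right]}$ extending $f$; then $b:=\sigma\left(a\right)\in M_{\xx}$ witnesses homogeneity because $f\cup\left\{ \left(a,b\right)\right\}$ is a restriction of $\sigma$.

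First, one may assume $B_{\xx}\subseteq A$ and $f\upharpoonright B_{\xx}=\id$: the atomic predicate $R_{x=e}$ for each $e\in B_{\xx}$ forces $f\left(e\right)=e$ on $B_{\xx}\cap A$, and then $A$ is enlarged to $A\cup B_{\xx}$ with $f$ extended by the identity on $B_{\xx}\setminus A$. Form the enlarged decompositions $\xx':=\left(M_{\xx},A,\bar{d}_{\xx},\bar{c}_{\xx},r_{\xx}\right)$ and $\xx'':=\left(M_{\xx},f\left(A\right),\bar{d}_{\xx},\bar{c}_{\xx},r_{\xx}\right)$; iterating clause (\ref{enu:enbase}) of Definition \ref{def:good family}, with limit stages handled by Theorem \ref{thm:union}, shows $\xx',\xx''\in\Ff$, and trivially $r_{\xx'}=r_{\xx''}=r_{\xx}$. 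Extend $f$ further by the identity on $\bar{c}_{\xx}\cup\bar{d}_{\xx}$ to a map $f'$; since $f$ preserves each atomic predicate $R_{\varphi\left(\bar{x},\bar{c}_{\xx},\bar{d}_{\xx},\bar{e}\right)}$ of $M_{\left[\xx\right]}$ (for every $L$-formula $\varphi$ and $\bar{e}\in B_{\xx}^{<\omega}$), we obtain $\bar{a}\equiv_{\bar{c}_{\xx}\bar{d}_{\xx}B_{\xx}}f\left(\bar{a}\right)$ in $\C_{D}$ for all $\bar{a}\in A^{<\omega}$, which is exactly the statement that $f'$ is partial elementary in $\C_{D}$.

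Next I verify that $f'$ is a pseudo isomorphism from $\xx'$ to $\xx''$: the equality $f'\circ\sch{\xx'}A=\sch{\xx''}{f\left(A\right)}$ is the only nontrivial condition, and given $p\in\sch{\xx'}A\left(\varphi\right)$ witnessed by some $\bar{e}\in M_{\xx}$ with $\varphi\left(\bar{c}_{\xx},\bar{e}\right)$, we use $D$-saturation of $M_{\xx}$ to realize $f'\left(p\right)$ by some $\bar{e}'\in M_{\xx}$, after which non-splitting of $\tp\left(\bar{c}_{\xx}/M_{\xx}\right)$ over $B_{\xx}$ combined with $\bar{e}'\equiv_{B_{\xx}}\bar{e}$ yields $\varphi\left(\bar{c}_{\xx},\bar{e}'\right)$. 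Proposition \ref{prop:pseudo isomorphism implies weak isomorphism} then produces an isomorphism $f^{+}\colon\xx'\to\xx''$ extending $f'$; its restriction $\sigma:=f^{+}\upharpoonright M_{\xx}$ is an $L$-automorphism of $M_{\xx}$ extending $f$ that, viewed inside $\C_{D}$, pointwise fixes $\bar{c}_{\xx},\bar{d}_{\xx}$ and $B_{\xx}$. Because $\sigma$ is $L$-elementary in $\C_{D}$ and fixes all the parameters of the new predicates, it preserves every $R_{\varphi\left(\bar{x},\bar{c}_{\xx},\bar{d}_{\xx},\bar{e}\right)}$, hence is an automorphism of $M_{\left[\xx\right]}$. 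The main conceptual step is recognizing that $M_{\left[\xx\right]}$-partial elementarity corresponds exactly to $\C_{D}$-partial elementarity over $\bar{c}_{\xx}\bar{d}_{\xx}B_{\xx}$, which is what makes Proposition \ref{prop:pseudo isomorphism implies weak isomorphism} applicable here.
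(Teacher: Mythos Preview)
Your proof is correct and follows essentially the same route as the paper's: enlarge $B_{\xx}$ to $A$ and $f(A)$, extend $f$ by the identity on $\bar{c}_{\xx}\bar{d}_{\xx}$, check that the resulting map is a pseudo isomorphism, and then invoke Proposition \ref{prop:pseudo isomorphism implies weak isomorphism}. The only presentational difference is that the paper obtains $f'\circ\sch{\xx'}A=\sch{\xx''}{f(A)}$ in one line by citing the restriction property (\ref{eq:restriction}) (since $f'\upharpoonright B_{\xx}=\id$, both sides reduce to $\sch{\xx}{B_{\xx}}$), whereas you verify this directly via $D$-saturation and non-splitting---which is exactly the content of (\ref{eq:restriction}) unwound.
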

\begin{proof}
We have to show that if $A\subseteq M$ is of cardinality $<\lambda$,
and $f$ is a partial elementary map of $M_{\left[\xx\right]}$ with
domain $A$, then we can extend it to an automorphism. We may assume
that $B_{\xx}\subseteq A$ and that $f\upharpoonright B_{\xx}=\id$,
as $f$ preserves all $B_{\xx}$-definable sets. It follows that $\xx'=\left(M_{\xx},A,\bar{d}_{\xx},\bar{c}_{\xx},r_{\xx}\right)$
and $\xx''=\left(M_{\xx},f\left(A\right),\bar{d}_{\xx},\bar{c}_{\xx},r_{\xx}\right)$
are both in $\Ff$. By definition, $f$ extends to an elementary map
$f':\left(A,\bar{d}_{\xx},\bar{c}_{\xx}\right)\to\left(f\left(A\right),\bar{d}_{\xx},\bar{c}_{\xx}\right)$,
but moreover $f$ is a pseudo isomorphism. This follows easily by
(\ref{eq:restriction}) above. Hence we are done by Proposition \ref{prop:pseudo isomorphism implies weak isomorphism}. \end{proof}
\begin{cor}
\label{cor:counting up to isomorphism}Clause (\ref{enu:count}) in
Definition \ref{def:good family} holds for $\Ff$. \end{cor}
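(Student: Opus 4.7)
The plan is to associate to each $\xx \in \Ff$ with $M_{\xx} = M$ an $L$-type $q_{\xx}$ of a tuple in $M$ of length less than $\lambda$, and then show that $q_{\xx}$ determines $\xx$ up to isomorphism. Since $\lambda > \theta$ is measurable, hence strongly inaccessible, $\lambda^{<\lambda} = \lambda$, so there are at most $\lambda$ such $L$-types, which will yield the desired bound.

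To build $q_{\xx}$, first apply self-solvability (Definition \ref{def:self-solvable}) at $A = B_{\xx}$ to obtain $\bar{c}_{B}, \bar{d}_{B} \in M$ realizing $\tp_{L}(\bar{c}_{\xx}\bar{d}_{\xx}/B_{\xx})$ with $(\bar{c}_{\xx},\bar{d}_{\xx},\bar{c}_{B},\bar{d}_{B}) \models r_{\xx}$. Next, strong inaccessibility of $\lambda$ together with $|B_{\xx}|,|T| < \lambda$ gives $|S_{D}^{<\omega}(B_{\xx})| \leq 2^{|B_{\xx}|+|T|} < \lambda$; by $D$-saturation of $M$, for each $p \in S_{D}^{<\omega}(B_{\xx})$ pick a realization $\bar{e}_{p} \in M^{<\omega}$, and the concatenation $(\bar{e}_{p})_{p}$ has total length less than $\lambda$. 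Set
\[
q_{\xx} := \tp_{L}\!\bigl(B_{\xx}\,\bar{c}_{\xx}\,\bar{d}_{\xx}\,\bar{c}_{B}\,\bar{d}_{B}\,(\bar{e}_{p})_{p}\,\big/\,\emptyset\bigr).
\]
Applying self-solvability once more at $A' = B_{\xx} \cup \bar{c}_{B} \cup \bar{d}_{B} \cup \bigcup_{p}\bar{e}_{p}$ (still of size $<\lambda$) yields $\bar{c}_{A'},\bar{d}_{A'} \in M$ realizing $\tp(\bar{c}_{\xx}\bar{d}_{\xx}/A')$; hence $q_{\xx}$ is realized in $M$ by a tuple of length less than $\lambda$, and therefore ranges over at most $\lambda$ values.

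Now suppose $\xx,\yy \in \Ff$ with $M_{\xx} = M_{\yy} = M$ and $q_{\xx} = q_{\yy}$ via a bijection of tuples. The induced elementary map restricts to an elementary map within $M$ that extends to some $\sigma \in \Aut(M)$ by homogeneity of $M$; after replacing $\xx$ by $\sigma(\xx)$, we may assume $B_{\xx} = B_{\yy} = B$, $\bar{c}_{B}^{\xx} = \bar{c}_{B}^{\yy}$, $\bar{d}_{B}^{\xx} = \bar{d}_{B}^{\yy}$, and $(\bar{e}_{p})_{p}^{\xx} = (\bar{e}_{p})_{p}^{\yy}$. What remains is an elementary $h: B\bar{c}_{\xx}\bar{d}_{\xx} \to B\bar{c}_{\yy}\bar{d}_{\yy}$ fixing $B$ and the common witnesses, with $\bar{c}_{\xx}\bar{d}_{\xx} \equiv_{B\bar{c}_{B}\bar{d}_{B}(\bar{e}_{p})_{p}} \bar{c}_{\yy}\bar{d}_{\yy}$. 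By Proposition \ref{prop:pseudo isomorphism implies weak isomorphism}, it suffices to show $h$ is a pseudo-isomorphism, i.e., $\Phi_{\xx,B} = \Phi_{\yy,B}$. Both $\tp(\bar{c}_{\xx}/M)$ and $\tp(\bar{c}_{\yy}/M)$ are non-splitting over $B$ (Definition \ref{def:tree type}), and such a non-splitting type is determined by the family of restrictions $\tp(\bar{c}/B\bar{e})$ as $\bar{e}$ ranges over realizations in $M$ of the various $p \in S_{D}^{<\omega}(B)$; since the $\bar{e}_{p}$'s are built into the data and $\bar{c}_{\xx} \equiv_{B\bar{e}_{p}} \bar{c}_{\yy}$ for every $p$, we conclude $\tp(\bar{c}_{\xx}/M) = \tp(\bar{c}_{\yy}/M)$ (under the identification induced by $h$), yielding the desired $\Phi$-equality.

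The main technical point is the bound $|S_{D}^{<\omega}(B_{\xx})| < \lambda$, a consequence of the strong inaccessibility of $\lambda$, which guarantees that the representatives $(\bar{e}_{p})_{p}$ fit into a tuple of length less than $\lambda$; without this, one could not encode enough witnesses into $q_{\xx}$ to pin down the non-splitting extensions $\tp(\bar{c}_{\xx}/M)$ and $\tp(\bar{c}_{\yy}/M)$, and the final $\Phi$-equality step would fail.
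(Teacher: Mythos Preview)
Your argument is correct and corresponds exactly to the alternative route the paper itself sketches in the remark following the corollary. The paper's own proof instead packages the data into the expanded structure $M_{[\xx]}$, invokes Theorem \ref{thm:externally definable sets, homogeneous} to show this structure is homogeneous, and then counts the possible finite diagrams $D(M_{[\xx]})$ in the expanded language $L'$ (of which there are at most $2^{2^{|L'|}} < \lambda$ by strong inaccessibility); two $\xx$'s yielding the same diagram give isomorphic $M_{[\xx]}$'s by Lemma \ref{lem:Grossberg} and Corollary \ref{cor:homo-isom}, and such an $L'$-isomorphism is precisely an isomorphism of decompositions. Your approach bypasses $M_{[\xx]}$ entirely: you encode the pseudo-isomorphism invariants directly into a single type $q_\xx$ realized in $M$ --- the key being the representatives $\bar{e}_p$, which suffice to pin down the non-splitting type $\tp(\bar{c}_\xx/M)$ --- and then appeal to Proposition \ref{prop:pseudo isomorphism implies weak isomorphism}. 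Both routes ultimately rest on that proposition (the paper's via Theorem \ref{thm:externally definable sets, homogeneous}); yours is more direct, while the paper's is more structural and produces Theorem \ref{thm:externally definable sets, homogeneous} as an independent dividend.

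One small point you should make explicit: to invoke Proposition \ref{prop:pseudo isomorphism implies weak isomorphism} you need $r_\xx = r_\yy$. This does follow from your data, since $r_\xx$ is a \emph{complete} type over $\emptyset$ realized by $(\bar{c}_\xx,\bar{d}_\xx,\bar{c}_B,\bar{d}_B)$ (Definition \ref{def:self-solvable}(\ref{enu:realize r})), and this subtuple is part of $q_\xx$; but it deserves a sentence.
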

\begin{proof}
Suppose $\set{\xx_{i}}{i<\lambda^{+}}$ is a set of pairwise non-isomorphic
elements of $\Ff$ with $M_{\xx_{i}}=M$ for all $i$. We may assume
that for some $\beta,\gamma<\lambda$ and all $i<\lambda^{+}$, $\bar{c}_{\xx_{i}}$
is of length $\beta$ and $\bar{d}_{\xx_{i}}$ is of length $\gamma$.
We may also assume, as $\lambda^{<\lambda}=\lambda$, that $B_{\xx_{i}}=B$
for all $i<\lambda^{+}$. Let $L'$ be the common language of the
structures $M_{\left[\xx_{i}\right]}$ (which we may assume is constant
as it only depends on the length of $\bar{c}_{\xx_{i}}$, $\bar{d}_{\xx_{i}}$
and $B_{\xx_{i}}$). Let $D_{i}=D\left(M_{\left[\xx_{i}\right]}\right)$
in the language $L'$ (recall that $D\left(A\right)$ consists of
all types of finite tuples from $A$ over $\emptyset$). The language
$L'$ has size $<\lambda$, so the number of possible $D$'s is $\leq2^{2^{\left|L'\right|}}<\lambda$,
so we may assume that $D_{i}=D_{0}$ for all $i<\lambda$ (it follows
that $M_{\xx_{i}}\equiv M_{\xx_{0}}$). Finally, we are done by Lemma
\ref{lem:Grossberg}, Corollary \ref{cor:homo-isom} and Theorem \ref{thm:externally definable sets, homogeneous}. \end{proof}
\begin{rem}
One can also prove Corollary \ref{cor:counting up to isomorphism}
directly, showing that the number of $\lambda$-decompositions in
$\Ff$ up to pseudo isomorphism is $\leq\lambda$, and then use Proposition
\ref{prop:pseudo isomorphism implies weak isomorphism}.
\end{rem}
Finally, we have proved that $\Ff$ is a good family of $\lambda$-decompositions,
so by Theorem \ref{thm:generic pair with good family} we get:
\begin{cor}
\label{cor:main}Conjecture \ref{conj:The generic pair conjecture},
and the conclusion of Theorem \ref{thm:clever counting of types}
hold when $\lambda$ is measurable.\end{cor}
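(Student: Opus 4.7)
The proof is essentially an assembly of the work done throughout Section \ref{sec:Finding-a-good}. The plan is to verify, clause by clause, that the family $\Ff$ of $\lambda$-self-solvable decompositions (over $D$-saturated models of cardinality $\lambda$) satisfies all nine requirements of Definition \ref{def:good family}, and then invoke the two abstract consequences already proved for any good family, namely Theorem \ref{thm:generic pair with good family} and Theorem \ref{thm:clever counting of types}.

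First I would collect the clauses that have already been verified in the course of the section. Clauses (\ref{enu:invariant under isom}), (\ref{enu:every model is saturated}), (\ref{enu:non-empty}), (\ref{enu:enbase}) and (\ref{enu:iso-extension}) are immediate from the definitions of $\Ff$ and of self-solvable decomposition. Clause (\ref{enu:enlarging}) is exactly the content of Corollary \ref{cor:existsence of self-solvable}, which uses the measurability of $\lambda$ via a normal ultrafilter and the iterated construction of Proposition \ref{prop:finding self solvable by chasing tail}. Clause (\ref{enu:union}), closure under increasing unions of length $<\lambda$, is Theorem \ref{thm:union}. Clause (\ref{enu:isohomo}), the coherence of weak isomorphisms under unions, was established via Proposition \ref{prop:pseudo isomorphism implies weak isomorphism} together with the corollary that immediately follows it, where weak isomorphisms are identified with pseudo isomorphisms. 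Finally, clause (\ref{enu:count}), the bound of $\lambda$ on the number of elements of $\Ff$ over a fixed $M$ up to isomorphism, is Corollary \ref{cor:counting up to isomorphism}.

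With all nine clauses in hand, $\Ff$ is a good family of $\lambda$-decompositions under our standing hypothesis that $\lambda>\theta$ is measurable (so in particular $\lambda=\lambda^{<\lambda}$, as needed for the framework of Section \ref{sec:The-generic-pair}). Applying Theorem \ref{thm:generic pair with good family} to $\Ff$ yields the desired club $E=\Esat\cap\Ebnf\cap\Ecom$ on which all pairs $(M_{\beta},M_{\alpha})$ of cofinality $\lambda$ are isomorphic, which is exactly Conjecture \ref{conj:The generic pair conjecture} in the measurable case. Applying Theorem \ref{thm:clever counting of types} to the same family $\Ff$ yields the conclusion that for any $D$-saturated $M$ of size $\lambda$, the number of types in $S_{D}^{<\lambda}(M)$ up to conjugation is $\leq\lambda$.

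There is no genuine obstacle at this stage, as the corollary is a pure bookkeeping step once Section \ref{sec:Finding-a-good} has been completed; all the real work sat in Corollary \ref{cor:existsence of self-solvable} (which needed the normal ultrafilter on $\lambda$) and in Theorem \ref{thm:union} (which needed dependence, through Lemma \ref{lem:average type on indiscernible sequence}, to build the average type $q=\bigcup_{j<\delta}q_j$ used to fuse the tails of the approximating indiscernible sequences).
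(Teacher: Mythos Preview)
Your proposal is correct and follows exactly the paper's approach: the corollary is stated immediately after the sentence ``Finally, we have proved that $\Ff$ is a good family of $\lambda$-decompositions, so by Theorem \ref{thm:generic pair with good family} we get,'' and carries no further proof. Your clause-by-clause recapitulation (with (\ref{enu:enlarging}) from Corollary \ref{cor:existsence of self-solvable}, (\ref{enu:union}) from Theorem \ref{thm:union}, (\ref{enu:isohomo}) from the corollary after Proposition \ref{prop:pseudo isomorphism implies weak isomorphism}, and (\ref{enu:count}) from Corollary \ref{cor:counting up to isomorphism}) is precisely what the paper accumulates through Section \ref{sec:Finding-a-good}, and the concluding appeals to Theorems \ref{thm:generic pair with good family} and \ref{thm:clever counting of types} are exactly right.
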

\begin{problem}
To what extent can we generalize \cite[Theorem 7.3]{Sh950} to dependent
finite diagrams? For instance, is the generic pair conjecture for
dependent finite diagrams also true when $\lambda$ is weakly compact? 
\end{problem}
\bibliographystyle{alpha}
\bibliography{common2}

\end{document}